\newcommand{\RNum}[1]{\expandafter{\romannumeral #1\relax}}
\numberwithin{equation}{section}
\newcommand\asertion[1]{ssertion $({\mathrm{\romannumeral #1\relax}})$}
\newcommand\infer[2]{$(\mathrm{\romannumeral #1\relax})\implies (\mathrm{\romannumeral #2\relax})$}
\newcommand{\algmod}{algebraic module}
\newcommand{\algbmod}{algebraic bimodule}
\newcommand{\ass}{associator}
\newcommand{\asselm}{associative element}
\newcommand{\decomp}{cyclic decomposition}
\newcommand{\tezh}{characteristic vector}
\newcommand{\tezzh}{characteristic value}\newcommand{\Hom}{\text{Hom}}
\newcommand{\End}{\text{End}}
\newcommand\conjgt[1]{\overline{#1}}
\newcommand{\bfs}{Without loss of generality we can assume}
\def\O{\mathbb{O}}
\def\S{\mathbb{S}}
\def\R{\mathbb{R}}
\def\C{C\ell_7}
\newcommand\clifd[1]{C\ell_{#1}}
\newcommand{\spf}{\mathbb{F}}
\newcommand{\spc}{\mathbb{C}}
\newcommand{\spr}{\mathbb{R}}
\newcommand{\spo}{\mathbb{O}}
\newcommand\huaa[1]{\mathscr{A}(#1)}
\newcommand\hua[3]{\mathscr{#1}^{#2}(#3)}
\newcommand\huac[1]{\mathscr{C}(#1)}
\newcommand\huacc[2]{\mathscr{C}^{#1}(#2)}
\newcommand\pureim[1]{\mathit{Im}(#1)}
\newcommand{\re}{\mathit{Re}\,}%
\newcommand{\vre}{\mathit{Re}\,}%
\newcommand{\imaginary}{\mathit{Im}\,}
\newcommand\fx[2]{\left<#1,#2\right>}%
\newcommand\dbb[3]{\lfloor#1,#2,#3\rfloor }
\newcommand\generat[1]{\left\langle #1\right\rangle}%
\newtheorem{mydef}{Definition}[section]
\newtheorem{rem}[mydef]{Remark}
\newtheorem{eg}[mydef]{Example}
\newtheorem{cor}[mydef]{Corollary}
\newtheorem{prop}[mydef]{Proposition}
\newtheorem{lemma}[mydef]{Lemma}
\newtheorem{thm}[mydef]{Theorem}
\newtheorem{step }[stp]{Step }
\begin{document}
\title{ Octonionic bimodule} 
\author{Qinghai Huo, Guangbin Ren} 
\date{} 
\maketitle 
\begin{abstract}
The structure of octonionic bimodules is formulated in this paper. It turns out that every octonionic bimodule is a tensor product,  the category of octonionic bimodules is isomorphic to the  category of real vector spaces.  We show that there is also a real part structure on  octonionic bimodules similar  to the quaternion case. 
Different from the quaternion setting , the octonionic bimodule sturcture is uniquely determined by its left module structure and hence the real part can be obtained only by left multiplication. 
The structure of  octonionic submodules generated by one element is more involved, which leads to many obstacles to further development of the octonionic functional analysis. 
We introduce a notion of \decomp\ to deal with this difficulty. Using this concept, we   give a complete description of the submodule generated by one element in octonionic bimodules. This paper clears the  barrier of the structure of $\O$-modules for the later study of  octonionic functional analysis. 
\end{abstract}
\noindent{\bf Keywords:}
Octonionic bimodule; real part; \decomp.

\noindent{\bf AMS Subject Classifications:}
17A05 46S10
\tableofcontents
\section{introduction}

Theory of quaternion Hilbert spaces  has been studied a lot (\cite{horwitz1993QHilbertmod,razon1992Uniqueness,razon1991projection,soffer1983quaternion,viswanath1971normal}). The Theory of spherical spectrum of normal operator, continuous slice functional caculus in quaternionic Hilbert spaces had  been also established  (\cite{ghiloni2013slicefct}). The quaternionic vector sapces have also been studied 
thoroughly (\cite{ng2007quaternionic}), which makes it well-grounded to do further study on the quaternionic functional analysis. However, in the octonion case, the structure of the one-sided modules and bimodules over octonion are not completely clear, this leads to many obstacles to further development of the octonionic functional analysis, although there are also some results on the study of octonion Hilbert spaces (\cite{goldstine1964hilbert,goldstine1964hilbert2,ludkovsky2007algebras,ludkovsky2007Spectral}). 
 Consequently, it is worth  discussing the structure of  $\O$-modules.
 
  The general case of  the structures of bimodules over Jordan algebra and alternative algebra has been  studied by Jacobson in \cite{jacobson1954structure}.  The left-alternative left-modules for the real algebra of octonions have been considered, and  the irreducible ones are known to be isomorphic to the regular or conjugate regular modules (See, for example, Chapter 11 of the monograph by Zhevlakov, Slinko, Shirshov, and Shestakov \cite{zhevlakov1982Rings}).  And we point out that the $\O$-vector space studied in \cite{ludkovsky2007algebras} is not an $\O$-bimodule under  the definition given in \cite{jacobson1954structure}. It is actually a left $\O$-module with an irrelevant right $\O$-module structure. And it seems no proof for the structure of such $\O$-vector spaces being a tensor product, which has been used  several times. 
 
In quaternion case,  Ng  gives a systematic study in \cite{ng2007quaternionic}, which shows the category of quaternion vector spaces, that is, quaternion bimodules, is equivalent to the category of real vector spaces. More precisely, there is a natural sturcture of real part on each quaternion bimodule, which is the corresponding real vector space. A natural question is whether  similar results hold in octonion case.
In previous work \cite{liyong2019octonionmodule}, we have formulated  the structures  of left $\O$-modules. It shows that there is an isomorphism
between the category $O$-$\mathbf{Mod}$  and the category $\C$-$\mathbf{Mod}$, here the object in $O$-$\mathbf{Mod}$ is left $\O$-module. Each    left $\O$-module $M$ will be of the form   $$M=\spo\huaa{M}\oplus {\spo}\hua{A}{-}{M}.$$ 
Where $\huaa{M}$ and $\hua{A}{-}{M}$ represent  the subset of {associative elements} and {conjugate associative elements} respectively:
$$\huaa{M}:=\{m\in M\mid [p,q,m]=0,\ \forall p,q \in \O\};$$
and
$$\hua{A}{-}{M}:=\{m\in M\mid (pq)m=q(pm),\forall p,q\in \spo\}.$$
 It is therefore  natural to ask whether a given left $\O$-module  admits a compatible bimodule structure, and if so, is it unique? if not, what is the condition for a left $\O$-module to admit a compatible bimodule structure. In this paper, we direct ourselves to answering these questions.

In this paper, we show that the necessary and sufficient condition for a left $\O$-module  admiting a compatible bimodule structure is just the vanishing of the subset of conjugate associative elements.
 And if so,  the bimodule structure is then uniquely determined by its left multiplication.
  More precisely, we obtain:
\begin{thm}
A left $\O$-module $M$ admits a compatible $\spo$-bimodule structure if and only if it holds $M=\O \huaa{M}$. 
	
	Moreover, if $M$ admits an $\spo$-bimodule structure, then it is unique.
\end{thm}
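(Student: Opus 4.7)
The statement splits into three assertions—sufficiency, uniqueness, and necessity—and the common structural fact behind all three is that in any alternative bimodule the associator $[x,y,z]$ is fully skew-symmetric under transpositions of its three arguments, each lying in $\spo\cup M$. This is the standard consequence of alternativity of the split null extension $\spo\oplus M$, and it is what lets us move $m$ freely between the three associator slots.

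For sufficiency, assume $M=\spo\huaa{M}$. The plan is to set $(pm)\cdot q:=(pq)m$ for $m\in\huaa{M}$ and $p,q\in\spo$, and extend $\spr$-bilinearly. The first task is well-definedness: whenever $\sum_i p_i m_i=0$ with $m_i\in\huaa{M}$, one must check $\sum_i(p_iq)m_i=0$. This follows from the identification of $\spo\huaa{M}$ with $\spo\otimes_\spr\huaa{M}$ as a left module, which is part of the structure theorem of \cite{liyong2019octonionmodule}. Once the right action is in place, verifying the defining identities of an octonionic bimodule reduces, factor by factor, to alternativity in $\spo$ together with the associativity relation $p(qm_i)=(pq)m_i$ for $m_i\in\huaa{M}$.

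For uniqueness, suppose $M$ carries any bimodule structure and take $m\in\huaa{M}$. The alternating associator propagates $[p,q,m]=0$ into $[p,m,q]=0$ and $[m,p,q]=0$. These translate into $(pm)\cdot q=p(m\cdot q)$ and $(m\cdot p)\cdot q=m\cdot(pq)$. Showing next that the cyclic left submodule $\spo m$ is stable under the right action identifies it with the regular bimodule $\spo$ (whose bimodule structure is itself unique by the same alternating-associator argument), so that $m\cdot q=qm$ on $\spo m$, and consequently $(pm)\cdot q=p(qm)=(pq)m$ on all of $M=\spo\huaa{M}$.

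For necessity, suppose $M$ admits a compatible bimodule structure and take $m\in\hua{A}{-}{M}$. Direct computation using $(pq)m=q(pm)$ yields $[p,q,m]=(pq-qp)m$, and alternativity of the associator then forces $[m,p,q]=(pq-qp)m$ and $[p,m,q]=-(pq-qp)m$. Substituting these into $[m,p,q]=(m\cdot p)\cdot q-m\cdot(pq)$ and $[p,m,q]=(pm)\cdot q-p(m\cdot q)$, and combining with flexibility and the Moufang identity for the split null extension $\spo\oplus M$, the plan is to deduce $(pq-qp)m=0$ for all $p,q\in\spo$. Specialising to two anti-commuting imaginary units then gives $rm=0$ for every $r\in\imaginary\spo$, hence $m=0$ and $\hua{A}{-}{M}=0$. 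The main obstacle is precisely this step: the conjugate-regular left module structure on $\spo$ satisfies every left-alternative axiom, so the obstruction to a bimodule extension cannot be detected by left-only identities and must be extracted from the coupled left-right axioms. I expect the cleanest route to reduce to the cyclic case $M=\spo m$ with $m$ conjugate-associative, where the Moufang identity (equivalently, alternativity of $\spo\oplus\spo m$) supplies the nonlinear relation needed to force the contradiction.
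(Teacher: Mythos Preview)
Your plan for sufficiency is sound and is essentially what the paper does implicitly: once $M=\O\huaa{M}\cong\O\otimes_\spr\huaa{M}$ as a left module, defining $(pm)\cdot q=(pq)m$ on associative generators and extending $\spr$-bilinearly gives the regular bimodule structure, and the bimodule identities reduce to alternativity in $\O$.

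The uniqueness argument, however, has a real gap. From $m\in\huaa{M}$ you correctly deduce $[p,m,q]=[m,p,q]=0$, hence $(pm)\cdot q=p(m\cdot q)$ and $(m\cdot p)\cdot q=m\cdot(pq)$. But neither of these forces $m\cdot q\in\O m$; the right action could a priori move $m$ into other associative directions. Your claim that ``$\O m$ is stable under the right action'' is exactly the nontrivial step, and you have not indicated how to prove it. In fact, even for a single copy $M=\O$ the uniqueness of the bimodule structure is not a formal consequence of the alternating associator: one gets that $R_p(x)=x\tilde p$ for some $\tilde p\in\O$ (this is the paper's Lemma~\ref{lem:almost linear}), then that $p\mapsto\tilde p$ lies in $G_2$, and finally a separate argument is needed to rule out a nontrivial automorphism. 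The paper handles the general case by writing the right action in coordinates relative to a basis of $\huaa{M}$, extracting functional equations, and showing first that the off-diagonal pieces vanish (via a $\spc_J$ contradiction: if some off-diagonal coefficient is nonzero, one forces $[p,q,J]\in\spc_J$ for all $p,q$, which is false) and then that the diagonal pieces are the identity.

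For necessity you concede the gap yourself. Your reduction to the cyclic case $M=\overline{\O}$ is correct in spirit, and the contradiction does come from the coupled left--right axioms, but ``the Moufang identity supplies the nonlinear relation'' is not a proof. The paper's argument (Lemma~\ref{lem:simple bimod is O}) is concrete: from $[p,q,x]_{\overline{\O}}=[q,x,p]_{\overline{\O}}$ one derives $R_p(q)=qR_p(1)+[q,\overline p]$, then substitutes back to obtain $[R_p(1)-2p,q,x]=0$ for all $q,x$, so $R_p(1)-2p\in\spr$; finally, comparing $R_{e_1}(R_{e_1}1)$ computed two ways (once via the derived formula, once as $R_{e_1^2}1=-1$) yields an equation in $\spr$ with no solution. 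You would need to supply this computation or an equivalent one; the alternating-associator bookkeeping alone does not close the argument.
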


There is also a  structure of real part on $\O$-bimodules as in quaternion case:
$$\re x=\frac{5}{12}x-\frac{1}{12}\sum_{i=1}^7 e_ixe_i.$$ 
And we can rewrite this formula in terms of left multiplication:
$$ \re x=x+\dfrac{1}{48}\sum _{i,j,k}\epsilon_{ijk}e_i[e_j,e_k,x],$$
where the symbol $\epsilon_{ijk}$ depends on the multiplication table of the octonions.
Using this one easily obtains that an $\O$-bimodule $M$ is isomorphic to the  tensor product $\re M\otimes \O$, coherent with the quaternion case. Therefore, we get that the category of $\O$-bimodules is isomorphic to the category of $\R$-vector spaces.


 Our last topic is about the structure of submodules generated by one element. 
 In contrast to the complex or quaternion setting,   some new phenomena occur in the setting of octonions, which has already  been known in \cite{goldstine1964hilbert}: If $m$ is an  element of an octonionic module, then
 
 $\bullet$   $\O m$ is not an submodule in general. 
 
 $\bullet$   The  submodule generated by $m$ maybe the whole   module.
 
 This means that the structure of  octonionic submodules is more involved and such property is crucial for classical functional analysis.  We point out that some gaps appear  in establishing the octonionic version of Hahn-Banach  Theorem by taking $\O m$ as a submodule   (\cite[Lemma 2.4.2]{ludkovsky2007algebras}).
 The submodule  generated by a submodule $Y$ and a point $x$ is not of the form $\{y+px\mid y\in Y,\;  p\in \O\}$,  this is wrong even for the case $Y=\{0\}$. It means the  proof can not repeat the way in canonical case. The involved  structure of  octonion submodules  accounts for the slow developments of octonion Hilbert spaces.
  We shall give a new proof in a later paper.

 This phenomena motivates us to introduce a  new notion of  cyclic elements, which play  a key role in the study of  submodules.
An element $m$ in a given  module $M$ is called 
\textbf{cyclic elements} if the submodule generated by it is  exactly $\O m$.
We next introduce a notion of \decomp\ to describe the structure of these submodules generated by one element. 
It turns out that each element can be decomposed into a sum of some special cyclic elements.
More precisely, we obtain:
\begin{thm}
	Let $m$ be an arbitrary element of an $\O$-bimodule $M$. Then  $$\generat{m}_\O=\bigoplus_{i=1}^n\O m_i,$$
where  $\{m_i\}_{i=1}^n\subseteq \huac{M}$  is an arbitrary \decomp\ of $m$.

\end{thm}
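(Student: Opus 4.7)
The plan is to prove the equality via two containments followed by a directness check, relying on two properties that a \decomp\ of $m$ should carry by its very definition: (i) each $m_i$ is a \cirelm, so that $\O m_i=\generat{m_i}_\O$ is automatically an $\O$-submodule of $M$; and (ii) each $m_i$ belongs to $\generat{m}_\O$.

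For the inclusion $\generat{m}_\O\subseteq\sum_{i=1}^n\O m_i$, property (i) makes the right-hand side a finite sum of $\O$-submodules and hence itself an $\O$-submodule; since it contains $m=\sum_i m_i$, it must also contain the submodule $\generat{m}_\O$ generated by $m$. Conversely, (ii) gives $\O m_i\subseteq\generat{m}_\O$ for every $i$, so the total sum is contained in $\generat{m}_\O$. These two containments together yield the set-theoretic equality, leaving only the directness of the sum to be addressed.

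The main work lies in proving directness: a relation $\sum_{i=1}^n p_i m_i=0$ with $p_i\in\O$ must force each $p_i m_i=0$. Here I would exploit the structural theorem $M=\O\huaa{M}$ stated earlier in the excerpt together with the real-part formula $\re x=x+\tfrac{1}{48}\sum_{i,j,k}\epsilon_{ijk}e_i[e_j,e_k,x]$. Applying the operator $x\mapsto\re(e_r x)$ to the supposed relation for each basis unit $e_r\in\{1,e_1,\dots,e_7\}$ produces a family of real linear relations among the elements $\re(e_r m_i)$. The notion of \decomp\ is presumably crafted so that distinct \cirelm\ components $m_i$ land in $\R$-linearly independent portions of $\re M$, tagged by their characteristic data, and this independence collapses the system to $p_i m_i=0$ termwise.

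The hardest step, and the one most dependent on the internal development of the paper, is precisely this independence claim. Reducing directness to real linear algebra via the real-part projector is essentially routine once the bimodule structure theorem and the explicit projector formula are available, but verifying the independence of the images of the $m_i$ under these projectors requires the full classification of \cirelm s and the precise definition of \decomp\ — material I assume is developed in earlier sections of the paper before this theorem is proved.
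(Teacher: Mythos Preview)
Your proposal misidentifies where the difficulty lies. You assume as property (ii) that each $m_i$ belongs to $\generat{m}_\O$ ``by its very definition,'' but this is \emph{not} part of the paper's definition of a \decomp. A \decomp\ of $m$ is a way of writing $m=\sum_{i=1}^n r_ix_i$ with cyclic summands $m_i=r_ix_i$ such that the characteristic vectors $\{x_i\}\subseteq\huaa{M}$ are $\R$-linearly independent and the characteristic values $\{r_i\}\subseteq\O$ are $\R$-linearly independent. Nothing here guarantees a priori that an individual $m_i$ lies in the submodule generated by $m$; proving that it does is precisely the substance of the theorem. The paper establishes $\O m_i\subseteq\generat{m}_\O$ by induction on $n$: after normalizing so that $r_1=1$ and $r_2,\dots,r_n\in\pureim{\O}$, one applies associators of the form $[r_j,r_k,m]$ or $[r_j,\alpha,m]$ --- which automatically lie in $\generat{m}_\O$ --- to annihilate some summands and reduce to shorter decompositions, with separate and increasingly intricate arguments for $n=3,4,5$ and $n>5$ depending on how the $r_i$ sit relative to associative $3$-planes in $\pureim{\O}$.

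Conversely, what you call the ``main work'' --- directness of the sum $\sum_i\O m_i$ --- is essentially immediate from the definition. Since each $x_i\in\huaa{M}$ and $r_i\neq 0$, one has $\O m_i=\O(r_ix_i)=\O x_i$, and the lemma recorded earlier in the paper (an $\R$-linearly independent set of associative elements is automatically $\O$-linearly independent) gives directness at once. Your real-part projector idea is unnecessary for this step and, more importantly, does nothing to address the genuine obstacle, which is the reverse inclusion you have taken for granted.
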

The length of a \decomp\ is therefore an invariant  of $m$ and by definition  at most $8$,  hence there are only 
$8$ kinds of elements in $\O$-bimodules and each element $m$ will generate a submodule with dimension $\dim_\R \generat{m}_\O \leqslant 64$. 

By the way, we point out a  mistake in \cite{ludkovsky2007algebras}. 
It appears in the  proof of the corollary of Hahn Banach Theorem (\cite[Lemma 2.4.2]{ludkovsky2007algebras}),   
which declared every element in an $\O$-module will satisfy $\O x=x\O$. In fact, with the help of the notion of \decomp,  we shall show that only cyclic elements posses such property.  
\section{Pcreliminaries}
In this section, we review some basic  properties of the algebra of the octonions   $\spo$ and
one-sided $\O$-modules, and introduce some fundamental notations.

\subsection{The octonions $\spo$}

The  algebra of the octonions   $\spo$  is  a non-associative, non-commutative, normed division algebra over the $\spr$. Let     $e_1,\ldots,e_7$ be its natural  basis throughout this paper, i.e., $$e_ie_j+e_je_i=-2\delta_{ij},\quad i,j=1,\ldots,7.$$ For convenience, we  denote  $ e_0=1$.

In terms of the natural basis, an element in octonions can be written as $$x=x_0+\sum_{i=1}^7x_ie_i,\quad x_i\in\spr,$$
The conjugate octonion of $x$ is defined by  $\overline{x}=x_0-\sum_{i=1}^7x_ie_i$, and the norm of $x$ equals $|x|=\sqrt{x\overline{x}}\in \spr$, the real part of $x$ is $\re{x}=x_0=\frac{1}{2}(x+\overline{x})$. \textbf{The term  $\sum_{i=1}^7x_ie_i$ will be  abbreviated as $\sum x_ie_i$ in this paper.} We denote by $\S$ the set of imaginary units in $\O$:
$$\S:=\{J\in \O\mid J^2=-1\}.$$
Then there is a book structure on octonions:$$\O=\bigcup_{J\in \S}\spc_J,$$
here $\spc_J$ represents the complex plane spaned by $\{1,J\}$.


%

The associator of three octonions is defined as $$[x,y,z]=(xy)z-x(yz)$$
for any   $x,y,z\in\spo$, which is alternative in its arguments and has no real part. That is, $\spo$ is an alternative algebra and hence it satisfies the so-called R. Monfang identities \cite{schafer2017introduction}:
$$(xyx)z=x(y(xz)),\ z(xyx)=((zx)y)x,\ x(yz)x=(xy)(zx).$$ The commutator is defined as $$[x,y]=xy-yx.$$
One can prove that, for any $J\in \S$ and   $x\in\spc_J\setminus \spr $, we have 
$$\{p\in \O\mid [p,x]=0   \}=\spc_J.$$

The full multiplication table is conveniently encoded in the 7-point projective plane, which is often called the Fano mnemonic graph.
In the Fano mnemonic graph, the vertices are labeled by  $1, \ldots, 7$
instead of $e_1,\ldots, e_7$. Each of the 7 oriented lines gives a quaternionic triple. The
product of any two imaginary units is given by the third unit on the unique line
connecting them, with the sign determined by the relative orientation.

\

\noindent \small{\textbf{Fig.1} Fano mnemonic graph}
\begin{flushright}
\centerline{\includegraphics[width=4cm]{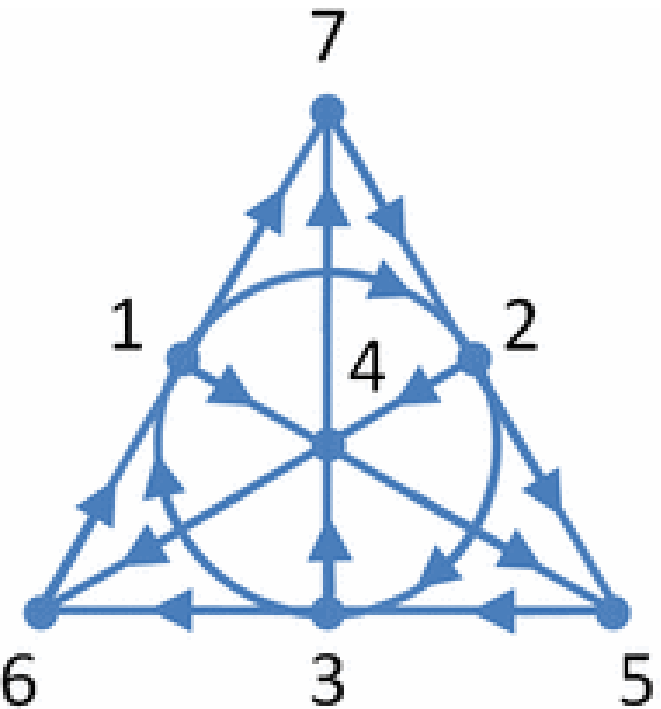}}
\end{flushright}
\normalsize

\

It will be convenient to use an $\epsilon$-notation that will now be introduced 
(see \cite{bryant2003some}). This is the unique symbol that is skew-symmetric in either three or four indices.
One way to think of this symbol is:
\begin{align}
&e_ie_j=\epsilon_{ijk}e_k-\delta_{ij}\\
&[e_i,e_j,e_k]=2\epsilon_{ijkl}e_l\label{eq:e(ijkl)el=[ei,ej,ek]}
\end{align}
The symbol $\epsilon$ satisfies various useful identities. For example (using the summation
convention),
\begin{align}
\epsilon_{ijk}\epsilon_{ijl}&=6\delta_{kl}\label{eq:ep(ijk)ep(ijl)=6delta(kl)}\\
\epsilon_{ijq}\epsilon_{ijkl}&=4\epsilon_{qkl}\label{eq:contract id eijq eijkl=4eqkl}\\
\epsilon_{ipq}\epsilon_{ijk}&=\epsilon_{pqjk}+\delta_{pj}\delta_{qk}-\delta_{pk}\delta_{qj}\\
\epsilon_{ipq}\epsilon_{ijkl}&=\delta_{pj}\epsilon_{qkl}-\delta_{jq}\epsilon_{pkl}+\delta_{pk}\epsilon_{jql}-\delta_{kq}\epsilon_{jpl}+\delta_{pl}\epsilon_{jkq}-\delta_{lq}\epsilon_{jkp}
\end{align}
We shall always  use the  Einstein summation convention when we compute in terms of $\epsilon$-notation.

\

The $14$-dimensional group $G_2$ is the smallest of the five exceptional Lie groups and is closely related to the octonions. In particular, $G_2$ can be defined as the automorphism group of the octonion algebra:
$$G_2:=\{g\in GL(\O)\mid g(xy)=g(x)g(y) \text{ for all } x,y\in \O   \}.$$  As a fact, $G_2\subseteq {SO}(7)$. We refer to \cite{harvey1990spinors,salamon2017notesonoctonion} for more details. 

%

We can now use octonion multiplication to define a vector cross product $\times$ on $\R^7$ (\cite{schafer2017introduction}). Given $u,v\in \R^7$, we regard them as elements in $\pureim{\O}$, then $$u\times v:=\imaginary (uv).$$  A $3$–dimensional subspace $\Lambda\subset\pureim{\O}$ is called \textbf{associative} if the associator bracket vanishes on $\Lambda$, i.e.,
$$[u, v, w] = 0 \text{ for all } u, v, w\in \Lambda. $$ 
	If $u, v\in \Lambda$ are linearly independent, then the subspace spanned by the
vectors $u, v, u \times v$ is associative (see \cite{schafer2017introduction}). This subspace will be denoted by $\Lambda(u,v)$. By definition, it is easy to verify: 
$$\Lambda(u,v)=\{x\in \pureim{\O}\mid [u,v,x]=0  \}.$$
\subsection{The definition of $\O$-modules}
It's well known that  the algebra of octonions  $\spo$ is an   alternative algebra, thus that  an $\spo$-module $M$ 
is actually  an alternative module. From now on let $A$ be a unital alternative algebra over a field $\spf$.
 For the sake of completeness, we give the  definition of modules over alternative algebra as follows.

\begin{mydef}\label{def: 1 alternative algebra}

	An $\spf$-vector space $M$ is called a \textbf{left alternative module} over $A$, if there is an  $\spf$-linear map $$L:  A\rightarrow \End_{\spf}M,\quad a\mapsto L_a $$ satisfying  $L_1=id_M$ and 
	$$[a,b,x]=-[b,a,x], \quad \forall a,b\in A, \; x\in M.$$
	Here the \ass\ is defined by $[a,b,x]:=(ab)x-a(bx)$.
	The definition of  right alternative \algmod\ is similar.		
	A left alternative   $A$-module $M$ is called an  \textbf{alternative bimodule} if  the  \ass\ is alternative:
	$$[p,q,m]=[m,p,q]=[q,m,p], $$ 
	$\text{ for all } p,q\in A,  \text{ and for all }m\in M$.
	Where the \textbf{middle \ass} $[q,m,p]$ is defined by $$[q,m,p]:=(qm)p-q(mp),$$
	and the \textbf{right \ass} $[p,q,m]$ is defined by $$[p,q,m]:=(pq)m-p(qm).$$

\end{mydef}
	\begin{rem}\label{rem:def of alg-mod}
	Let $M$ be an $A$-module.	For all $m,m'\in M$,  $\alpha,\beta\in \spf$ and all $a,a'\in A$, we have: 
	\begin{enumerate}
		\item $L_a\in \End_{\spf}M \Rightarrow a(\alpha m+\beta m')=\alpha (am)+\beta (am')$.
		In particular, $ a(\alpha m)=\alpha( am)$.
		\item $L\in \Hom_{\spf}(A,\End_{\spf}M )\Rightarrow (\alpha a+\beta a')m=\alpha am+\beta a'm $.
		
		In particular, $ (a\alpha) m=(\alpha a)m=\alpha (am)= a(\alpha m)$ and thus we can  write $ a\alpha m$ unambiguously. 
		\item Thinking of $M$ as an $\spf$-vector space, the scalar multiplication over $\spf$  coincides with $L\vert_\spf $  since $L(1)=id_M$.
		
	\end{enumerate}
	
\end{rem}
Note that 
the left alternativity requirement of the \ass\ in $M$ is  equivalent to the following condition given in \cite{goldstine1964hilbert,ludkovsky2007algebras}
$$a^2m=a(am), \text{ for all } a\in A, \,m\in M.$$The proof is trivial by polarizing the above relation. And the notion of $A$-bimodules here  agrees with the defintion given in \cite{jacobson1954structure}. However, the following postulates:
$$1m=m1=m,\qquad a^2m=a(am),\quad ma^2=(ma)a,\quad (am)a=a(ma)$$
   can not   deduce that $M$ is an alternative bimodule in general.

%

%

\

One useful identity  which holds in any left module $M$ is 
\begin{equation}\label{eq:[p,q,r]m+p[q,r,m]=[pq,r,m]-[p,qr,m]+[p,q,rm]}
[p,q,r]m+p[q,r,m]=[pq,r,m]-[p,qr,m]+[p,q,rm].
\end{equation}
Here $m\in M$ is an arbitrary element and it holds for all $p,q,r\in A$.

The Moufang identities and Artin Theorem hold as before.
\begin{thm}[\textbf{Moufang identities}]
	Let $M$ be an alternative \algbmod\ over $A$. Then for all $p,q\in A,m\in M$, the Moufang identities hold:
	\begin{gather}
	(pmp)q=p(m(pq))\\
	q(pmp)=((qp)m)p\label{eq:moufang2 q(pmp)=((qp)m)p}\\
	(pq)(mp)=p(qm)p.
	\end{gather}
\end{thm}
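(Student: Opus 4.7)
The plan is to use the split null extension trick: introduce the algebra $\tilde A := A \oplus M$ with multiplication $(a,m)(a',m') := (aa',\, am'+ma')$, verify that $\tilde A$ is alternative, and then apply the classical Moufang identities in $\tilde A$ to recover the three bimodule identities by projection onto the $M$-component.

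A direct computation shows that the associator in $\tilde A$ has $A$-component $[a,b,c]$ and $M$-component $[a,b,p]+[a,n,c]+[m,b,c]$. The alternative condition $[(a,m),(a,m),(c,p)]_{\tilde A}=0$ therefore reduces to $[a,a,p]+[a,m,c]+[m,a,c]=0$; here $[a,a,p]=0$ by left alternativity in $M$, and $[a,m,c]+[m,a,c]=0$ follows from the cyclic relation $[m,a,c]=[c,m,a]$ (Definition 2.1) combined with the middle alternativity $[a,m,c]=-[c,m,a]$. The symmetric identity $[(c,p),(a,m),(a,m)]_{\tilde A}=0$ is verified analogously, with $[p,a,a]=0$ by right alternativity and $[c,a,m]+[c,m,a]=0$ by the cyclic relation together with middle alternativity. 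Hence $\tilde A$ is an alternative algebra, and the classical Moufang identities hold in it.

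Substituting $x=(p,0)$, $y=(0,m)$, $z=(q,0)$ into $(xyx)z=x(y(xz))$ gives $(pmp)q=p(m(pq))$ on the $M$-component; the same substitution in $z(xyx)=((zx)y)x$ yields $q(pmp)=((qp)m)p$; and the substitution $x=(p,0),\,y=(q,0),\,z=(0,m)$ in $(xy)(zx)=x(yz)x$ yields $(pq)(mp)=p(qm)p$. The main obstacle is the alternativity verification in the previous step, where the three separate alternating relations on the bimodule associator (left, middle, and right) must combine correctly to make the three-term $M$-component of the $\tilde A$-associator fully alternating. A direct proof via the Teichm\"uller identity (2.3) and the alternating properties is also feasible, but it requires more delicate sign bookkeeping across several specializations of (2.3) at repeated arguments such as $(p,q,p)$ and $(q,p,p)$; the split null extension packages all of this into a single clean reduction to the known Moufang identities on an ordinary alternative algebra.
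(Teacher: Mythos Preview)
Your proof is correct and is a genuinely different approach from the paper's. The paper gives a direct computation for the first identity only, manipulating associators step by step:
\begin{align*}
(pmp)q-p(m(pq))&=[pm,p,q]+[p,m,pq]=-[p,pm,q]-[p,pq,m]\\
&=p([p,m,q]+[p,q,m])=0,
\end{align*}
and then remarks that the remaining two identities follow in the same manner as in the classical case.

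Your split null extension argument is in fact the conceptual explanation behind why such a direct computation must succeed: the bimodule axioms in Definition~2.1 are exactly the conditions that force $\tilde A = A\oplus M$ to be alternative (this is Jacobson's original formulation, which the paper cites). Your approach packages all three Moufang identities into a single reduction and avoids the line-by-line associator bookkeeping; the paper's direct calculation, on the other hand, is self-contained and does not require recalling the classical Moufang proof. Both are standard; yours is arguably cleaner once the alternativity of $\tilde A$ is checked, which you have done correctly (in particular your verification that $[a,m,c]+[m,a,c]=0$ and $[c,a,m]+[c,m,a]=0$ via the cyclic relation combined with left alternativity is sound).
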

\begin{proof}
	The proof  is similar as in  classical case. 
	We only prove the first identity.
	\begin{align*}
	(pmp)q-p(m(pq))&=[pm,p,q]+(pm)(pq)-p(m(pq))\\
	&=[pm,p,q]+[p,m,pq]\\
	&=-[p,pm,q]-[p,pq,m]\\
	&=-(p^2m)q+p((pm)q)-(p^2q)m+p((pq)m)\\
	&=-p^2(mq)-[p^2,m,q]-p^2(qm)-[p^2,q,m]+p((pm)q+(pq)m)\\
	&=p([p,m,q]+[p,q,m])\\
	&=0
	\end{align*}
The rest proof runs as classical case.	
\end{proof}

\begin{thm}[\textbf{Artin Theorem}]
	Let $M$ be a left alternative \algmod\ over $A$. Then $[p^m,p^n,x]=0$, for all $p\in A$,  any $m,n\in \mathbb{N}$ and  $x\in M$.
	
\end{thm}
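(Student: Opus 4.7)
The plan is to mimic the classical proof of Artin's theorem for alternative algebras, transferred to the module setting via the exchange identity \eqref{eq:[p,q,r]m+p[q,r,m]=[pq,r,m]-[p,qr,m]+[p,q,rm]}. The argument naturally splits into two nested inductions.

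First I would establish the special case $[p,p^n,x]=0$ for every $n\ge 0$ and every $x\in M$, by induction on $n$. The cases $n=0,1$ are immediate: $[p,1,x]=0$ because $L_1=\mathrm{id}_M$, and $[p,p,x]=0$ by left alternativity. For the inductive step I would apply \eqref{eq:[p,q,r]m+p[q,r,m]=[pq,r,m]-[p,qr,m]+[p,q,rm]} to the algebra triple $(p,p^{n-1},p)\in A^3$ with module argument $x$. The term $[p,p^{n-1},p]\cdot x$ vanishes because the associator in $A$ is alternating, and $p[p^{n-1},p,x]$ vanishes by the inductive hypothesis (after using skew-symmetry in the module). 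On the right, $[p^n,p,x]=-[p,p^n,x]$ by left alternativity, the term $-[p,p^n,x]$ is already of the target form, and $[p,p^{n-1},px]$ vanishes by the inductive hypothesis applied with the vector $px$ in place of $x$. What remains is $2[p,p^n,x]=0$, which gives the claim since we work in characteristic $\ne 2$ (in particular over $\spr$).

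With this special case in hand, I would prove the full statement $[p^m,p^n,x]=0$ by induction on $m$. The cases $m=0,1$ follow from the previous step together with skew-symmetry $[p^m,p^n,x]=-[p^n,p^m,x]$. For the inductive step I apply \eqref{eq:[p,q,r]m+p[q,r,m]=[pq,r,m]-[p,qr,m]+[p,q,rm]} to the triple $(p,p^{m-1},p^n)\in A^3$ with module argument $x$. The term $[p,p^{m-1},p^n]$ is zero in $A$ by the classical Artin theorem for alternative algebras, $p[p^{m-1},p^n,x]$ vanishes by the outer induction hypothesis on $m$, and both $[p,p^{m+n-1},x]$ and $[p,p^{m-1},p^n x]$ vanish by the first step. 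Rearranging the identity leaves exactly $[p^m,p^n,x]=0$.

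The main obstacle is the choice of substitution. A naive induction on the total degree $m+n$ fails because the exchange identity always produces a leftover term of the same total degree, typically $[p,p^{m+n-1},x]$. Isolating the one-parameter case $m=1$ as a separate step, powered by the symmetric substitution $(p,p^{n-1},p)$ that collapses the leftover into a second copy of the target associator, is precisely what removes this obstruction and lets the double induction close.
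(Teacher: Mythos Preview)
Your proof is correct, and its two-step inductive architecture (first reduce to one index equal to $1$, then induct on the other) matches the paper's. The difference is in the engine driving each step. You use the exchange identity \eqref{eq:[p,q,r]m+p[q,r,m]=[pq,r,m]-[p,qr,m]+[p,q,rm]} twice, together with the classical Artin theorem in $A$ for the second step; the paper instead argues directly with the left-multiplication operators. For the first step the paper observes, via the inductive hypothesis, that $p(p^{k+1}x)=p(p^k(px))=p^{k+1}(px)$, i.e.\ $L_pL_{p^{k+1}}=L_{p^{k+1}}L_p$, which forces $[p^{k+1},p,x]=[p,p^{k+1},x]$ and hence both vanish by alternativity. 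For the second step the paper simply unwinds $p^m(p^{k+1}x)$ using Step~1 and the inductive hypothesis, never invoking the exchange identity or the algebra version of Artin. Both approaches need characteristic $\ne 2$ at the same point (concluding $y=-y\Rightarrow y=0$). Your route is more systematic and makes explicit why the identity \eqref{eq:[p,q,r]m+p[q,r,m]=[pq,r,m]-[p,qr,m]+[p,q,rm]} is the right tool; the paper's is slightly more self-contained, as it avoids appealing to Artin's theorem in $A$ as an external input (it only uses power-associativity of $A$, which is immediate from alternativity).
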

\begin{proof}
	The proof will be divided into two steps.
	\begin{step }
		$[p^m,p,x]=0,\ \forall p\in A,\forall m\in \mathbb{N},\forall x\in M$.
	\end{step }
			Clearly it holds for $m=1$. Assume the formula holds for degree $k$, we will prove it for $k+1$.
		By induction hypothesis, $p(p^{k+1}x)=p(p^k(px))=p^{k+1}(px)$,
		hence
		\begin{align*}
		[p^{k+1},p,x]=p^{k+2}x-p^{k+1}(px)=p^{k+2}x-p(p^{k+1}x)=[p,p^{k+1},x]	
		\end{align*}
		By definition of alternative \algbmod, we thus conclude that $[p^{k+1},p,x]=0$.
	\begin{step }
		$[p^m,p^n,x]=0,\forall p\in A,\forall m,n\in \mathbb{N},\forall x\in M$.
		
	\end{step }
	Fix $m$, we prove this by induction on $n$.  We have just proved for case $n=1$. Assume the formula holds for degree $n=k$, we will prove it for $n=k+1$.
\begin{align*}
p^m(p^{k+1}x)&=	p^m((p^{k}p)x)&\text{by Step 1}\\
&=p^m(p^k(px))\\
&=(p^mp^k)(px)\\
&=p^{m+k}(px)&\text{by Step 1}\\
&=p^{m+k+1}x
\end{align*}
That is, $[p^m,p^{k+1},x]=0$. 	This proves the theorem.

%
%
%
%
%

\end{proof}

\

Our previous work gives a complete classification of left $\O$-modules  \cite{liyong2019octonionmodule}. 
The irreducible ones are already known to be isomorphic to the regular or conjugate regular modules \cite{zhevlakov1982Rings}. However, using the relation between octonions to Clifford algebra, we can give a more simple proof and classify the structure of left $\O$-modules completely.

It is well-known (for example, \cite{baez2002octonions,harvey1990spinors}) that the octonions have a very close relationship with spinors in $7,8$ dimensions. In particular, multiplication
by imaginary octonions is equivalent to Clifford multiplication on spinors in $7$ dimensions.
It follows that 	the category of left $\spo$-modules is isomorphic to the category of left $\clifd{7}$-modules.
Note that $\clifd{7}$ is a semi-simple algebra, we thus obtain that there are only two kinds of irreducible left $\O$-module.
They are the regular  module $\O$ and the conjugate regular module $\overline{\spo}$. Where the left module structure of $\overline{\spo}$ is defined by 
 $$p\hat{\cdot}x:=\overline{p}x,$$
  for all $p\in \spo$, and all $x\in \spo.$
  The \ass\ on $\conjgt{\O}$ is as follows:
 \begin{eqnarray}\label{eq:[]O-}
 [p,q,x]_{\overline{\O}}=[p,q,x]+\overline{[p,q]}x.
 \end{eqnarray}
 The direct sum of their   several copies exhaust all
 octonion modules with finite dimensions. 
 The structure of  general left $\O$-modules is then clear:
  \begin{thm}
 	Let $M$ be a left $\O$-module. Then  $$M\cong\spo\huaa{M}\oplus {\spo}\hua{A}{-}{M}.$$ 
 \end{thm}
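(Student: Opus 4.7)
The plan is to leverage the equivalence between left $\spo$-modules and left $\clifd{7}$-modules noted just above, the semisimplicity of $\clifd{7}$, and the classification of the irreducibles as either the regular module $\spo$ or the conjugate regular module $\conjgt{\spo}$. First I would decompose $M\cong M_+\oplus M_-$ as $\spo$-modules, where $M_+\cong\bigoplus_{i\in I}\spo_i$ collects the regular isotypic component and $M_-\cong\bigoplus_{j\in J}\conjgt{\spo}_j$ collects the conjugate regular one.

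Next I would exhibit natural cyclic generators inside each isotypic component. In each regular summand $\spo_i$ the unit $1_i$ satisfies $[p,q,1_i]=(pq)\cdot 1_i-p\cdot(q\cdot 1_i)=pq-pq=0$, so $1_i\in\huaa{M}$, and clearly $\spo\cdot 1_i=\spo_i$; hence $M_+\subseteq \spo\huaa{M}$. Dually, in each conjugate regular summand $\conjgt{\spo}_j$ the unit $1_j$ satisfies $(pq)\hat{\cdot} 1_j=\conjgt{pq}=\conjgt{q}\,\conjgt{p}=q\hat{\cdot}(p\hat{\cdot} 1_j)$, so $1_j\in\hua{A}{-}{M}$, and $\spo\hat{\cdot} 1_j=\conjgt{\spo}_j$; hence $M_-\subseteq\spo\hua{A}{-}{M}$. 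Together these yield $M=\spo\huaa{M}+\spo\hua{A}{-}{M}$.

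To upgrade the sum to a direct sum, I would establish two key vanishing claims: $\huaa{\conjgt{\spo}}=0$ and $\hua{A}{-}{\spo}=0$. For the first, the identity $[p,q,x]_{\conjgt{\spo}}=[p,q,x]+\conjgt{[p,q]}x$ reduces vanishing of the associator in $\conjgt{\spo}$ to a single equation in $\spo$; specializing to $p=e_i$, $q=e_j$ and expanding with the $\epsilon$-notation, together with $\epsilon_{ijm}\epsilon_{mkl}=\epsilon_{ijkl}+\delta_{ik}\delta_{jl}-\delta_{il}\delta_{jk}$, decouples the real and imaginary parts of the obstruction and forces $m=0$. The second claim follows from an essentially identical calculation using the defining identity $(pq)m=q(pm)$.

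Because the decomposition $M=M_+\oplus M_-$ is preserved by the associator (each $M_\pm$ being an $\spo$-submodule), one has $\huaa{M}=\huaa{M_+}\oplus\huaa{M_-}$ and similarly for $\hua{A}{-}{M}$. Applying the two vanishing claims to each irreducible summand then gives $\huaa{M}=\huaa{M_+}$ and $\hua{A}{-}{M}=\hua{A}{-}{M_-}$. Therefore $\spo\huaa{M}\subseteq M_+$ and $\spo\hua{A}{-}{M}\subseteq M_-$, which combined with the earlier inclusions forces equality, yielding the required direct sum decomposition $M\cong\spo\huaa{M}\oplus\spo\hua{A}{-}{M}$. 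The main obstacle is the $\epsilon$-bookkeeping in the two vanishing claims; the remainder is purely organizational given the semisimplicity input.
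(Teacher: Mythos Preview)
Your proposal is correct and follows essentially the same route as the paper: both rely on the equivalence of left $\spo$-modules with $\clifd{7}$-modules, the semisimplicity of $\clifd{7}$, and the classification of the two irreducibles as $\spo$ and $\conjgt{\spo}$, then identify the isotypic components with $\spo\huaa{M}$ and $\spo\hua{A}{-}{M}$ respectively.

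The one organizational difference worth flagging is that the paper singles out Lemma~\ref{lem:<m> is finite dim} (every cyclic submodule has real dimension at most $128$) as the bridge from the finite-dimensional classification to the general case, whereas you invoke directly that every module over a semisimple Artinian ring is a direct sum of simples. Your route is slightly cleaner and makes Lemma~\ref{lem:<m> is finite dim} unnecessary for this particular theorem; the paper's route has the virtue of being more self-contained for a reader who may not have the Artin--Wedderburn machinery at hand. The vanishing claims $\huaa{\conjgt{\spo}}=0$ and $\hua{A}{-}{\spo}=0$ that you isolate are implicit in the paper's identification of the two isotypic pieces (and are proved in the cited companion paper), and your $\epsilon$-calculation sketch for them is sound.
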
 
Where $\huaa{M} $ is the set of  all associative elements:
$$\huaa{M}:=\{m\in M\mid [p,q,m]=0,\ \forall p,q \in \O\};$$
$\hua{A}{-}{M}$ is the set of all conjugate associative elements:
$$\hua{A}{-}{M}:=\{m\in M\mid (pq)m=q(pm),\forall p,q\in \spo\}.$$
Its proof will depend on the following lemma. 
\begin{lemma}\label{lem:<m> is finite dim}
	Let $M$ be a left $\spo$-module, then $\generat{m}_\spo$ is  finite dimensional for any $m\in M$. More precisey, the dimension is at most $128$. 
\end{lemma}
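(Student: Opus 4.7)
The plan is to use the correspondence, announced in the paragraph preceding the lemma, between left $\spo$-modules and left $\clifd{7}$-modules; once this correspondence is made precise at the level of cyclic submodules, the bound $128=\dim_\R\clifd{7}$ is essentially free. Concretely, I would show that $\generat{m}_\spo$ coincides, as a real subspace of $M$, with the $\clifd{7}$-submodule $\clifd{7}\cdot m$ generated by $m$, and then invoke the trivial inequality $\dim_\R(\clifd{7}\cdot m)\leq\dim_\R\clifd{7}=2^7=128$.

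To install the $\clifd{7}$-action on $M$, I would first verify that the seven operators $L_{e_1},\ldots,L_{e_7}\in\End_\R M$ satisfy the Clifford relations
$$L_{e_i}L_{e_j}+L_{e_j}L_{e_i}=-2\delta_{ij}\,\mathrm{id}_M.$$
The identity $L_{a^2}=L_a^2$, which is merely left alternativity $[a,a,m]=0$ rewritten, gives $L_{e_i}^2=-\mathrm{id}_M$; polarizing by taking $a=e_i+e_j$ and using $e_ie_j+e_je_i=0$ for $i\neq j$ then yields the off-diagonal relation.

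Having equipped $M$ with a $\clifd{7}$-module structure, the inclusion $\clifd{7}\cdot m\subseteq\generat{m}_\spo$ is clear because $\clifd{7}\cdot m$ is the real span of iterated products $L_{e_{i_1}}\cdots L_{e_{i_k}}(m)$, each built from $m$ by repeated left multiplication by imaginary units. For the reverse inclusion, I would observe that for $p=p_0+\sum p_i e_i\in\spo$ one has $L_p=p_0\,\mathrm{id}+\sum p_i L_{e_i}$, which lies in the image of $\clifd{7}$ in $\End_\R M$; hence $\clifd{7}\cdot m$ is already stable under every $L_p$ and is itself an $\spo$-submodule of $M$ containing $m$. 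I do not anticipate any genuine obstacle: the conceptual point is that although octonion multiplication is non-associative, its left-multiplication operators by imaginary units \emph{do} associate in the Clifford sense, and this associativity is precisely what controls the size of a cyclically generated left submodule; the only care needed is to distinguish an $\spo$-submodule from a $\clifd{7}$-submodule, and the computation above shows the two closures coincide.
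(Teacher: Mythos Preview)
Your proposal is correct and is essentially the same argument as the paper's: both rest on the single relation $L_{e_i}L_{e_j}+L_{e_j}L_{e_i}=-2\delta_{ij}\,\mathrm{id}_M$, which the paper uses by hand to reduce any word $e_{i_1}(e_{i_2}(\cdots(e_{i_n}m)))$ to one with strictly increasing indices, while you package the same reduction as the statement that $\generat{m}_\spo=\clifd{7}\cdot m$. The resulting bound $\sum_{k=0}^{7}\binom{7}{k}=2^7=128$ is identical.
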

\begin{proof}
	$\generat{m}_\spo$ is such module generated by $e_{i_1}(e_{i_2}(\cdots (e_{i_n}m)))$, where $i_k\in\{1,2,\ldots,7\}, n\in \mathbb{N}$. Note that
	$$e_i(e_jm)+e_j(e_im)=(e_ie_j+e_je_i)m=-2\delta_{ij}m,$$
	hence the element defined by $e_{i_1}(e_{i_2}(\cdots (e_{i_n}m)))$ for $n>7$ can be reduced.
	Thus the vectors $\{m,\; e_1m,\;\ldots\;,\;e_7m,e_1(e_2)m,\;\ldots\;,e_1(e_2(\cdots (e_7m)))\}$  will generate $\generat{m}_\spo$,  we conclude that $\dim_\spr \generat{m}_\spo\leqslant C_7^0+C_7^1+\cdots+C_7^7=128$.
\end{proof}
\begin{rem}
	In fact, this property has already appeared in \cite{goldstine1964hilbert}. However, it is worth stressing the essentiality of this property. It  enables us to characterize  the structure of general left $\O$-modules in terms of finite dimensional case, which is already clear in view of the structure of the  $\clifd{7}$-modules.
\end{rem}

\section{Bimodule structure on  $\O$-modules }
As  shown in previous work \cite{liyong2019octonionmodule}, each left $\O$-module $M$ is some copies of $\O$
 and $\overline{\O}$.  It is   natural to ask whether a given left $\O$-module  admits a compatible $\O$-bimodule structure, and if so,  is it unique?  if not, what is the condition for a left $\O$-module to admit a compatible bimodule structure. In this section, we direct ourselves to answering these questions.
 
\subsection{Bimodule structure in low dimensions}
In this subsection, we are concerned with the $\O$-bimodule structures in low dimensional cases. The general case will be proved in a similar way in the sequel.   
We begin this subsection by proving a technical lemma which is  useful later.

\begin{lemma}\label{lem:almost linear}
	Let $f\in \End_\R(\O)$. Then the following are equivalent:
	\begin{enumerate}
		\item $\re \big(f(px)-pf(x)\big)=0$ for all $p,x\in \O$.
		\item $f(x)=f_0(x)-\sum e_if_0(e_ix)$, where $f_0(x)=\re f(x)$.
		\item There exists an octonion $ q\in \O $, such that $f(x)=xq$.
		
	\end{enumerate}

\end{lemma}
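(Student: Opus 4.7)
The plan is to prove the equivalence by a cycle $(\mathrm{iii})\Rightarrow(\mathrm{i})\Rightarrow(\mathrm{ii})\Rightarrow(\mathrm{iii})$, with the key tool being the elementary identity
$$y=\re y-\sum_{i=1}^{7}e_i\,\re(e_iy),\qquad y\in\O,$$
which is just the basis expansion $y=y_0+\sum y_ie_i$ combined with $\re(e_iy)=-y_i$. A second basic fact I will use repeatedly is that the associator has no real part, so $\re(e_i(xa))=\re((e_ix)a)$ for all $x,a\in\O$.

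For $(\mathrm{iii})\Rightarrow(\mathrm{i})$: write $f(px)-pf(x)=(px)q-p(xq)=-[p,x,q]$ and conclude using $\re[p,x,q]=0$.

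For $(\mathrm{i})\Rightarrow(\mathrm{ii})$: substituting $p=e_i$ into (i) and taking real parts gives $f_0(e_ix)=\re(e_if(x))$. Applying the basis identity above to $y=f(x)$ then yields
$$f(x)=\re f(x)-\sum_{i=1}^{7}e_i\,\re(e_if(x))=f_0(x)-\sum_{i=1}^{7}e_if_0(e_ix),$$
which is exactly (ii).

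For $(\mathrm{ii})\Rightarrow(\mathrm{iii})$: the map $f_0=\re\circ f:\O\to\R$ is $\R$-linear, and every such functional is of the form $x\mapsto\re(xa)$ for a unique $a\in\O$ (via nondegeneracy of the trace form $(x,y)\mapsto\re(xy)$). Plugging $f_0(x)=\re(xa)$ into (ii), using $\re((e_ix)a)=\re(e_i(xa))$, and applying the basis identity once more to $y=xa$ gives $f(x)=xa$, so $q=a$ works.

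The only step that requires any real thought is $(\mathrm{ii})\Rightarrow(\mathrm{iii})$, and even there the only nontrivial input is the representability of $\R$-linear functionals by $x\mapsto\re(xa)$; the rest is the two identities above. So I do not anticipate a serious obstacle, and I would present the three implications compactly in that order.
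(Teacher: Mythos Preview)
Your proof is correct and follows essentially the same route as the paper's: both argue the cycle using the basis expansion $y=\re y-\sum_i e_i\,\re(e_iy)$ (the paper derives this inline rather than stating it up front), the Riesz-type representation of the real functional $f_0$ (the paper phrases it via $\langle x,y\rangle_\R$ with $q=\bar y$, you via $f_0(x)=\re(xa)$), and the fact that the associator has vanishing real part. One harmless slip: in $(\mathrm{iii})\Rightarrow(\mathrm{i})$ the sign is $(px)q-p(xq)=[p,x,q]$, not $-[p,x,q]$, but this does not affect the conclusion.
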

\begin{proof}
	We prove \infer{1}{2}. Suppose $f(x)=f_0(x)+\sum e_if_i(x)$, where $f_j(x)\in \R,\ j=0,1,\ldots,7$. Using $\epsilon$-notation, we have
	\begin{align*}
	e_if(x)&=e_if_0(x)+e_i\sum e_jf_j(x)\\
	&=-f_i(x)+e_if_0(x)+\sum \epsilon_{ijk}e_kf_j(x)
	\intertext{and}
	f(e_ix)&=f_0(e_ix)+\sum e_jf_j(e_ix).
	\end{align*}  
It follows from   a\asertion{1} that  $\mathit {Re}\big(e_if(x)-f(e_ix)\big)=0$,  we infer that $f_i(x)=-f_0(e_ix)$. Thus a\asertion{2}  holds.

We prove \infer{2}{3}. Denote by  $\fx{\cdot}{\cdot}_\R$ the real inner product on $\O\cong \R^8$, and define $$\fx{x}{y}_\spo:=x\conjgt{y}.$$
By straight-forward calculation, we obtain $$\re \fx{x}{y}_\spo=\fx{x}{y}_\spr.$$ It follows that, $$\fx{e_ix}{y}_\spr=\re\big((e_ix)\conjgt{y}\big)=\re\big(e_i(x\conjgt{y})\big)=\re\big(e_i\fx{x}{y}_\spo\big).$$
This immediately implies $$ \fx{x}{y}_\spo= \fx{x}{y}_\spr-\sum e_i \fx{e_ix}{y}_\spr.$$
Thinking of $(\O,\fx{\cdot}{\cdot}_\R)$ as a real Hilbert space and  $f_0$  a real linear functional,  it follows from the Riesz Representation Theorem that, there exists an element  $y\in \O$ such that $f_0(x)=\fx{x}{y}_\spr$. Therefore by a\asertion{2},
$$f(x)= \fx{x}{y}_\spr-\sum e_i \fx{e_ix}{y}_\spr=x\overline{y}.$$
Setting $q=\overline{y}$, then $f(x)=xq$ as desired.

 We prove \infer{3}{1}. Note that the \ass\ is pure imaginary in $\O$, therefore
 \begin{align*}
 f(px)-pf(x)=(px)q-p(xq)=[p,x,q]\in \pureim{\O}.
 \end{align*}
This completes the proof.
\end{proof}
\begin{rem}
	We introduce a new notion of linearity in the theory of  octonion functional analysis utilizing  a\asertion{1} in a later paper. It turns out that this concept	plays a  role of ``linearity'' as   in the classical
	theory. 
\end{rem}
 Utilizing this lemma, we can determine the   bimodule structure on  $\spo^2$.
\begin{thm}\label{thm:O2 bimod}
	Let the left $\spo$-module structure of $\spo^2$ is 
	$$p(x,y)=(px,py)\quad \text{ for all }p\in \O, (x,y)\in \O^2.$$
	Then there is a unique compatible bimodule structure on $\spo^2$.
\end{thm}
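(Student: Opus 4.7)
The plan divides into existence and uniqueness.

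\emph{Existence.} The obvious candidate is the coordinate-wise right multiplication $(x,y)\cdot p := (xp,yp)$. Since $\O$ is alternative, all three associators in $\O^2$ reduce coordinate-wise to the corresponding associators in $\O$ and are therefore alternating. This confirms that $\O^2$ is an $\O$-bimodule compatible with the given left action.

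\emph{Uniqueness.} Let $R_p$ denote any compatible right multiplication and let $L_q$ denote the given left multiplication by $q$. Rewriting the alternating middle associator $[q,m,p]=[p,q,m]$ as an operator equation yields
\begin{equation*}
R_p L_q - L_q R_p = L_{pq} - L_p L_q,
\end{equation*}
whose right side depends only on the given left action. Thus, if $R'_p$ is a second compatible right multiplication, the difference $D_p := R_p - R'_p$ commutes with every $L_q$; that is, $D_p \in \End_\O(\O^2)$. By the structure theory recalled in Section 2, $\O$ is a simple left $\O$-module, and $\End_\O(\O) = \R$ because any such endomorphism $\phi$ satisfies $\phi(x) = x\phi(1)$ with $\phi(1) \in \mathrm{Nuc}(\O) = \R$. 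Hence $\End_\O(\O^2) = M_2(\R)$, acting on $\O^2$ by right multiplication by a real matrix. So $p \mapsto D_p$ is an $\R$-linear map $\O \to M_2(\R)$ with $D_1 = 0$; write $D_p = I \otimes N_p$ under the identification $\O^2 = \O \otimes_\R \R^2$.

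The final step exploits right-alternativity $R_p R_p = R_{p^2}$, which follows from the alternating identity $[m,p,p] = 0$. Polarising and subtracting the same identity for the canonical structure $R^{\mathrm{can}}_p$, all terms in which $R^{\mathrm{can}}$ appears alone cancel; and since $R^{\mathrm{can}}_p = R^{\O}_p \otimes I$ commutes on the tensor factors with each $D_q = I \otimes N_q$, the remaining equation becomes
\begin{equation*}
2\bigl(R^{\O}_p \otimes N_q + R^{\O}_q \otimes N_p\bigr) = I \otimes \bigl(N_{pq+qp} - N_p N_q - N_q N_p\bigr).
\end{equation*}
Expanding $R^{\O}_p = p_0 I + \sum_{i=1}^{7} p_i R^{\O}_{e_i}$ and using the linear independence of $\{I, R^{\O}_{e_1}, \ldots, R^{\O}_{e_7}\}$ in $\End_\R(\O)$, the coefficient of each $R^{\O}_{e_i} \otimes (\cdot)$ on the left must vanish: $p_i N_q + q_i N_p = 0$ in $M_2(\R)$. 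Setting $p = q$ gives $p_i N_p = 0$ for every $i$, so $N_p = 0$ whenever $p \notin \R$; combined with $D_1 = 0$ and $\R$-linearity this forces $N \equiv 0$, so $R_p = R^{\mathrm{can}}_p$. The main subtlety is recognising that although the canonical right action $R^{\mathrm{can}}$ is not itself a left-$\O$-module endomorphism, the \emph{difference} of any two compatible right actions is; this opens the door to the Schur-style argument, after which the polarised alternativity identity and coefficient comparison finish the job almost mechanically.
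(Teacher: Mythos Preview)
Your argument is correct and is a genuinely different route from the paper's. The paper proceeds by writing an unknown right action in coordinates, $(x,0)\cdot p=(f_p(x),g_p(x))$, $(0,x)\cdot p=(h_p(x),l_p(x))$, and then grinds out the constraints imposed by $[p,q,m]=[q,m,p]$ and $[p,q,m]=[m,p,q]$ separately: the off-diagonal pieces $g_p,h_p$ are forced into $\End_\O(\O)\cong\R$ directly, the diagonal pieces $f_p,l_p$ are handled via the preparatory Lemma~\ref{lem:almost linear} (so that $f_p(x)=x\tilde p$, $l_p(x)=x\hat p$), then a geometric argument with the slice $\spc_J$ kills the off-diagonal real scalars, and finally $p\mapsto\tilde p$ is recognised as an element of $G_2$ and shown to be the identity by another associator argument.

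Your approach is more structural: the single observation that $R_pL_q-L_qR_p$ is determined by the left action alone immediately puts the difference $D_p$ of two compatible right actions into $\End_\O(\O^2)=M_2(\R)$, and then polarised right-alternativity plus the tensor splitting $R^{\mathrm{can}}_p=R^\O_p\otimes I$, $D_p=I\otimes N_p$ finishes by comparing coefficients against the linearly independent family $\{I,R^\O_{e_1},\dots,R^\O_{e_7}\}$. This is shorter and avoids both Lemma~\ref{lem:almost linear} and the $G_2$/slice arguments; it also generalises verbatim to $\O^n$ with $M_n(\R)$ in place of $M_2(\R)$. What the paper's more explicit method buys is reusability in the non-existence direction: the same coordinate machinery, together with Lemma~\ref{lem:f=0 f(px)=overline(p)x}, is recycled to show that $\overline\O$ and $\O\oplus\overline\O$ admit \emph{no} compatible bimodule structure, a situation where there is no canonical right action to subtract and your difference trick does not directly apply.
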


\begin{proof}
	Suppose the right multiplication is given by $$(x,0)\cdot p=(f_p(x),g_p(x));\quad (0,x)\cdot p=(h_p(x),l_p(x)),$$ and hence $$(x,y)\cdot p=\big(f_p(x)+h_p(y),g_p(x)+l_p(y)\big).$$
	
\begin{step }
For all $p\in \O$,	$f_p,g_p,h_p,l_p\in End_\spr(\spo)$, and they  are  also real linear on $p$.
\end{step }	
		In view of Remark \ref{rem:def of alg-mod}, we have that for all $p\in \spo$,  and all $r\in \spr$,
		$$\big((x,y)\cdot p\big)r=(x,y)\cdot(rp)=\big((x,y)r\big)\cdot p$$
	that is,
	\begin{align*}
	\big(r(f_p(x)+h_p(y)),r(g_p(x)+l_p(y))\big)=\big(f_{rp}(x)+h_{rp}(y),g_{rp}(x)+l_{rp}(y)\big)
	= \big(f_p(rx)+h_p(ry),g_p(rx)+l_p(ry)\big)	.
	\end{align*}
	Let $x$ and $y$ equal zero respectively, we obtain the conclusion.
\begin{step }
Fulfilling the condition $[p,q,(x,y)]=[q,(x,y),p]$.
\end{step }	 
	 
	 In order to obtain a compatible bimodule structure, firstly we must have 
	 \begin{eqnarray}\label{eq:bimod 1id [p,q,(x,y)]=[q,(x,y),p]}
	 [p,q,(x,y)]=[q,(x,y),p]
	 \end{eqnarray}
	We compute:
	\begin{align*}
	[q,(x,y),p]&=(qx,qy)\cdot p-q\big(f_p(x)+h_p(y),g_p(x)+l_p(y)\big)\\
	&=\big(f_p(qx)+h_p(qy),g_p(qx)+l_p(qy)\big)-\big(qf_p(x)+qh_p(y),qg_p(x)+ql_p(y)\big)\\
	&=\big(f_p(qx)-qf_p(x)+h_p(qy)-qh_p(y),g_p(qx)-qg_p(x)+l_p(qy)-ql_p(y)\big).
	\end{align*}
	Let $y=0$, the equation \eqref{eq:bimod 1id [p,q,(x,y)]=[q,(x,y),p]} becomes

	\begin{numcases}{}
	f_p(qx)-qf_p(x)=[p,q,x]\label{eq:proof fp}\\
	g_p(qx)-qg_p(x)=0\label{eq:proof gp}
	\end{numcases}
		Let $x=0$, the equation \eqref{eq:bimod 1id [p,q,(x,y)]=[q,(x,y),p]} becomes
	
	\begin{numcases}{}
	h_p(qy)-qh_p(y)=0\label{eq:proof hp}\\
	l_p(qy)-qg_p(y)=[p,q,y]\label{eq:proof lp}
	\end{numcases}
	Equations \eqref{eq:proof gp} and \eqref{eq:proof hp} imply that $g_p,h_p\in End_\spo(\spo)$, it is easily seen that $Hom_\spo(\spo,\spo)\cong \spr$,  we thus can assume
	$$g_p(x)=r_px,h_p(x)=s_px,\quad r_p,s_p\in \spr.$$
	In view of Lemma \ref{lem:almost linear}, equations \eqref{eq:proof fp} and \eqref{eq:proof lp} ensure us to assume 
	$$f_p(x)=x\tilde{p},l_p(x)=x\hat{p}, \quad \tilde{p},\hat{p}\in \spo.$$
	Since $(x,y)\cdot 1=(x,y)$, we conclude $r_1=s_1=0$, $\tilde{1}=\hat{1}=1$.
	
\begin{step }
	Fulfilling the condition $[p,q,(x,y)]=[(x,y),p,q]$.
\end{step }	
	In order to obtain a compatible bimodule structure,  we need the following equation as well:
	\begin{eqnarray}\label{eq:bimod 2id [p,q,(x,y)]=[(x,y),p,q]}
	[p,q,(x,y)]=[(x,y),p,q]
	\end{eqnarray}
	for all $p,q\in \O$.
		We compute:
		\begin{align*}
		[(x,y),p,q]&=\big(f_p(x)+h_p(y),g_p(x)+l_p(y)\big)\cdot q-(x,y)\cdot (pq)\\
		&=\big(x\tilde{p}+s_py,r_px+y\hat{p}\big)\cdot q-\big(x\widetilde{(pq)}+s_{pq}y,r_{pq}x+y\widehat{(pq)}\big)\\
		 &=\big((x\tilde{p}+s_py)\tilde{q}+s_q(r_px+y\hat{p}),r_q(x\tilde{p}+s_py)+(r_px+y\hat{p})\hat{q}\big)-\big(x\widetilde{(pq)}+s_{pq}y,r_{pq}x+y\widehat{(pq)}\big)\\
		 &=\big((x\tilde{p}+s_py)\tilde{q}+s_q(r_px+y\hat{p})-x\widetilde{(pq)}-s_{pq}y,r_q(x\tilde{p}+s_py)+(r_px+y\hat{p})\hat{q}-r_{pq}x-y\widehat{(pq)}\big).
		\end{align*}
		Let $x=0$ and $y=0$ respectively, then we have:
		\begin{numcases}{}
		(x\tilde{p})\tilde{q}+s_qr_px-x\widetilde{(pq)}=[p,q,x]\label{eq:proof 2.1}\\
		r_q\tilde{p}+r_p\hat{q}-r_{pq}=0\label{eq:proof 2.2}\\
		s_p\tilde{q}+s_q\hat{p}-s_{pq}=0\label{eq:proof 2.3}\\
		r_qs_py+(y\hat{p})\hat{q}-y\widehat{(pq)}=[p,q,y]\label{eq:proof 2.4}
		\end{numcases}
\begin{step }
	Claim: $r_p=s_p=0$, for all $p\in\spo$.
\end{step }		
		If there exists $p_0\in\spo$, such that $r_{p_0}\neq 0$, then by equation \eqref{eq:proof 2.2}, we obtain:
		$$\hat{q}=r_{p_0}^{-1}(r_{p_0q}-r_q\tilde{p_0}),\quad \forall q\in \spo.$$
		Let $\tilde{p_0}\in \spc_J$ for some imaginary unit $J\in \S$,  we conclude $\hat{q}\in \spc_J$ for all $q$.	 
		Let $y=J$ in equation \eqref{eq:proof 2.4}, we thus get $$[p,q,J]\in \spc_J, \text{ for all } p,q\in \spo.$$
		However this is impossible. Indeed, we can choose $p\in \spo$ orthogonal to $J$, and then choose  $q$ orthogonal to $p$ and $J$, then $[p,q,J]\notin \spc_J$. This forces that $r_p=0$ for all $p\in\spo$. We can prove $s_p=0$ for all $p\in \spo$ in the same way.
		
\begin{step }
	Define $\sigma:p\mapsto \tilde{p}$ and $\tau:p\mapsto \hat{p}$.
	Claim: $\sigma=\tau=id$.
\end{step }	
		Let $x=1$ in equation \eqref{eq:proof 2.1}, we obtain $\sigma(pq)=\sigma(p)\sigma(q)$ and hence $\sigma\in G_2$. 
		Suppose that $\sigma\neq id$, that is,  there exists $p\in {\spo}$, such that $\sigma(p)=\tilde{p}\neq p$. Note that $\sigma(\pureim{\spo})\subseteq\pureim{\spo}$,  which yields $\re \sigma(p)=\sigma(\re p)=\re p$, we can assume $\re p=0$.  Let $x=\tilde{p}$ in equqtion \eqref{eq:proof 2.1}, we obtain 
		$$0=[p,q,\tilde{p}], \quad \forall q\in \spo.$$ However, $\sigma$ is an automorphism of $\spo$, we can certainly choose $q\in \spo$ such that $[p,q,\tilde{p}]\neq 0$, a contradiction. Similar argument apply to $\tau$.
		
%
%
%
In summary, the right multiplication  is just given by $(x,y)\cdot p=(xp,yp)$.
Therefore,  $\spo^2$ admits a unique compatible bimodule structure.
\end{proof}

\

Next we consider the case of $\spo\oplus\overline{\spo}$. 
\begin{thm}\label{thm:bimod on O+O-}
	Let the left $\spo$-module structure on $\spo\oplus\overline{\spo}$  is as follows:
	$$p(x,y)=(px,\overline{p}y).$$
	Then $\spo\oplus\overline{\spo}$ admits no compatible bimodule structures.
	
\end{thm}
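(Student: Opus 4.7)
The plan is to follow the layout of the proof of Theorem~\ref{thm:O2 bimod}: assume a compatible right action exists and write it as
\[(x,y)\cdot p=\bigl(f_p(x)+h_p(y),\ g_p(x)+l_p(y)\bigr),\]
with $f_p,g_p,h_p,l_p\in\End_{\spr}(\spo)$ each $\spr$-linear in $p$. The characteristic new feature is the conjugation appearing in the left action on the second factor, which produces
\[ [p,q,(x,y)]=\bigl([p,q,x],\ [p,q,y]_{\overline{\spo}}\bigr),\qquad [p,q,y]_{\overline{\spo}}:=[p,q,y]+\overline{[p,q]}\,y, \]
and the two alternative-bimodule identities $[p,q,(x,y)]=[q,(x,y),p]$ and $[p,q,(x,y)]=[(x,y),p,q]$ must both be checked.

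The first identity, with $y=0$ and then $x=0$, yields four equations; two of them read
\[ g_p(qx)=\bar q\,g_p(x),\qquad h_p(\bar q y)=q\,h_p(y), \]
i.e.\ they say that $g_p\colon\spo\to\overline{\spo}$ and $h_p\colon\overline{\spo}\to\spo$ are morphisms of left $\spo$-modules. Since $\spo$ and $\overline{\spo}$ are the two non-isomorphic simple left $\spo$-modules, Schur's lemma forces $g_p\equiv h_p\equiv 0$. The equation for $f_p$ matches the one in Theorem~\ref{thm:O2 bimod} and, with the second identity on the first coordinate, gives $f_p(x)=xp$. For $l_p$, setting $y=1$ in $l_p(\bar q y)-\bar q\,l_p(y)=[p,q,y]_{\overline{\spo}}$ expresses it in terms of $\hat p:=l_p(1)$; requiring the identity for all $(q,y)$ then forces $\hat p=2\imaginary p+t_p$ with $t_p\in\spr$ depending $\spr$-linearly on $p$ and $t_1=1$, and yields the closed form
\[ l_p(r)=(\imaginary p)\,r+r\,(\imaginary p)+t_p\, r. \]
The parameter $t_p$ is not pinned down further by the first identity alone, since any two candidate $l_p$'s differ by a left-$\spo$-linear endomorphism of $\overline{\spo}$, hence by a real scalar via Schur.

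To finish, I evaluate the second identity, which on the $y$-coordinate reads $l_q(l_p(y))-l_{pq}(y)=[p,q,y]_{\overline{\spo}}$, at the single point $p=q=e_1$, $y=e_2$. A direct computation gives
\[ l_{e_1}(e_2)=e_1e_2+e_2e_1+t_{e_1}e_2=t_{e_1}e_2, \]
so $l_{e_1}(l_{e_1}(e_2))=t_{e_1}^{2}e_2$; meanwhile $l_{e_1^{2}}(e_2)=l_{-1}(e_2)=-e_2$ using $t_{-1}=-1$. The left-hand side is therefore $(t_{e_1}^{2}+1)\,e_2$, while $[e_1,e_1,e_2]_{\overline{\spo}}=0$ since both $[e_1,e_1,\cdot]$ and $[e_1,e_1]$ vanish. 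This forces $t_{e_1}^{2}=-1$, which has no real solution, so no compatible bimodule structure can exist. The main technical obstacle I expect is the middle paragraph: deriving the closed form for $l_p$ from the twisted identity $l_p(\bar q y)-\bar q\,l_p(y)=[p,q,y]_{\overline{\spo}}$ requires careful use of the alternating property of the associator and the fact that $[\bar p,\bar q,y]=[p,q,y]$; once $l_p$ is pinned down, the final contradiction reduces to the one-line Fano-plane computation above.
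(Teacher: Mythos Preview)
Your proposal is correct and follows essentially the same route as the paper: write the right action via $f_p,g_p,h_p,l_p$, kill the off-diagonal pieces $g_p,h_p$, and then show that the induced right action $l_p$ on the $\overline{\spo}$ factor cannot satisfy the right-alternative identity. The only differences are cosmetic: you invoke Schur's lemma to get $g_p=h_p=0$ where the paper uses the explicit Lemma~\ref{lem:f=0 f(px)=overline(p)x}, and you inline the final contradiction (the paper packages it as Lemma~\ref{lem:simple bimod is O}) testing at $p=q=e_1$, $y=e_2$ instead of $y=1$; both test points give the same impossible equation $t_{e_1}^2+1=0$ (the paper's $r^2-3+4re_1=0$ is the $y=1$ version). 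The detour through $f_p(x)=xp$ is correct but unnecessary for the contradiction, which lives entirely in the $l_p$ component once $g_p=h_p=0$.
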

The proof of Theorem \ref{thm:bimod on O+O-} will rely on the following two lemmas, which are also important in the sequel.
\begin{lemma}\label{lem:f=0 f(px)=overline(p)x}
	Let $f\in End_\spr(\spo)$ satisfy $f(px)=\overline{p}f(x)$ for all $p,x \in \spo$, then $f=0$.
\end{lemma}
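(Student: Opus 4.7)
The plan is to reduce the identity to one about a single octonion $c = f(1)$, and then exploit the fact that associators in $\O$ are purely imaginary.

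First, I would substitute $x=1$ into the hypothesis $f(px)=\overline{p}f(x)$ to obtain $f(p)=\overline{p}c$, so that $f$ is completely determined by $c$. Resubstituting, the hypothesis becomes
\[
(\overline{x}\,\overline{p})c = \overline{p}(\overline{x}c),\qquad \forall x,p\in\O.
\]
Renaming $u=\overline{x}$, $v=\overline{p}$, this reads $(uv)c = v(uc)$ for all $u,v\in\O$. Rewriting the right-hand side as $v(uc)=(uv)c$ and plugging into the associator $[v,u,c]=(vu)c-v(uc)$ gives the key identity
\[
[v,u,c] \;=\; (vu-uv)c \;=\; [v,u]\,c,\qquad \forall u,v\in\O.
\]

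Now comes the crux: the left side is purely imaginary (associators in $\O$ have no real part), whereas the right side $[v,u]c$ generically does have a real part. Writing $c=c_0+\sum c_k e_k$ and taking $u=e_i,v=e_j$ with $i\neq j$, so that $[e_j,e_i]=2e_je_i=\pm 2e_m$ for the unique third unit $e_m$ in that quaternionic triple, the real part of $[e_j,e_i]c$ equals (up to sign) $2c_m$. Forcing this to vanish for every pair $(i,j)$ sweeps out every index $m\in\{1,\dots,7\}$, so $c_1=\cdots=c_7=0$, i.e.\ $c\in\R$.

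But if $c\in\R$, then $[v,u,c]=0$ identically, while $[v,u]\,c = c[v,u]$; choosing any pair $u,v\in\O$ with $[u,v]\neq 0$ (e.g.\ $u=e_1,v=e_2$) yields $c=0$, whence $f\equiv 0$. The only place any real work happens is the real-part computation, and even there the combinatorics is light because each pair $(i,j)$ singles out a unique third index through the Fano multiplication; no obstacle of substance is expected.
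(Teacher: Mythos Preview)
Your proof is correct and follows essentially the same route as the paper: reduce to $c=f(1)$, show $c\in\R$ by comparing real parts, then kill $c$ via a commutator. Your packaging via the single identity $[v,u,c]=[v,u]\,c$ (and the fact that associators are purely imaginary) is a cleaner version of the paper's explicit $\epsilon$-symbol computation of $f(e_ie_j)$ two ways, but the underlying argument is the same.
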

\begin{proof}
	Let $f(1)=x_0+\sum x_ie_i$, where $x_j\in \spr, j=0,1,\ldots,7$.
	Fix $i\neq j$, $i,j\in \{1,\ldots,7\}$. We compute:
	\begin{align*}
	f(e_ie_j)&=f(\epsilon_{ijk}e_k-\delta_{ij})\\
	&=-\epsilon_{ijk}e_kf(1)\\
	&=-\epsilon_{ijk}e_kx_0- \epsilon_{ijk}x_m(\epsilon_{kmn}e_n-\delta_{km})\\
	&=-\epsilon_{ijk}e_kx_0- \epsilon_{ijk}\epsilon_{kmn}x_me_n+ \epsilon_{ijk}x_k,
	\end{align*}
	and 
	\begin{align*}
	f(e_ie_j)&=\overline{e_i}f(e_j)\\
	&=\overline{e_i}(\overline{e_j}f(1))\\
	&=e_i(e_jx_0+e_je_mx_m)\\
	&=\epsilon_{ijk}e_kx_0+e_ix_m(\epsilon_{jmn}e_n-\delta_{jm})\\
	&=\epsilon_{ijk}e_kx_0+x_m\epsilon_{jmn}(\epsilon_{inl}e_l-\delta_{in})-e_ix_j\\
	&=\epsilon_{ijk}e_kx_0+x_m\epsilon_{jmn}\epsilon_{inl}e_l-x_m\epsilon_{jmi}-e_ix_j.
	\end{align*}
	Taking the real part of both equalities infers that:
	$$\epsilon_{ijk}x_k=-x_m\epsilon_{jmi}=-\epsilon_{ijk}x_k.$$
	This yields $x_k=0$, where $k$ is determined by $i,j$ uniquely. Since $i,j$ are fixed arbitrarily, we conclude $f(1)=x_0\in \spr$.
	Hence $$\overline{p}(\overline{x}x_0)=\overline{p}f(x)=f(px)=\overline{px}x_0=(\overline{x}\; \overline{p})x_0.$$
	That is 
	$$x_0[\overline{x},\overline{p}]=0, \quad \forall x,p\in \spo.$$
	This leads to $x_0=0$ and hence $f=0$.
	 
\end{proof}

\begin{lemma}\label{lem:simple bimod is O}
	The left  module	$\overline{\spo}$  admits no compatible bimodule structures.
\end{lemma}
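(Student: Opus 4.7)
The plan is to suppose for contradiction that $\overline{\O}$ admits a compatible $\spo$-bimodule structure, parametrize the right action by a single $\R$-linear map $\phi:\O\to\O$, and then exhibit a functional equation on $\phi$ that cannot be solved because of the cross product on $\imaginary\O$. Throughout I write $R_p(y):=y\cdot p$ and set $\phi(p):=R_p(1)$, noting that $R$ is $\R$-linear in $p$, so $\phi$ is $\R$-linear.

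First I would apply the bimodule identity $[p,q,m]_{\overline{\O}}=[q,m,p]_{\overline{\O}}$ at $m=1$. Since $\overline{pq}=\overline{q}\,\overline{p}$, the right-associator at $m=1$ becomes the commutator $[\overline{q},\overline{p}]$, while the middle-associator becomes $R_p(\overline{q})-\overline{q}\phi(p)$. Letting $y=\overline{q}$ range over $\O$ yields the explicit formula
\[
R_p(y)=y\phi(p)+[y,\overline{p}]\qquad\text{for all }y,p\in\O,
\]
so $R$ is completely encoded by the single $\R$-linear map $\phi$.

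Next I would substitute this formula back into $[p,q,m]_{\overline{\O}}=[q,m,p]_{\overline{\O}}$ for general $m$ and simplify using the alternating property of the octonion associator together with the identity $\overline{p}(\overline{q}m)-\overline{q}(\overline{p}m)=[\overline{p},\overline{q}]m-2[\overline{p},\overline{q},m]$. The $m$-dependence should telescope into $[\overline{q},m,\phi(p)+2\overline{p}]=0$ for all $q,m\in\O$. Because the nucleus of $\O$ is $\R$, this forces
\[
\phi(p)=-2\overline{p}+\beta(p),\qquad \beta:\O\to\R\text{ real-linear,}\quad\beta(1)=3.
\]

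Finally I would invoke the other bimodule identity $[p,x,q]_{\overline{\O}}=[x,p,q]_{\overline{\O}}$ at $x=1$, which relates $\phi(pq)$ to $\phi(p)\phi(q)$. Substituting $\phi(s)=-2\overline{s}+\beta(s)$ and separating imaginary parts via $\vec{s}:=\imaginary s$ and the octonion product formula $\vec{p}\vec{q}=-\fx{\vec{p}}{\vec{q}}+\vec{p}\times\vec{q}$, the identity reduces to
\[
(2\beta(p)-6\re p)\,\vec{q}+(2\beta(q)-6\re q)\,\vec{p}=4\,\vec{p}\times\vec{q}\qquad\text{for all }p,q\in\O.
\]
The left-hand side lies in $\mathrm{span}(\vec{p},\vec{q})$ but $\vec{p}\times\vec{q}$ is orthogonal to that span in $\R^7$; testing on $p=e_1,q=e_2$ makes the cross-product component nonzero and delivers the contradiction. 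The main obstacle will be the second step: the algebraic bookkeeping required to collapse the expanded identity into the clean centrality statement $\phi(p)+2\overline{p}\in\R$ demands careful tracking of nested associators and commutators. Once centrality is in hand, no real-valued $\beta$ can reconcile a span-component on the left with the perpendicular cross-product component on the right, and the non-existence follows.
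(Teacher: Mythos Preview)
Your proposal is correct and follows the paper's argument closely through the first three steps: both derive $R_p(y)=y\phi(p)+[y,\overline{p}]$ from the middle-associator identity at $m=1$, then substitute back to force $\phi(p)+2\overline{p}\in\R$ (equivalently, the paper's $R_p(1)-2p\in\R$). The only genuine difference is the endgame. The paper finishes with a one-variable check: writing $R_{e_1}(1)=2e_1+r$ for some $r\in\R$ and using $R_{e_1}^2=R_{e_1^2}$ yields $-1=-4+4re_1+r^2$, which is impossible. You instead invoke the identity $[p,1,q]_{\overline{\O}}=[1,p,q]_{\overline{\O}}$, reduce it to $2\beta(p)\vec{q}+2\beta(q)\vec{p}=4\vec{p}\times\vec{q}$ on imaginary $p,q$, and exploit orthogonality of the cross product. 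Both arguments work; the paper's is more economical (no need to expand $\phi(pq)$ or appeal to $\R^7$ geometry), while yours extracts slightly more structural information about $\beta$ before reaching the contradiction. Your caveat about the algebraic bookkeeping in step two is well placed---that collapse to $[\overline{q},m,\phi(p)+2\overline{p}]=0$ is exactly the computation the paper carries out line by line, and it does require care with the identity $[p,q,r]m+p[q,r,m]=[pq,r,m]-[p,qr,m]+[p,q,rm]$ or an equivalent manipulation.
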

\begin{proof}
	Suppose there exits an $\spo$-bimodule structure on 	$\overline{\spo}$ with a right scalar multiplication defined  by an $\R$-linear map $R\in \End_{\R}(\O)$. Write $R_p(x)=x\tilde{\cdot}p$. 
	By the definition of $\O$-bimodule, we have 
	$$[p,q,x]_{\overline{\O}}=[q,x,p]_{\overline{\O}},\quad \forall p,q,x\in \O.$$
	Note the equation \eqref{eq:[]O-}, we obtain:
	$$[p,q,x]+[\conjgt{q},\conjgt{p}]x=R_p(\conjgt{q}x)-\conjgt{q}{R_p(x)}.$$
	Replacing $\conjgt{q}$ with $q$, it becomes:
	\begin{equation}\label{eq:pf Rp(qx) O- no bimod}
	R_p(qx)=qR_p(x)-[p,q,x]+[q,\conjgt{p}]x
	\end{equation}
	Let $x=1$ in \eqref{eq:pf Rp(qx) O- no bimod}, we get:
	\begin{equation}\label{eq:pf Rp(q) O- no bimod}
	R_p(q)=qR_p(1)+[q,\conjgt{p}]
	\end{equation}
	It follows that 
	\begin{align*}
	R_p(qx)&=qR_p(x)-[p,q,x]+[q,\conjgt{p}]x\\
	&=q(xR_p(1)+[x,\conjgt{p}])-[p,q,x]+[q,\conjgt{p}]x
	\intertext{and }
	R_p(qx)&=(qx)R_p(1)+[qx,\conjgt{p}].
	\end{align*}
	Hence we conclude 
	\begin{align*}
	0&=(qx)R_p(1)+[qx,\conjgt{p}]-\Big(q(xR_p(1)+[x,\conjgt{p}])-[p,q,x]+[q,\conjgt{p}]x\Big)\\
	&=[q,x,R_p(1)]+(qx)\conjgt{p}-\conjgt{p}(qx)-q(x\conjgt{p}-\conjgt{p}x)-(q\conjgt{p}-\conjgt{p}q)x-[\conjgt{p},q,x]\\
	&=[R_p(1),q,x]+2[\conjgt{p},q,x]\\
	&=[R_p(1)-2{p},q,x].
	\end{align*}
	Since the above  equation holds for any $p,q,x\in \O$, this yields $$R_p(1)-2{p}\in \R, \quad \forall p\in \O.$$
	Hence we can assume $R_{e_1}=2e_1+r$ for some $r\in \R$. Note that formula \eqref{eq:pf Rp(q) O- no bimod} ensures $R_p(p)=pR_p(1)$, it follows that
	\begin{align*}
	R_{e_1}(R_{e_1}1)&=R_{e_1}(2e_1+r)\\
	&=2e_1(e_1+r)+r(2e_1+r)\\
	&=-4+4re_1+r^2.
	\end{align*}
	However, $R_{e_1}(R_{e_1}1)=R_{e_1^2}1=-1$, and hence we obtain
	$$-1=-4+4re_1+r^2,$$
	for some $r\in \R$, this is impossible.  This proves the lemma.	
\end{proof}
%
\begin{proof}[Proof of Theorem \ref{thm:bimod on O+O-}.]
	Suppose $\spo\oplus\overline{\spo}$ admits a compatible bimodule structure, and  the  right multiplication is as follows:
	$$(x,y)\cdot p=\big(f_p(x)+h_p(y),g_p(x)+l_p(y)\big).$$
	Similar as before, we can derive that $f_p,g_p,h_p,l_p\in End_\spr(\spo)$ for all $p\in \spo$, and are all real linear on $p$.
	Let $[p,q,(x,y)]=[q,(x,y),p]$, we obtain
	\begin{numcases}{}
	f_p(qx)-qf_p(x)=[p,q,x]\label{eq:proof 3.1}\\
	g_p(qx)-\overline{q}g_p(x)=0\label{eq:proof 3.2}\\
	h_p(\overline{q}y)-qh_p(y)=0\label{eq:proof 3.3}\\
	l_p(\overline{q}y)-\overline{q}l_p(y)=[p,q,y]_{\overline{\O}}\label{eq:proof 3.4}
	\end{numcases}
	Hence by Lemma \ref{lem:f=0 f(px)=overline(p)x}, $g_p=h_p=0$ for all $p\in \spo$.
	Let $[p,q,(x,y)]=[(x,y),p,q]$, we obtain
	\begin{gather}
	l_q(l_p(y))-l_{pq}(y)=[p,q,y]_{\overline{\O}}\label{eq:proof 3.5}
	\end{gather}
	The fact that $l_p(x)$ is real linear on $p$ and $x$,  along with the equation \eqref{eq:proof 3.5} imply that $$y\cdot_l p:=l_p(y)$$ defines a right $\spo$-module structure on $\overline{\spo}$ and satisfies $[p,q,y]_{\overline{\O}}=[y,p,q]_{\overline{\O}}$. Note that equation \eqref{eq:proof 3.4} yields
	$[p,q,y]_{\overline{\O}}=[q,y,p]_{\overline{\O}}$, this means that it  defines an $\spo$-bimodule structure on $\overline{\spo}$, which contradicts  the Lemma \ref{lem:simple bimod is O}.
\end{proof}
\subsection{Bimodule structures on finite dimensional $\O$-modules}
In this subsection, we will formulate the structure of finite dimensional $\O$-bimodules. As is shown in \cite{liyong2019octonionmodule}, each finite dimensional left $\O$-module $M$ is of the form: $$	M\cong \O^{ n}\oplus \overline{\spo}^{m}.$$
   $\spo^n$ is a left $\O$-module endowed with the left multiplication: $$p(x_1,\ldots,x_n)=(px_1,\ldots,px_n).$$  $\overline{\spo}^{n}\oplus\spo^m$ is a left $\O$-module endowed with the left multiplication:
$$p(x_1,\ldots,x_n,x_{n+1},\ldots,x_{n+m})=(\overline{p}x_1,\ldots,\overline{p}x_n,px_{n+1},\ldots,px_{n+m}).$$

 We are first concerned  with the case $\O^n$. The following theorem asserts  that it  admits a unique compatible bimodule structure.
\begin{thm}\label{thm:bimod on O^n}
	
	There exists a unique compatible bimodule structure on $\spo^n$.
	
\end{thm}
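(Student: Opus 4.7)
The plan is to generalize the five–step analysis used for $\O^2$ in Theorem~\ref{thm:O2 bimod} by indexing the coordinates with $i=1,\ldots,n$. Let $E_1,\ldots,E_n$ denote the standard basis and write an arbitrary compatible right multiplication as
$$(0,\ldots,x,\ldots,0)\cdot p=\bigl(f_{i1,p}(x),\ldots,f_{in,p}(x)\bigr),\qquad x \text{ in the } i\text{-th slot},$$
so that $(x_1,\ldots,x_n)\cdot p$ has $\ell$-th coordinate $\sum_{i}f_{i\ell,p}(x_i)$. Exactly as in Step~1 of the $n=2$ proof, the compatibility with the $\R$-action forces each $f_{ij,p}$ to lie in $\End_\R(\O)$ and to be $\R$-linear in $p$.

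Next I impose the middle-associator identity $[p,q,(x_1,\ldots,x_n)]=[q,(x_1,\ldots,x_n),p]$. Since the left action is diagonal, specializing the $x_i$ to a single nonzero coordinate decouples the identity into
$$f_{ii,p}(qx)-qf_{ii,p}(x)=[p,q,x],\qquad f_{ij,p}(qx)-qf_{ij,p}(x)=0\ \ (j\ne i).$$
The off-diagonal equations say $f_{ij,p}\in\End_\O(\O)\cong\R$, so $f_{ij,p}(x)=r_{ij,p}x$ with $r_{ij,p}\in\R$, and Lemma~\ref{lem:almost linear} applied to the diagonal equation yields octonions $\tilde p^{(i)}\in\O$ with $f_{ii,p}(x)=x\tilde p^{(i)}$ and $\tilde 1^{(i)}=1$.

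Then I impose the right-associator identity $[p,q,(x_1,\ldots,x_n)]=[(x_1,\ldots,x_n),p,q]$. Localizing at the $k$-th coordinate only and reading off the $\ell$-th component gives, for $\ell\ne k$ with $x=1$,
$$r_{k\ell,q}\,\tilde p^{(k)}+r_{k\ell,p}\,\tilde q^{(\ell)}+\sum_{j\ne k,\ell}r_{kj,p}\,r_{j\ell,q}-r_{k\ell,pq}=0,$$
and for $\ell=k$ the analogue of equation~\eqref{eq:proof 2.4}. Suppose $r_{k\ell,p_0}\ne 0$ for some $p_0$ and indices $k\ne\ell$; solving the first relation expresses $\tilde q^{(\ell)}$ as a real combination of $1$ and $\tilde p_0^{(k)}$, so $\tilde q^{(\ell)}$ lies in a fixed complex plane $\spc_J\subseteq\O$ for all $q\in\O$. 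Substituting $y=J$ into the diagonal relation for $\ell$ then forces $[p,q,J]\in\spc_J$ for every $p,q\in\O$, which is false (choose $p\perp J$ and $q\perp\{p,J\}$). Hence every $r_{ij,p}$ vanishes, and the diagonal relation reduces to $(x\tilde p^{(i)})\tilde q^{(i)}-x\widetilde{pq}^{(i)}=[p,q,x]$. Setting $x=1$ shows $\sigma_i:p\mapsto\tilde p^{(i)}$ is an algebra automorphism of $\O$, hence $\sigma_i\in G_2$; and repeating the final Step~5 contradiction $[p,q,\sigma_i(p)]=0$ from Theorem~\ref{thm:O2 bimod} yields $\sigma_i=\mathrm{id}$.

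Consequently the only compatible right multiplication is the componentwise one $(x_1,\ldots,x_n)\cdot p=(x_1p,\ldots,x_np)$, which is clearly a bimodule structure, proving both existence and uniqueness. The main obstacle is the off-diagonal bookkeeping: unlike the $n=2$ case there is an extra sum $\sum_{j\ne k,\ell}r_{kj,p}r_{j\ell,q}$ in the key relation. The crucial observation that saves the argument is that these extra terms are real, so they do not disturb the plane $\spc_J$ in which the argument takes place, allowing the $n=2$ contradiction to go through verbatim.
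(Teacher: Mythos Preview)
Your proposal is correct and follows essentially the same approach as the paper's own proof: both use the middle-associator identity to get $f_{ij,p}(x)=r_{ij,p}x$ (real) off-diagonal and $f_{ii,p}(x)=x\tilde p^{(i)}$ on the diagonal, then use the right-associator relation together with the observation that the extra cross terms $\sum_{j\ne k,\ell}r_{kj,p}r_{j\ell,q}$ are real to trap $\tilde q^{(\ell)}$ in a single complex plane $\spc_J$ and reach the $[p,q,J]\in\spc_J$ contradiction, before finishing with the $G_2$-automorphism argument exactly as in Step~5 of Theorem~\ref{thm:O2 bimod}. The only cosmetic difference is that the paper takes imaginary parts of the off-diagonal relation while you solve it outright for $\tilde q^{(\ell)}$; these are equivalent since the remaining terms are real.
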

\begin{proof}
	
	Suppose there exists a compatible  bimodule structure on $\spo^n$ and the right multiplication is as follows: 
	$$(0,\ldots,0,x_i,0,\ldots,0)\cdot p=(f_{i1}(p;x_i),\ldots,f_{in}(p;x_i)).$$
	Then $$(x_1,\ldots,x_n)\cdot p=\big(\sum f_{i1}(p;x_i),\ldots,\sum f_{in}(p;x_i) \big).$$
	Similar as before,  $f_p,g_p,h_p,l_p\in End_\spr(\spo)$ are real linear maps  for all $p\in \spo$  and also   real linear on $p$.
\begin{step }
	 $[p,q,(x_1,\ldots,x_n)]=[q,(x_1,\ldots,x_n),p]$.
\end{step }	
	
	Let  $[p,q,(x_1,\ldots,x_n)]=[q,(x_1,\ldots,x_n),p]$, we obtain	
$$	[p,q,x_j]=\sum f_{ij}(p;qx_i)-qf_{ij}(p;x_i),\quad j=1,\ldots,n.$$
Let $i_0\in \{1,\ldots,n\}$, set  $$x_i=\begin{cases}
x, &i=i_0\\
0, &i\neq i_0
\end{cases},$$
then we obtain:
\begin{numcases}{}
f_{i_0j}(p;qx)-qf_{i_0j}(p;x)=0, &$\quad j\neq i_0$\label{eq:proof 4.1}\\
f_{i_0i_0}(p;qx)-qf_{i_0i_0}(p;x)=[p,q,x],&$i_0\in \{1,\ldots,n\}$\label{eq:proof 4.2}
\end{numcases}
Since $i_0$ is fixed arbitrarily, we conclude from  equations \eqref{eq:proof 4.1} that $f_{ij}(p;x)$ is $\spo$-homomorphism when $i\neq j$. hence we can assume as before
$$f_{ij}(p;x)=r_{ij}(p)x, \quad r_{ij}(p)\in \spr, \; i\neq j.$$
Equations \eqref{eq:proof 4.2} enable us to assume
$$f_{ii}(p;x)=xr_{ii}(p), \quad r_{ii}(p)\in \spo.$$

\begin{step }
	$[p,q,(x_1,\ldots,x_n)]=[(x_1,\ldots,x_n),p,q]$.
\end{step }
Let $[p,q,(x_1,\ldots,x_n)]=[(x_1,\ldots,x_n),p,q]$, we obtain
$$[p,q,x_l]=\sum \big(x_ir_{ik}(p)\big)r_{kl}(q)-\sum x_ir_{il}(pq).$$ 
Let  $$x_i=\begin{cases}
x, &i=l_0\\
0, &i\neq l_0
\end{cases},$$
then we obtain:
\begin{numcases}{}
[p,q,x]=\sum \big(xr_{l_0k}(p)\big)r_{kl_0}(q)- xr_{l_0l_0}(pq),&$l_0\in \{1,\ldots,n\}$\label{eq:proof 4.3}\\
0=\sum \big(xr_{l_0k}(p)\big)r_{kl}(q)- xr_{l_0l}(pq),&$\quad l\neq l_0$\label{eq:proof 4.4}
\end{numcases}
Note that $r_{ij}(p)\in \spr$ for any distinct indices $i$ and $j$, hence equations \eqref{eq:proof 4.4} are equivalent to 
\begin{eqnarray}
\sum r_{l_0k}(p)r_{kl}(q)- r_{l_0l}(pq)=0,\quad l\neq l_0\label{eq:proof 4.5}
\end{eqnarray}
\begin{step }
	Claim: $r_{jl}(p)=0$ for all $p\in \spo,\; j\neq l$.
\end{step }

Suppose on the contrary, there exists $p\in \spo$, and $j_0\neq i_0$, such that $r_{i_0j_0}(p)\neq 0$. Let $l=j_0$ and take imaginary part on both sides of equations \eqref{eq:proof 4.5}, we obtain
$$\imaginary \big(r_{i_0i_0}(p)r_{i_0j_0}(q)+r_{i_0j_0}(p)r_{j_0j_0}(q)\big)=0,$$
thus $$\imaginary r_{j_0j_0}(q)=-r_{i_0j_0}(p)^{-1}r_{i_0j_0}(q)\imaginary r_{i_0i_0}(p).$$
Suppose $r_{i_0i_0}(p)\in \spc_J$ for some imaginary unit $J$, then we conclude that 
$$r_{j_0j_0}(q)\in \spc_J, \quad \forall q\in \spo.$$
Replacing $l_0$ with $j_0$ and $x$ with $J$ in equations \eqref{eq:proof 4.3}, we get
$$[p,q,J]\in \spc_J,\quad \forall p,q\in \spo.$$
Thus we have arrived at a contradiction.

Now equations \eqref{eq:proof 4.3} become
$$[p,q,x]=\big(xr_{ll}(p)\big)r_{ll}(q)- xr_{ll}(pq), \quad l=1,\ldots,n.$$
As in the proof of Theorem \ref{thm:O2 bimod} of the case $n=2$, we can deduce $r_{ll}=id$ for $l=1,\ldots,n$. This completes the proof.
\end{proof}

By  similar argument, we can prove:
\begin{thm}\label{thm:bimod on On+O-m}
 There exist no bimodule structures on $\overline{\spo}^{n}\oplus\spo^m$ when $n>0$.
\end{thm}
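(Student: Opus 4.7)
The plan is to mirror the strategy of Theorem \ref{thm:bimod on O+O-} in this higher-dimensional setting, reducing the problem to a single $\overline{\O}$-summand and applying Lemma \ref{lem:simple bimod is O}. Suppose for contradiction that $\overline{\O}^n \oplus \O^m$ admits a compatible bimodule structure. Write the right multiplication coordinate-wise,
$$(0,\ldots,0,x_i,0,\ldots,0)\cdot p = \bigl(f_{i1}(p;x_i),\ldots,f_{i,n+m}(p;x_i)\bigr),$$
so that by Remark \ref{rem:def of alg-mod} each $f_{ij}$ is $\R$-bilinear in $p$ and $x$. Say indices $1,\ldots,n$ constitute the \emph{conjugate block} and $n{+}1,\ldots,n{+}m$ the \emph{regular block}.

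The first step is to impose the middle-associator identity $[p,q,(x_1,\ldots,x_{n+m})] = [q,(x_1,\ldots,x_{n+m}),p]$ and isolate each input coordinate by zeroing out the others, as in the proof of Theorem \ref{thm:bimod on O^n}. This produces four families of equations, according to which block the source $i$ and target $j$ lie in. When $i$ and $j$ belong to different blocks, the equation takes the form $f_{ij}(p;\bar{q}x) = qf_{ij}(p;x)$ or its conjugate, which by Lemma \ref{lem:f=0 f(px)=overline(p)x} forces $f_{ij}\equiv 0$. Hence the right action preserves the decomposition $\overline{\O}^n \oplus \O^m$, and the regular and conjugate blocks decouple.

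Next, restrict attention to the conjugate block $\overline{\O}^n$ (nonempty since $n>0$). For distinct $i,j\leqslant n$ the middle-associator equations yield $f_{ij}(p;\bar{q}x) = \bar{q}f_{ij}(p;x)$, so these off-diagonal maps are endomorphisms of the simple left $\O$-module $\overline{\O}$; since $\End_{\O}(\overline{\O})\cong \R$, they are multiplication by a real scalar $r_{ij}(p)$. The diagonal map $l_p(y) := f_{11}(p;y)$ on the first $\overline{\O}$-summand is then constrained by the middle-associator equation
$$l_p(\bar{q}y) - \bar{q}\,l_p(y) = [p,q,y]_{\overline{\O}}.$$
Now impose the right-associator identity $[p,q,(x_1,\ldots,x_{n+m})] = [(x_1,\ldots,x_{n+m}),p,q]$ and, repeating the scalar-elimination argument of Steps 3 and 4 of Theorem \ref{thm:bimod on O^n} to kill the off-diagonal scalars $r_{ij}$, obtain the relation
$$l_q(l_p(y)) - l_{pq}(y) = [p,q,y]_{\overline{\O}}.$$
Together these two identities say that $y\cdot p := l_p(y)$ makes $\overline{\O}$ into a compatible $\O$-bimodule, contradicting Lemma \ref{lem:simple bimod is O}.

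The main obstacle is the bookkeeping for the off-diagonal scalars $r_{ij}(p)$ within the conjugate block: one must verify that they cannot be leveraged to spread the bimodule structure nontrivially across several summands, and in particular that the diagonal entry $l_p$ alone carries an honest $\overline{\O}$-bimodule structure. This is precisely the content of the argument that ruled out nonzero $r_{ij}$ in Theorem \ref{thm:bimod on O^n}, using the choice of an imaginary unit $J$ and the identity $[p,q,J]\notin \C_J$ for a suitable $p,q$; I expect the same device to apply verbatim here.
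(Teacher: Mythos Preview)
Your proposal is correct and mirrors the paper's proof closely: decouple the blocks via Lemma \ref{lem:f=0 f(px)=overline(p)x}, reduce the off-diagonal maps within the conjugate block to real scalars, eliminate these scalars with the $\spc_J$-argument, and conclude that the diagonal map $l_p$ equips $\overline{\O}$ with a bimodule structure, contradicting Lemma \ref{lem:simple bimod is O}. One small adjustment: in the conjugate block the contradiction arises from $[p,q,1]_{\overline{\O}}=\overline{[p,q]}\in\spc_J$ for all $p,q$ (evaluated at $x=1$, since the diagonal maps $f_{jj}$ no longer have the form $x\mapsto x\hat p$) rather than $[p,q,J]\in\spc_J$, but this is the same device.
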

\begin{proof}
	Suppose on the contrary there exists an $\spo$-bimodule structure on  $\overline{\spo}^{n}\oplus\spo^m$ and the right scalar multiplication is given by:
	$$(x_1,\ldots,x_n,x_{n+1},\ldots,x_{n+m})\cdot p=\left(\sum_{i=1}^N  f_{i1}(p;x_i),\ldots,\sum_{i=1}^N f_{in}(p;x_i)\right),$$
	where $N=n+m$.
	
	\begin{step }
	$[p,q,(x_1,\ldots,x_n,x_{n+1},\ldots,x_{n+m})]=[q,(x_1,\ldots,x_n,x_{n+1},\ldots,x_{n+m}),p]$.
\end{step }
We first compute $[q,(x_1,\ldots,x_N),p]$.
\begin{align*}
[q,(x_1,\ldots,x_N),p]&=(\overline{q}x_1,\ldots,\overline{q}x_n,qx_{n+1},\ldots,qx_{n+m})\cdot p-q\left(\sum_{i=1}^N  f_{i1}(p;x_i),\ldots,\sum_{i=1}^N f_{in}(p;x_i)\right)\\
&=\left(\sum_{i=1}^n f_{ij}(p;\overline{q}x_i)-\overline{q}f_{ij}(p;x_i)+\sum_{i=n+1}^N f_{ij}(p;qx_i)-\overline{q}f_{ij}(p;x_i)\right)_{j=1}^n+\\
&\quad\left(\sum_{i=1}^n f_{ij}(p;\overline{q}x_i)-{q}f_{ij}(p;x_i)+\sum_{i=n+1}^N f_{ij}(p;qx_i)-{q}f_{ij}(p;x_i)\right)_{j=n+1}^N,
\end{align*}
where $(x_j)_{j=1}^n:=(x_1,\ldots,x_n,0,\ldots,0)\in \overline{\spo}^{n}\oplus\spo^m$, similar for $(x_j)_{j=n+1}^N$.

By the  definition of $\O$-bimodule, we have $$[p,q,(x_1,\ldots,x_n,x_{n+1},\ldots,x_{n+m})]=[q,(x_1,\ldots,x_n,x_{n+1},\ldots,x_{n+m}),p].$$ 
Fix $j_0\in \{1,\ldots,N\}$ and let 
$$x_i=
\begin{cases}
x, &i=j_0\\
0, &i\neq j_0
\end{cases}.$$ 
If $j_0 \in \{1,\ldots,n\}$, we obtain:
\begin{numcases}{}
f_{j_0j_0}(p;\overline{q}x)-\overline{q}f_{j_0j_0}(p;x)=[p,q,x]_{\overline{\O}}, &$j_0\in\{1,\ldots,n\}$\label{eq:proof 5.1}\\
f_{j_0j}(p;\overline{q}x)-\overline{q}f_{j_0j}(p;x)=0, &$j_0\neq j\in \{1,\ldots,n\}$\label{eq:proof 5.2}\\
f_{j_0j}(p;\overline{q}x)-{q}f_{j_0j}(px)=0, &$j\in \{n+1,\ldots,N\}$\label{eq:proof 5.3}
\end{numcases}
If $j_0 \in \{n+1,\ldots,N\}$, we obtain:
\begin{numcases}{}
f_{j_0j_0}(p;{q}x)-{q}f_{j_0j_0}(p;x)=[p,q,x], &$j_0 \in \{n+1,\ldots,N\}$\label{eq:proof 5.4}\\
f_{j_0j}(p;{q}x)-\overline{q}f_{j_0j}(p;x)=0, &$  j\in \{1,\ldots,n\}$\label{eq:proof 5.5}\\
f_{j_0j}(p;{q}x)-{q}f_{j_0j}(p;x)=0,&$ j_0\neq j\in \{n+1,\ldots,N\}$\label{eq:proof 5.6}
\end{numcases}
By Lemma \ref{lem:f=0 f(px)=overline(p)x} and equations \eqref{eq:proof 5.3} and \eqref{eq:proof 5.5}, we conclude that $f_{ij}=0$ for $i\in \{1,\ldots,n\}$, $j\in \{n+1,\ldots,N\}$ or $j\in \{1,\ldots,n\}$, $i\in \{n+1,\ldots,N\}$; the same as before, we can assume 
$$f_{j_0j}(p;x)=r_{j_0j}(p)x,\quad r_{j_0j}(p)\in\spr$$ for $j_0,j\in \{1,\ldots,n\}, j_0\neq j$ and $j_0,j\in \{n+1,\ldots,N\}, j_0\neq j$.

\begin{step }
	$[p,q,(x_1,\ldots,x_n,x_{n+1},\ldots,x_{n+m})]=[(x_1,\ldots,x_n,x_{n+1},\ldots,x_{n+m}),p,q]$.
\end{step }
Now we compute $[(x_1,\ldots,x_N),p,q]$.
\begin{align*}
[(x_1,\ldots,x_N),p,q]&=\left(\sum _{i=1}^N f_{ij}(p;x_i)\right)_{j=1}^N\cdot q-\left(\sum _{i=1}^N f_{ij}(pq;x_i)\right)_{j=1}^N\\
&=\left(\sum _{k=1}^N f_{kj}\Big(q;\sum _{i=1}^N f_{ik}(p;x_i)\Big)-\sum _{i=1}^N f_{ij}(pq;x_i)\right)_{j=1}^N.
\end{align*}
Similar as before, we have:
\begin{numcases}{}
\sum _{k=1}^N f_{kj_0}\big(q; f_{j_0k}(p;x)\big)-f_{j_0j_0}(pq;x)=[p,q,x]_{\overline{\O}},&$\quad j_0\in \{1,\ldots,n\}$ \label{eq:prf 7}\\
\sum _{k=1}^N f_{kj_0}\big(q; f_{j_0k}(p;x)\big)-f_{j_0j_0}(pq;x)=[p,q,x],&$\quad j_0\in \{n+1,\ldots,N\}$\\
\sum _{k=1}^N f_{kj}\big(q; f_{j_0k}(p;x)\big)-f_{j_0j}(pq;x)=0,&$\quad j\neq j_0,\;  j,j_0\in \{1,\ldots,N\}$ \label{eq:prf 8}
\end{numcases}

Note what we have just proved, we can rewrite equations \eqref{eq:prf 7} as follows:
\begin{eqnarray}\label{eq:prof 7'}
\sum _{k=1,k\neq j_0}^n r_{kj_0}(q)r_{j_0k}(p)x+f_{j_0j_0}\big(q; f_{j_0j_0}(p;x)\big)-f_{j_0j_0}(pq;x)=[p,q,x]_{\overline{\O}} \quad j_0\in \{1,\ldots,n\}
\end{eqnarray}
Rewrite equations \eqref{eq:prf 8} as follows:
\begin{eqnarray}\label{eq:prof 8'}
\sum _{k=1,k\neq j_0,k\neq j}^n r_{kj_0}(q)r_{j_0k}(p)x+f_{jj}(q;r_{j_0j}(p)x)+r_{j_0j}(q)f_{j_0j_0}(p;x)-r_{j_0j}(pq)x=0
\end{eqnarray}
The equations \eqref{eq:prof 8'} hold for $ j\neq j_0,\;  j,j_0\in \{1,\ldots,N\}$. 

\begin{step }
	 $r_{ij}(p)=0$ for all $i\neq j$ in $\{1,\ldots,n\}$.
\end{step }

Taking imaginary part on both sides of equations \eqref{eq:prof 8'}, we get 
$$\imaginary \Big(r_{j_0j}(p)f_{jj}(q;x)+r_{j_0j}(q)f_{j_0j_0}(p;x)\Big)=0.$$
If there exists an octonion $ p\in \spo$, and $ l_0,l\in \{1,\ldots,n\}, l_0\neq l$ such that $r_{l_0l}(p)\neq 0$,  let $x=1$ and   $f_{l_0l_0}(p;1)\in \spc_J$,  we then conclude as before
$$f_{ll}(q;1)\in \spc_J,\quad \forall q\in \spo.$$
However, repalcing $j_0$ by $l$ in equations \eqref{eq:prof 7'}, we conclude $$[p,q,1]_{\overline{\O}}=\overline{[p,q]}\in \spc_J, \quad \forall p,q\in \spo.$$ Obviously this is impossible. Thus we have arrived at a contradiction. This shows $r_{ij}(p)=0$ for all $i\neq j$ in $\{1,\ldots,n\}$.

\begin{step }
For each $j=1,\ldots,n$, $f_{jj}(p;x)$ 	defines a bimodule structure on $\overline{\O}$.
\end{step }
Thanks to Step 3, equations \eqref{eq:prof 7'} become 
$$f_{j_0j_0}\big(q; f_{j_0j_0}(p;x)\big)-f_{j_0j_0}(pq;x)=[p,q,x]_{\overline{\O}}, \quad j_0\in \{1,\ldots,n\}.$$
This imply that we get a right  $\spo$-module sturcture on $\overline{\spo}$ with the right multiplication defined by $x \cdot_j p:=f_{jj}(p;x)$ for each $j=1,\ldots,n$. Moreover, combining equations \eqref{eq:proof 5.1} and  \eqref{eq:prof 7'} yields an $\spo$-bimodule structure  on $\overline{\spo}$, which contradicts the Lemma \ref{lem:simple bimod is O}. This proves the theorem.
\end{proof}

\subsection{The structure of general $\O$-bimodule}
In this subsection, we are in a position to deal with the bimodule structure of general left $\O$-modules.
We have shown that each left $\O$-module has a ``basis'' in a previous paper \cite{liyong2019octonionmodule}, this loosely means that each left $\O$-module is a “free” module.   
In   much the same way as finite dimensional case, we can prove that a left $\O$-module $M$ admits a compatible bimodule structure if and only if  $M=\O \huaa{M}$. Moreover, the bimodule structure is unique if it exists. 

In view of identity \eqref{eq:[p,q,r]m+p[q,r,m]=[pq,r,m]-[p,qr,m]+[p,q,rm]}, it holds $[p,q,rx]=[p,q,r]x$ for any  associative element $x\in \huaa{M}$.  We now give a similar formula for conjugate associative element.
For conveniention, we define a new \ass, denoted by  $\dbb{p}{q}{r}:=[p,q,r]+r[p,q]$. Then by direct calculation,  we have for any conjugate associative element $x\in \hua{A}{-}{M}$:
\begin{eqnarray}
[p,q,rx]=\dbb{p}{q}{r}x.
\end{eqnarray}
In fact, let $x\in \hua{A}{-}{M}$,
\begin{align*}
[p,q,rx]&=(pq)(rx)-p(q(rx))\\
&=(r(pq)-(rq)p)x\\
&=(r[p,q]-[r,q,p])x\\
&=\dbb{p}{q}{r}x.
\end{align*} 
By the way, we can give an alternative  derivation of  the \ass\ of $\overline{\O}$  as follows:
\begin{align*}
[p,q,x]_{\overline{\O}}&=[p,q,\overline{x}\hat{\cdot}1]_{\overline{\O}}
=\dbb{p}{q}{\overline{x}}\hat{\cdot}1
=\overline{\dbb{p}{q}{\overline{x}}}
=[p,q,x]+\overline{[p,q]}x.
\end{align*}
In particular, we get 
\begin{eqnarray}\label{eq:[p,q,x]O-=[p,q,x]-}
[p,q,x]_{\overline{\O}}=\overline{\dbb{p}{q}{\overline{x}}}.
\end{eqnarray}


\begin{thm}\label{thm:general bimod strc}
	A left $\O$-module $M$ admits a compatible bimodule structure if and only if  $M=\O \huaa{M}$. 
	
	Moreover, in  this case, the right scalar multiplication on  $  \huaa{M}$ coincides with the left scalar multiplication: 
	$$xp=px,\quad \forall p\in \O,\;\forall x\in \huaa{M}.$$
	And   this determines the right scalar multiplication on $M$.
\end{thm}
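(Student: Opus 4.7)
My plan is to separate the three assertions --- necessity, sufficiency, and uniqueness --- and reduce each of them to the finite-dimensional classification given by Theorems~\ref{thm:bimod on O^n} and~\ref{thm:bimod on On+O-m}. The bridge that enables this reduction is the observation that, for any element $x$ of a compatible $\O$-bimodule $M$, the sub-bimodule of $M$ generated by $x$ is finite-dimensional over $\R$. I would establish this first: using the bimodule axioms $[p,q,m] = [q,m,p] = [m,p,q]$ together with the Moufang identities, every element of the sub-bimodule generated by $x$ reduces to an $\R$-linear combination of terms of the form $(px)q$ with $p,q \in \O$, together with at most an additional $8$-dimensional correction of the form $rx$ coming from identities like $(xp)q = x(pq) + [p,q]x$. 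In conjunction with Artin's Theorem this bounds the dimension of the sub-bimodule by roughly $72$.

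For sufficiency, assume $M = \O\huaa{M}$. Since $\huaa{M}$ is a real vector space (the left $\O$-action restricted to $\R$ is ordinary scalar multiplication by Remark~\ref{rem:def of alg-mod}), the classification of left $\O$-modules from~\cite{liyong2019octonionmodule} yields a canonical identification $M \cong \O \otimes_\R \huaa{M}$ of left $\O$-modules. I would then define the right multiplication by $(q\otimes v)\cdot p := (qp)\otimes v$. All three associators on the element $q\otimes v$ collapse to $[p,q,r]\otimes v$ (with the octonion arguments suitably arranged), and the bimodule axioms follow directly from the alternativity of the octonion associator.

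For uniqueness, fix $x \in \huaa{M}$ under any compatible bimodule structure on $M$, and let $N$ denote the sub-bimodule generated by $x$. By the finite-dimensionality observation and Theorem~\ref{thm:bimod on On+O-m}, the left-module structure of $N$ contains no $\overline{\O}$ summand, so $N \cong \O^a$ as a left module; Theorem~\ref{thm:bimod on O^n} then uniquely determines its bimodule structure to be the tensor one. Using this description, since $x$ is associative, the sub-bimodule generated by $x$ inside $\O^a$ is $\O x \cong \O$, so $N = \O x$. Transferring through the isomorphism $\phi : \O \to \O x$, $q \mapsto qx$, yields $x \cdot p = \phi(1)\cdot p = \phi(p) = px$. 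For general $m = \sum q_i x_i \in \O\huaa{M}$, the bimodule identity $[q_i,x_i,p] = 0$ gives $m\cdot p = \sum q_i(x_i\cdot p) = \sum q_i(px_i) = \sum(q_ip)x_i$, determining the right action on all of $M$.

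For necessity, suppose $M$ carries a compatible bimodule structure but $\hua{A}{-}{M} \neq 0$. Picking a nonzero $z \in \hua{A}{-}{M}$, the sub-bimodule $N_z$ generated by $z$ is finite-dimensional and inherits the bimodule structure from $M$; its left-module decomposition has a nontrivial $\overline{\O}$ component (generated by $z$ itself), contradicting Theorem~\ref{thm:bimod on On+O-m}. Hence $\hua{A}{-}{M} = 0$, i.e.\ $M = \O\huaa{M}$. The main obstacle I anticipate is establishing the finite-dimensionality of the sub-bimodule generated by a single element; once this is in place, the remainder is a straightforward reduction to the finite-dimensional classification already proved.
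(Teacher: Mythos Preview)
Your overall strategy---localizing to the finite-dimensional sub-bimodule generated by a single element and then invoking Theorems~\ref{thm:bimod on O^n} and~\ref{thm:bimod on On+O-m}---is sound and genuinely different from the paper's argument. The paper does not pass to a sub-bimodule; instead it fixes a global basis $\{\epsilon_i\}_{i\in\Lambda_1}\cup\{\epsilon_\alpha\}_{\alpha\in\Lambda_2}$ of $M$ coming from the left-module classification $M\cong(\bigoplus_{\Lambda_1}\O)\oplus(\bigoplus_{\Lambda_2}\overline{\O})$, writes the putative right multiplication in these coordinates, and re-runs the computations of the finite-dimensional theorems verbatim: Lemma~\ref{lem:f=0 f(xq)=qf(x)} kills the cross terms $f_{j\beta},f_{\beta j}$, and the diagonal $\overline{\O}$-blocks are reduced (via the substitution $g_{\beta\beta}(p;x)=\overline{f_{\beta\beta}(p;\overline{x})}$) to a bimodule structure on $\overline{\O}$, contradicting Lemma~\ref{lem:simple bimod is O}. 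Your route is conceptually cleaner, being a genuine reduction rather than a repeated calculation; the paper's route avoids having to prove the auxiliary finiteness lemma you flag as the main obstacle.

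One correction to your sketch of that lemma: the identity you quote, $(xp)q = x(pq) + [p,q]\,x$, is false in a general bimodule---the correction term is $[x,p,q]=[p,q,x]$, not $[p,q]\,x$, so your dimension count of $72$ does not stand. A clean replacement is this. Let $N_L=\generat{x}_\O$ be the \emph{left} submodule generated by $x$, so $\dim_\R N_L\le 128$ by Lemma~\ref{lem:<m> is finite dim}. For $y\in N_L$ and $p,q\in\O$ the bimodule axiom gives
\[
q(yp)=(qy)p-[q,y,p]=(qy)p-[p,q,y],\qquad (yp)q=y(pq)+[y,p,q]=y(pq)+[p,q,y],
\]
and $[p,q,y]=(pq)y-p(qy)\in N_L$. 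Hence $\sum_{i=0}^{7} N_L\,e_i$ is closed under both left and right multiplication and is therefore a sub-bimodule, of real dimension at most $8\cdot 128$, containing $x$. With this lemma in hand your necessity and uniqueness arguments go through exactly as you describe.
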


We first prove a simple lemma  which will be used later.
\begin{lemma}\label{lem:f=0 f(xq)=qf(x)}
	Let $f\in \End_{\R}(\O)$. If  it holds $f(xq)=qf(x)$ for all $q,x\in \O$, then $f=0$.
\end{lemma}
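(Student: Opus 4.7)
The plan is to reduce the problem to a constraint on the single octonion $a := f(1)$. First I would set $x = 1$ in the hypothesis to obtain $f(q) = q f(1) = qa$ for every $q \in \O$. Hence $f$ is completely determined by $a$, and it suffices to show $a = 0$. Substituting the formula $f(x) = xa$ back into the assumption gives the identity
\begin{equation*}
(xq)a = q(xa) \qquad \forall\, x, q \in \O.
\end{equation*}

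Next I would rewrite this identity using the definition of the associator. Since $q(xa) = (qx)a - [q,x,a]$, the identity becomes
\begin{equation*}
[q,x]\,a = [q,x,a] \qquad \forall\, q,x \in \O,
\end{equation*}
which couples the commutator of $q$ and $x$ (an imaginary octonion, multiplied by $a$ on the right) to the associator $[q,x,a]$ (which is pure imaginary and vanishes whenever any argument is real).

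To extract information on $a$, I would specialize to basis vectors $q = e_i$, $x = e_j$ with $i \neq j$ and expand $a = a_0 + a_k e_k$ using the $\epsilon$-notation from the preliminaries. The left side $2\epsilon_{ijk} e_k a$ contains a real contribution $-2\epsilon_{ijk} a_k$, while the right side $[e_i,e_j,a] = 2 a_m \epsilon_{ijml} e_l$ is pure imaginary. Matching real parts forces $\epsilon_{ijk} a_k = 0$ for all $i \neq j$, and since every basis index $k$ arises as the third vertex of some Fano line through a pair $(i,j)$, this gives $a_k = 0$ for $k = 1,\dots,7$. Thus $a = a_0 \in \R$. Finally, with $a$ real both $[q,x,a] = 0$ and the identity reduces to $[q,x]\,a_0 = 0$; choosing any $q,x$ with $[q,x] \neq 0$ forces $a_0 = 0$, so $a = 0$ and $f \equiv 0$.

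The routine is essentially a bookkeeping exercise with the $\epsilon$-identities; the only mildly subtle step is recognizing that the hypothesis $f(xq) = qf(x)$ is not directly covered by Lemma \ref{lem:almost linear} (which treats left multiplication on the argument), so the reduction $f(x) = xa$ must be done by hand via $x = 1$ rather than imported from that lemma. After that, the main obstacle, if any, is merely organizing the associator/commutator manipulation cleanly so that the real-part comparison produces the conclusion $a \in \R$ in one shot.
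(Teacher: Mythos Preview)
Your argument is correct. The paper, however, takes a different and much shorter route: it defines $g(x):=f(\overline{x})$ and observes that
\[
g(px)=f(\overline{x}\,\overline{p})=\overline{p}\,f(\overline{x})=\overline{p}\,g(x),
\]
which is exactly the hypothesis of the earlier Lemma~\ref{lem:f=0 f(px)=overline(p)x}; hence $g=0$ and so $f=0$. In other words, the paper recognizes that the present lemma is a conjugate reformulation of a result already proved, and dispatches it in two lines. Your approach instead works from scratch: you determine $f(x)=xa$ via $x=1$, derive the identity $[q,x]\,a=[q,x,a]$, and then run essentially the same real-part comparison that the paper used inside the proof of Lemma~\ref{lem:f=0 f(px)=overline(p)x}. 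Both are valid; the paper's version is more economical because it reuses existing machinery, while yours is self-contained and makes the algebraic obstruction (the clash between the commutator term, which has a real part, and the associator, which does not) completely explicit.
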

\begin{proof}
	This is a simple deformation of Lemma \ref{lem:f=0 f(px)=overline(p)x}. We define $g(x):=f(\overline{x})$, then we obtain:
	$$g(px)=f(\overline{x}\overline{p})=\overline{p}f(\overline{x})=\overline{p}g({x}).$$ It thus follows from Lemma \ref{lem:f=0 f(px)=overline(p)x} that $g=0$ and hence $f=0$.
\end{proof}
\begin{proof}[Proof of Theorem \ref{thm:general bimod strc}]
	Suppose $M\cong(\oplus_{i\in \Lambda_1}\spo) \bigoplus (\oplus_{\alpha\in \Lambda_2}\overline{\spo})$. Hence there is a canonical basis $\{\epsilon_i,\epsilon_{\alpha}\}_{i\in \Lambda_1,\alpha\in \Lambda_2}$, such that $\epsilon_i\in \huaa{M}$ and $\epsilon_\alpha\in \hua{A}{-}{M}$ for each $i\in \Lambda_1$ and $\alpha\in \Lambda_2$. We assume there exists an $\O$-bimodule structure and for any $x\in \O$, the right multiplication is supposed to be:
	$$(x\epsilon_i)\cdot p=\sum _{j\in \Lambda_1}f_{ij}(p;x)\epsilon_j+\sum_{\beta\in \Lambda_2}f_{i\beta}(p;x)\epsilon_{\beta};$$
	$$(x\epsilon_{\alpha})\cdot p=\sum _{j\in \Lambda_1}f_{\alpha j}(p;x)\epsilon_j+\sum_{\beta\in \Lambda_2}f_{\alpha\beta}(p;x)\epsilon_{\beta}.$$
	Note that these sums here are all finite sums.
	Therefore, 
	\begin{align*}
	&\left(\sum_{i\in \Lambda_1}x_i\epsilon_i+\sum_{\alpha\in \Lambda_1}x_\alpha\epsilon_{\alpha}\right)\cdot p\\
	=&\sum _{j\in \Lambda_1}\left(\sum _{i\in \Lambda_1}f_{ij}(p;x_i)+\sum _{\alpha\in \Lambda_2}f_{\alpha j}(p;x_{\alpha})\right)\epsilon_j+\sum _{\beta\in \Lambda_2}\left(\sum _{i\in \Lambda_1}f_{i\beta}(p;x_{i})+\sum _{\alpha\in \Lambda_2}f_{\alpha \beta}(p;x_\alpha)\right)\epsilon_\beta.
	\end{align*}
	
Given $m=\sum_{i\in \Lambda_1}x_i\epsilon_i+\sum_{\alpha\in \Lambda_1}x_\alpha\epsilon_{\alpha}$, we compute $[q,m,p]$.
Note that $\epsilon_i\in \huaa{M}$ and $\epsilon_\alpha\in \hua{A}{-}{M}$, which means { for all } $p,q\in \O$, it holds
$$p(q\epsilon_i)=(pq)\epsilon_i, \quad p(q\epsilon_\alpha)=(qp)\epsilon_\alpha$$ for every $i\in \Lambda_1$ and $\alpha\in \Lambda_2$.
Consequently,
\begin{align*}
[q,m,p]&=\left(q\sum_{i\in \Lambda_1}x_i\epsilon_i+q\sum_{\alpha\in \Lambda_1}x_\alpha\epsilon_{\alpha}\right)p-q\sum _{j\in \Lambda_1}\left(\sum _{i\in \Lambda_1}f_{ij}(p;x_i)+\sum _{\alpha\in \Lambda_2}f_{\alpha j}(p;x_{\alpha})\right)\epsilon_j-\\
&\quad  q\sum _{\beta\in \Lambda_2}\left(\sum _{i\in \Lambda_1}f_{i\beta}(p;x_{i})+\sum _{\alpha\in \Lambda_2}f_{\alpha \beta}(p;x_\alpha)\right)\epsilon_\beta\\
&=\sum _{j\in \Lambda_1}\Bigg[\sum_{i\in \Lambda_1}\Big(f_{ij}(p;qx_i)-qf_{ij}(p;x_i)\Big)+\sum_{\alpha\in \Lambda_2}\Big(f_{\alpha j}(p;x_\alpha q)-qf_{\alpha j}(p;x_\alpha)\Big)\Bigg]\epsilon_j+\\
&\quad \sum _{\beta\in \Lambda_2}\Bigg[\sum_{i\in \Lambda_1}\Big(f_{i\beta}(p;qx_i)-f_{i\beta}(p;x_i)q\Big)+\sum_{\alpha\in \Lambda_2}\Big(f_{\alpha \beta}(p;x_\alpha q)-f_{\alpha \beta}(p;x_\alpha)q\Big)\Bigg]\epsilon_\beta.
\end{align*}
As before, we obtain:
\begin{numcases}{}
f_{jj}(p;qx)-qf_{jj}(p;x)=[p,q,x_{j}], &$j\in \Lambda_1$\\
f_{ij'}(p;qx)-qf_{jj'}(p;x)=0, &$j\neq j',\;j,j'\in \Lambda_1$\\
f_{j\beta}(p;qx)-f_{j\beta}(p;x)q=0, &$j\in \Lambda_1, \beta\in \Lambda_2$\label{eq:general case 0}\\
f_{\beta \beta}(p;x q)-f_{\beta \beta}(p;x)q=\dbb{p}{q}{x}, &$\beta\in \Lambda_2$\label{eq:general case 1}\\
f_{\beta \beta'}(p;x q)-f_{\beta \beta'}(p;x)q=0, &$\beta\neq \beta',\; \beta,\beta'\in \Lambda_2$\\
f_{\beta j}(p;x q)-qf_{\beta j}(p;x)=0, &$j\in \Lambda_1, \beta\in \Lambda_2$\label{eq:general case 2}
\end{numcases}
where $x$ is an arbitrary octonion. 

Thanks to Lemma \ref{lem:f=0 f(xq)=qf(x)}, we deduce from the equations \eqref{eq:general case 0} and \eqref{eq:general case 2} that $f_{j\beta}=f_{\beta j}=0$ for all $j\in \Lambda_1, \beta\in \Lambda_2$ as before.
What seems slightly different from before is the equations \eqref{eq:general case 1}. However, if we define 
$$g_{\beta \beta}(p;x)=\overline{f_{\beta\beta}(p;\overline{x})},$$
we then have:
\begin{align*}
g_{\beta \beta}(p;\overline{q}x)-\overline{q}g_{\beta \beta}(p;x)&=\overline{f_{\beta\beta}(p;\overline{x}q)}-\overline{q}\overline{f_{\beta\beta}(p;\overline{x})}\\
&=\overline{f_{\beta\beta}(p;\overline{x}q)-f_{\beta\beta}(p;\overline{x})q}\\
&=\overline{\dbb{p}{q}{\overline{x}}}\\
&=[p,q,x]_{\overline{\O}}.
\end{align*}
Where we have used the equality \eqref{eq:[p,q,x]O-=[p,q,x]-} in the last line.
The rest proof runs completely in the same manner as in Theorem  \ref{thm:bimod on O^n} and Theorem \ref{thm:bimod on On+O-m}.
\end{proof}

%
%
%
%

\subsection{Some consequences}
Let $M$ be an $\O$-bimodule and define the \textbf{communicating center} of $M$ $$\hua{Z}{}{M}:=\{x\in M\mid px=xp, \text{ for all }p\in \O\}.$$ 
Then it turns out that the communicating center is exactly   the set $\huaa{M}$.
\begin{prop}\label{lem:Z(M) in huaa(M)}
	Let $M$ be an $\O$-bimodule. Then $\huaa{M}=\hua{Z}{}{M}$. 	

\end{prop}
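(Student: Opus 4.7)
The plan is to establish the two inclusions separately. For $\huaa{M} \subseteq \hua{Z}{}{M}$, this is essentially a restatement of Theorem \ref{thm:general bimod strc}: for $x \in \huaa{M}$ the theorem asserts $xp = px$ for every $p \in \O$, which is exactly the defining condition for $x$ to lie in the communicating center.

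The nontrivial direction is $\hua{Z}{}{M} \subseteq \huaa{M}$, which I would attack via the structure theorem. Because $M$ admits a bimodule structure, Theorem \ref{thm:general bimod strc} forces $M = \O\huaa{M}$ and, combined with the classification of left $\O$-modules recalled in Section 2 (together with the absence of the conjugate-regular summand in the bimodule case), gives a decomposition $M \cong \bigoplus_i \O \epsilon_i$ as a left $\O$-module for some family $\{\epsilon_i\} \subseteq \huaa{M}$. Applying the identity \eqref{eq:[p,q,r]m+p[q,r,m]=[pq,r,m]-[p,qr,m]+[p,q,rm]} with $m = \epsilon_i$ (and using that all associators ending in $\epsilon_i$ vanish) yields $[p,q,p_i\epsilon_i] = [p,q,p_i]\epsilon_i$, so that an element $\sum p_i \epsilon_i$ is associative iff every $p_i$ lies in the nucleus of $\O$, namely $\R$. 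Thus $\huaa{M} = \bigoplus_i \R\epsilon_i$, and each $m \in M$ has unique $\O$-coefficients in the expansion $m = \sum_i p_i \epsilon_i$.

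Now take $m \in \hua{Z}{}{M}$ and write $m = \sum_i p_i \epsilon_i$ uniquely. The key computation is
\[
(p_i \epsilon_i)p \;=\; p_i(\epsilon_i p) \;=\; p_i(p\epsilon_i) \;=\; (p_i p)\epsilon_i,
\]
where the first equality uses $[p_i,\epsilon_i,p] = 0$ (bimodule alternativity together with $\epsilon_i \in \huaa{M}$), the second uses $\epsilon_i p = p\epsilon_i$ from Theorem \ref{thm:general bimod strc}, and the third uses the associativity of $\epsilon_i$. Hence $mp = \sum_i (p_i p)\epsilon_i$ and, similarly, $pm = \sum_i (pp_i)\epsilon_i$. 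The condition $pm = mp$ for every $p \in \O$ then becomes $\sum_i [p_i,p]\epsilon_i = 0$; uniqueness of the $\O$-coefficients in $\bigoplus_i \O\epsilon_i$ forces $[p_i,p] = 0$ for every $p \in \O$ and every index $i$, whence $p_i \in Z(\O) = \R$ and $m \in \bigoplus_i \R\epsilon_i = \huaa{M}$.

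The most delicate step is the opening one: pinning down the decomposition $M = \bigoplus_i \O\epsilon_i$ with $\huaa{M} = \bigoplus_i \R\epsilon_i$, so that each element of $M$ has well-defined octonionic coefficients. Once this is in place, the rest of the argument is a short calculation inside $\O$ relying only on $N(\O) = Z(\O) = \R$.
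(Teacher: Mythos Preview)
Your argument is correct, but it takes a markedly different route from the paper for the inclusion $\hua{Z}{}{M}\subseteq\huaa{M}$. You invoke the full structure theorem to write $M=\bigoplus_i\O\epsilon_i$ with $\epsilon_i\in\huaa{M}$, identify $\huaa{M}=\bigoplus_i\R\epsilon_i$, and then reduce the commutativity condition to $[p_i,p]=0$ coordinatewise. This works, and the delicate step you flag (the existence of a basis of associative elements with unique $\O$-coefficients) is indeed available from Theorem~\ref{thm:general bimod strc} together with the left-module classification and Lemma~\ref{lem: xishu asselm}.

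The paper's proof, by contrast, is a short direct manipulation using only the bimodule axioms: for $x\in\hua{Z}{}{M}$ one computes
\[
[p,q,x]=(pq)x-p(qx)=x(pq)-p(xq)=(xp)q-[x,p,q]-p(xq)=(px)q-p(xq)-[p,q,x]=-2[p,q,x],
\]
where the first two substitutions use $px=xp$, the next uses the definition of the right associator, and the last uses $[x,p,q]=[p,q,x]$ together with $[p,x,q]=-[p,q,x]$ from alternativity. This gives $3[p,q,x]=0$ immediately. The advantage of the paper's argument is that it never touches the structure theorem or any basis; it is intrinsic to the alternativity of the associator in a bimodule. Your approach, while heavier, has the merit of making the coordinate description of both $\huaa{M}$ and $\hua{Z}{}{M}$ explicit, which is informative in its own right.
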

\begin{proof}
	Let $x\in  \hua{Z}{}{M}$, then for any $p,q\in \spo$, we have
	\begin{align*}
	[p,q,x]&=(pq)x-p(qx)\\
	&=x(pq)-p(xq)\\
	&=(xp)q-[x,p,q]-p(xq)\\
	&=(px)q-p(xq)-[p,q,x]\\
	&=-2[p,q,x].
	\end{align*}
	Thus  $[p,q,x]=0$  for any $p,q\in \spo$ and hence $x\in \huaa{M}$. On the other hand, if $x\in \huaa{M}$, we clearly have $x\in  \hua{Z}{}{M}$ in view of Theorem \ref{thm:general bimod strc}. This proves the proposition.
\end{proof}

In order to avoid any confusion, we use the prefixes $l$- and $r$- to indicate that the modulle under consideration is a left or  right module. For example, let $M$ and $M'$ be two $\spo$-bimodules,  we use $l$-$\Hom_\O(M,M')$ to denote the set of all left homomorphisms over  $\O$-bimodules $M$ and $M'$, similar notation  $r$-$\Hom_\O(M,M')$ for right homomorphisms, and  use $\Hom_\O(M,M')$ to denote the set of bihomomorphisms:
$$\Hom_\O(M,M'):=\{f\in \Hom_\R(M,M')\mid f(px)=pf(x),f(xp)=f(x)p, \text{ for all } x\in M, p\in\O \}.$$
That is, $$\Hom_\O(M,M')=l\text{-}\Hom_\spo(M,M')\cap r\text{-}\Hom_{\spo}(M,M').$$
It turns out that the three sets above are all the same in bimodule case.

\begin{prop}\label{lem:l-Hom=Hom}
	Suppose $M$ and $M'$ are two  $\spo$-bimodules. Then
	\begin{align}
	{l\text{-}\Hom_\spo(M,M')}&=r\text{-}\Hom_{\spo}(M,M')=\Hom_{\spo}(M,M')\label{seteq:huaal(lHom(M,M)=HomO(MM))}
	\end{align}
\end{prop}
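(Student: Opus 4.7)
The plan is to verify the two nontrivial inclusions $l\text{-}\Hom_\O(M,M') \subseteq \Hom_\O(M,M')$ and $r\text{-}\Hom_\O(M,M') \subseteq \Hom_\O(M,M')$, the reverse containments being immediate from the definitions. I will treat the left case in detail; the right case follows by the same argument with the roles swapped, using that $\huaa{M}$ coincides with the communicating center $\hua{Z}{}{M}$ by Proposition \ref{lem:Z(M) in huaa(M)}.

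The first key step is to show that any $f \in l\text{-}\Hom_\O(M,M')$ carries $\huaa{M}$ into $\huaa{M'}$. For $m \in \huaa{M}$ and $p,q \in \O$, left linearity gives
$$[p,q,f(m)] = (pq)f(m) - p(qf(m)) = f((pq)m - p(qm)) = f([p,q,m]) = 0,$$
so $f(m) \in \huaa{M'}$. Next, by Theorem \ref{thm:general bimod strc} we have $M = \O\huaa{M}$, so every $x \in M$ can be written as a finite sum $x = \sum q_i m_i$ with $q_i \in \O$ and $m_i \in \huaa{M}$. For each summand, since all associators with $m_i$ in one slot vanish (the associator is alternating in a bimodule) and $m_i p = p m_i$ by Theorem \ref{thm:general bimod strc}, we obtain
$$(q_i m_i)\cdot p \;=\; q_i(m_i p) \;=\; q_i(p m_i) \;=\; (q_i p)\, m_i.$$
Applying $f$ and using left linearity yields $f(x\cdot p) = \sum (q_i p) f(m_i)$. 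The same chain of identities, carried out in $M'$ and justified by $f(m_i) \in \huaa{M'}$, gives $f(x)\cdot p = \sum (q_i f(m_i))\cdot p = \sum (q_i p) f(m_i)$. Hence $f(x\cdot p) = f(x)\cdot p$, proving $f \in \Hom_\O(M,M')$.

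The right-homomorphism case is entirely symmetric: an analogous associator computation using right linearity shows that any $f \in r\text{-}\Hom_\O(M,M')$ also preserves associative elements, and one then uses the decomposition $M = \huaa{M}\O$ (which coincides with $\O\huaa{M}$ thanks to the centrality of $\huaa{M}$) to reverse the argument. I do not anticipate any serious obstacle; the only point requiring attention is the bookkeeping that lets one move a scalar between the left and the right side across the ``basis'' $\huaa{M}$, which is precisely what Theorem \ref{thm:general bimod strc} and Proposition \ref{lem:Z(M) in huaa(M)} make available.
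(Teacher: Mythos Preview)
Your proof is correct and follows essentially the same approach as the paper: show that a left $\O$-homomorphism preserves associative elements, then use the decomposition $M=\O\huaa{M}$ from Theorem \ref{thm:general bimod strc} together with the centrality of $\huaa{M}$ to push scalars to the other side. The only cosmetic difference is that the paper routes the computation through the identity $(r_i x_i)p = x_i(r_i p)$ and an intermediate statement $f(xp)=f(x)p$ for $x\in\huaa{M}$, whereas you write $(q_i m_i)p = (q_i p)m_i$ directly; the underlying manipulations are the same.
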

\begin{proof}
	Let $f\in {l\text{-}\Hom_\spo(M,M')}$. For any $x\in \huaa{M}$, in view of Theorem \ref{thm:general bimod strc}, we deduce
	$$	f(xp)=f(px)	=pf(x),$$
	Since $x\in \huaa{M}$, we conclude that $f(x)\in \huaa{M'}$  and hence $pf(x)=f(x)p$, namely,
	\begin{align*}
	f(xp)&=f(x)p, \text{ for all } x\in \huaa{M}.\tag{$\star$}
	\intertext{Now for arbitrary $x\in M$, suppose $\{x_j\}_{j\in \Lambda}\subseteq \huaa{M}$ is a basis of $M$, then we can write $x=\sum_{i=1}^n r_ix_i$, where  $r_i\in \O$, $i=1,\dots,n$. It follows that}
	f(xp)&=\sum_{i=1}^n  f(x_i(r_ip))&\text{since $x_i\in \huaa{M} $}\\
	&\xlongequal{(\star)}\sum_{i=1}^n  f(x_i)(r_ip)\\
	&=\sum_{i=1}^n  (f(x_i)r_i)p&\text{since $f(x_i)\in \huaa{M} $}\\
	&\xlongequal{(\star)}f(x)p.
	\end{align*} 
	This means that $f\in r\text{-}\Hom_{\spo}(M,M') $ and thus $f\in \Hom_{\spo}(M,M') $ as desired.
	Therefore, we obtain that $l\text{-}\Hom_{\spo}(M,M')=\Hom_{\spo}(M,M')$, similarly, it also holds $r\text{-}\Hom_{\spo}(M,M')=\Hom_{\spo}(M,M')$. This completes the proof.
\end{proof}
\begin{rem}
Proposition \ref{lem:l-Hom=Hom}  shows there is no difference between left $\spo$-homomorphisms and  $\spo$-bihomomorphisms when $M$ is an bimodule. Therefore it is no  need to consider them separately as in \cite{ludkovsky2007algebras}. What's more, this proposition  actually  shows that  if $M$ and $M'$ are two $\spo$-bimodules such that they are isomorphic as left $\spo$-modules, then they are isomorphic as  $\spo$-bimodules. 	
\end{rem}

\section{The real part of $\spo$-bimodules}

In this section, we shall introduce the structure of  real part on  $\O$-bimodules in a similar way as in quaternion case \cite{ng2007quaternionic}. It turns out the category of $\O$-bimodules is also isomorphic to the category of $\R$-vector spaces.

Let $M$ be an $\O$-bimodule. For any given $x\in M$, since $M=\O \huaa{M}$, 
then for any $m\in M$, we can write $$m=\sum_{i=1}^nr_ix_i, $$ for some $ r_i\in \O$ and some $x_i\in \huaa{M}$.
Let $r_i=r_{i0}+\sum r_{ij}e_j$, $r_{ij}\in \R$ for $j=0,\ldots,7$ and  $i=1,\ldots,n$. Hence the above equality can be  rewritten as  $$m=m_0+\sum_{i=1}^7 e_im_i,$$ for some associative elements $m_j\in \huaa{M},\; j=0,1,\dots, 7.$

\begin{lemma}\label{lem:4[ei,x]=[ei,ej,x]}
	Let $M$ be an $\spo$-bimodule. Then we have the following equations:
	\begin{eqnarray}\label{eq:4[ei,x]=sum eijk[ej,ek,x]}
	4[e_i,x]=\sum_{j,k} \epsilon_{ijk}[e_j,e_k,x],\quad i=1,\ldots,7.
	\end{eqnarray}
\end{lemma}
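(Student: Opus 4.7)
The plan is to use the structure theorem $M=\O\huaa{M}$ to reduce the verification to a computation on a simple spanning set. Both sides of the claimed identity \eqref{eq:4[ei,x]=sum eijk[ej,ek,x]} are $\R$-linear in $x$, and every element of $M$ is an $\R$-linear combination of elements of the form $e_l y$ with $l\in\{0,1,\ldots,7\}$ and $y\in\huaa{M}$. Hence I only need to check the identity on such elements.

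The case $l=0$, i.e., $x=y\in\huaa{M}$, is trivial: the right-hand side vanishes because $[e_j,e_k,y]=0$ by the definition of $\huaa{M}$, and the left-hand side vanishes because Proposition \ref{lem:Z(M) in huaa(M)} gives $y\in\hua{Z}{}{M}$, so $[e_i,y]=0$.

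For $x=e_ly$ with $l\geqslant 1$ and $y\in\huaa{M}$, I compute both sides separately. Since $y$ is associative, $[e_i,e_l,y]=[e_l,e_i,y]=0$ and $e_iy=ye_i$ by Proposition \ref{lem:Z(M) in huaa(M)}; this gives $e_i(e_ly)=(e_ie_l)y$ and $(e_ly)e_i=e_l(ye_i)=e_l(e_iy)=(e_le_i)y$, whence
\[
4[e_i,e_ly]=4[e_i,e_l]y=8\,\epsilon_{ilm}\,e_my.
\]
For the right-hand side, I invoke the identity $[p,q,ry]=[p,q,r]y$ for associative $y$, noted just before the theorem, together with \eqref{eq:e(ijkl)el=[ei,ej,ek]}, to obtain $[e_j,e_k,e_ly]=2\,\epsilon_{jkln}\,e_ny$. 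Then the contraction identity \eqref{eq:contract id eijq eijkl=4eqkl} (after the cyclic rewriting $\epsilon_{ijk}=\epsilon_{jki}$) yields $\epsilon_{ijk}\epsilon_{jkln}=4\epsilon_{iln}$, so
\[
\sum_{j,k}\epsilon_{ijk}[e_j,e_k,e_ly]=2\epsilon_{ijk}\epsilon_{jkln}e_ny=8\,\epsilon_{iln}\,e_ny,
\]
which matches the left-hand side.

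The only real obstacle is bookkeeping with the $\epsilon$-symbol: one must be careful to permute indices into the canonical shape of \eqref{eq:contract id eijq eijkl=4eqkl} before contracting, and to track the factors of $2$ coming from \eqref{eq:e(ijkl)el=[ei,ej,ek]} and from the definition $[e_i,e_l]=2\epsilon_{ilm}e_m$. Everything else is a direct application of the structure theorem and the vanishing of associators on $\huaa{M}$.
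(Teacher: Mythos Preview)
Your proof is correct and follows essentially the same route as the paper: decompose $x$ along the basis $\{e_l\}$ with associative coefficients via $M=\O\huaa{M}$, then use $[e_j,e_k,e_l]y=[e_j,e_k,e_ly]$ together with $\huaa{M}\subseteq\hua{Z}{}{M}$ and the contraction identity \eqref{eq:contract id eijq eijkl=4eqkl} to match both sides. The paper runs the summation over all $l$ at once while you separate out $l=0$, but the computation is identical.
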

\begin{proof}
	For any given $x\in M$, let
	$$x=x_0+\sum e_ix_i,\quad \text{ where } x_j\in \huaa{M},\ j=0,1,\dots, 7.$$ 
	We first compute $\sum_{j,k} \epsilon_{ijk}[e_j,e_k,x]$. Using Einstein summation convention,
	\begin{align*}
	\sum_{j,k} \epsilon_{ijk}[e_j,e_k,x]&= \epsilon_{ijk}[e_j,e_k,e_mx_m]\\
	&=
	\epsilon_{ijk}[e_j,e_k,e_m]x_m\\
	&\xlongequal{ \eqref{eq:e(ijkl)el=[ei,ej,ek]}}2\epsilon_{ijk}\epsilon_{ikmn}e_nx_m\\
	&\xlongequal{ \eqref{eq:contract id eijq eijkl=4eqkl}}8\epsilon_{imn}e_nx_m.
	\end{align*}
	Whereas
	\begin{align*}
	4[e_i,x]&=4[e_i,e_mx_m]\\
	&=4\left((e_ie_m)x_m-e_m(x_me_i)\right)&\text{$x_m\in \huaa{M}$}\\
	&=4\left(e_ie_m-e_me_i\right)x_m&\text{$x_m\in \hua{Z}{}{M}$}\\
	&=4(\epsilon_{imn}e_n-\epsilon_{min}e_n)x_m\\
	&=8\epsilon_{imn}e_nx_m.
	\end{align*}
	This proves the equations \eqref{eq:4[ei,x]=sum eijk[ej,ek,x]} as desired.
\end{proof}
\begin{cor}
	Let $M$ be an $\O$-bimodule. Then the right multiplication is uniquely determined by its left module structure. More precisely, for any $x\in M$, the right multiplication is given by
	$$xe_i=e_ix-\dfrac{1}{4}\sum_{j,k} \epsilon_{ijk}[e_j,e_k,x],\quad i=1,\ldots,7.$$
\end{cor}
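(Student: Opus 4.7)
The plan is immediate: the corollary is essentially just a rearrangement of Lemma \ref{lem:4[ei,x]=[ei,ej,x]}. First I would unpack the commutator as $[e_i,x] = e_ix - xe_i$, then solve for $xe_i$ in the identity $4[e_i,x] = \sum_{j,k}\epsilon_{ijk}[e_j,e_k,x]$ to obtain
$$xe_i = e_ix - \tfrac{1}{4}\sum_{j,k}\epsilon_{ijk}[e_j,e_k,x].$$
The right-hand side involves only left multiplications on $M$, since each associator $[e_j,e_k,x] = (e_je_k)x - e_j(e_kx)$ is defined purely in terms of the left action. This establishes both the explicit formula and the fact that right multiplication by any imaginary unit is recovered from the left module structure.

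For the uniqueness of the full right multiplication, I would extend by $\R$-linearity using Remark \ref{rem:def of alg-mod}: for an arbitrary octonion $p = p_0 + \sum_{i=1}^7 p_ie_i$ with $p_j \in \R$, one has $x\cdot p = p_0 x + \sum_{i=1}^7 p_i(x\cdot e_i)$ together with $x\cdot 1 = x$. Combining this with the formula above, the right action of every $p \in \O$ on any $x \in M$ is completely determined by the left $\O$-module structure.

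Since the substantive work has already been carried out in Lemma \ref{lem:4[ei,x]=[ei,ej,x]}, there is no real obstacle here; the corollary is a direct transcription of that identity into a statement about the right action. It is worth noting that this result, together with Theorem \ref{thm:general bimod strc}, gives the sharpest form of the claim announced in the introduction: not only is the compatible bimodule structure unique when it exists, but one has an explicit closed form expressing the right action in terms of left multiplication alone.
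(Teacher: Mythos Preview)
Your proposal is correct and matches the paper's approach exactly: the paper states this corollary immediately after Lemma~\ref{lem:4[ei,x]=[ei,ej,x]} with no separate proof, treating it as the obvious rearrangement of that identity. Your additional remark on extending to general $p\in\O$ by $\R$-linearity is a reasonable clarification but not something the paper spells out.
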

\begin{rem}
	This lemma gives a new proof of  the fact that $\huaa{M}\subseteq \hua{Z}{}{M}$.
\end{rem}

We wish to define a real part structure on $\O$-bimodule $M$, which plays a role of the real number in $\O$.  Recall in the quaternion setting, Ng gives a  structure of real part on a quaternion bimodule $X$  as follows (\cite{ng2007quaternionic}):
$$\re x=\frac{1}{4}\sum _{e\in B}\overline{e}xe \quad (x\in X).$$
Where $B := \{1,i, j, k\}$ is a basis of the quaternions $\mathbb{H}$.

In the octonion case, it turns out that there is a   real part structure on $\O$-bimodules as well. We define the \textbf{real part} for an arbitrary  bimodule $M$ as  follows: $$\vre x:=\frac{5}{12}x-\frac{1}{12}\sum_{i=1}^7 e_ixe_i\quad (x\in M).$$ 
The operator $\re: M\rightarrow M$ is called the \textbf{real part structure} of $M$, and we call an element $m\in M$ \textbf{real}  if $m\in \re M$.
Combining the equations \eqref{eq:4[ei,x]=sum eijk[ej,ek,x]},  we conclude that $\vre x=x+\dfrac{1}{48}\epsilon_{ijk}e_i[e_j,e_k,x]$. It turns out that the subset of all real elements coincides with the   subset $\huaa{M}$ of associative elements. 
\begin{thm}\label{lem:real part on good bimod }
	If $M$ is an $\O$-bimodule, then for all $x\in M$, the following hold:
	\begin{enumerate}
		\item 	$\vre^2x=\vre x$.
		\item  $x=\vre x-\sum e_i\vre(e_ix)$.
		\item $\vre M= \hua{Z}{}{M}=\huaa{M}$.
		\item For all $x\in \re M$, $p\in \O$, we have $\re (px)=(\re p)x$.
		\item $M=\vre M\oplus \bigoplus_{i=1}^7e_i\vre M$. 
	\end{enumerate}
	
\end{thm}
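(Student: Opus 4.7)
The plan is to reduce all five assertions to a single computational lemma: for the canonical decomposition
$$x = m_0 + \sum_{i=1}^7 e_i m_i, \qquad m_j \in \huaa{M},$$
(which exists because $M = \O\huaa{M}$), one has $\re x = m_0$. Everything then follows by inspection.

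To prove $\re x = m_0$, I would work from the rewritten formula $\re x = x + \tfrac{1}{48}\epsilon_{ijk}e_i[e_j,e_k,x]$ displayed just before the statement. By $\R$-linearity of $\re$, it is enough to evaluate each summand. The associative term $m_0$ kills every associator, so $\re m_0 = m_0$. For a summand $e_l m_l$ with $m_l \in \huaa{M}$, identity \eqref{eq:e(ijkl)el=[ei,ej,ek]} and $[e_j,e_k,e_l m_l] = [e_j,e_k,e_l]m_l$ give $[e_j,e_k,e_l m_l] = 2\epsilon_{jkln}e_n m_l$. Applying the contraction $\epsilon_{ijk}\epsilon_{jkln} = 4\epsilon_{iln}$ (a consequence of \eqref{eq:contract id eijq eijkl=4eqkl} and cyclic symmetry of the three-index $\epsilon$), then expanding $e_i e_n = \epsilon_{inp}e_p - \delta_{in}$ and collapsing via the contraction $\sum_{i,n}\epsilon_{iln}\epsilon_{inp} = -6\delta_{lp}$, one finds that the correction term equals precisely $-e_l m_l$. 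Hence $\re(e_l m_l) = 0$ and $\re x = m_0$ as claimed.

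With this key identification in hand, the five parts fall out quickly. Part $(iii)$: the inclusion $\re M \subseteq \huaa{M}$ is immediate from $\re x = m_0 \in \huaa{M}$, while the reverse holds because any $m \in \huaa{M}$ kills every associator and is therefore fixed by $\re$; the equality $\huaa{M} = \hua{Z}{}{M}$ is Proposition \ref{lem:Z(M) in huaa(M)}. Part $(i)$: $\re x \in \huaa{M}$ is fixed by $\re$. Part $(ii)$: expand $e_i x$ using $e_i(e_l m_l) = (e_i e_l)m_l = (\epsilon_{ilk}e_k - \delta_{il})m_l$ (valid since $m_l$ is associative), regroup in canonical form, and read off that the associative component of $e_i x$ is $-m_i$; thus $\re(e_i x) = -m_i$ and
$$\re x - \sum_{i=1}^7 e_i \re(e_i x) = m_0 + \sum_{i=1}^7 e_i m_i = x.$$
Part $(iv)$: for $x \in \huaa{M}$ the canonical form has $m_0 = x$ and $m_i = 0$, so $\re(e_i x) = 0$; writing $p = \re p + \sum p_i e_i$, distributivity of left multiplication together with $\R$-linearity of $\re$ gives $\re(px) = (\re p)x$. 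Part $(v)$: existence follows from $M = \O\huaa{M}$ combined with $(iii)$, and uniqueness follows from $(ii)$ — if $m_0 + \sum e_i m_i = 0$ with all $m_j \in \huaa{M}$, applying $\re$ gives $m_0 = 0$ and then the formula $\re(e_i x) = -m_i$ applied to $x = 0$ forces each $m_i = 0$.

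The only genuine obstacle is the bookkeeping in the central $\epsilon$-calculation that produces $\re(e_l m_l) = 0$; once that identity is in place, the theorem reduces to parsing the canonical decomposition and repeatedly extracting associative components.
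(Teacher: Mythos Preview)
Your proof is correct, and it takes a genuinely different route from the paper's.

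The paper proves the five assertions largely independently and in the order stated. Assertion~(i) is established by a direct expansion of $\re^2 x$ using the two-sided definition $\re x=\tfrac{5}{12}x-\tfrac{1}{12}\sum e_ixe_i$, Moufang identities, and a fairly lengthy $\epsilon$-reduction (roughly twenty lines). Assertion~(ii) is another direct Moufang-based computation of $\re(e_ix)$ and of $\re x-\sum e_i\re(e_ix)$. Assertion~(iii) is obtained by checking that $[e_j,\re x]=0$ reduces to Lemma~\ref{lem:4[ei,x]=[ei,ej,x]}. Only for assertion~(iv) does the paper use the one-sided formula $\re x=x+\tfrac{1}{48}\epsilon_{ijk}e_i[e_j,e_k,x]$ to compute $\re(e_mx)=0$ for $x\in\re M$; this is essentially the same $\epsilon$-contraction you perform.

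Your strategy is to promote this single calculation---the one the paper uses for~(iv)---to the front: prove $\re(e_\ell m_\ell)=0$ for $m_\ell\in\huaa{M}$ once, combine it with $\re m_0=m_0$, and conclude that $\re$ simply extracts the associative component $m_0$ from any decomposition $x=m_0+\sum e_im_i$. All five assertions then become immediate structural observations rather than separate computations. This buys you a considerably shorter and more transparent argument: in particular, the paper's long direct proofs of~(i) and~(ii) are replaced by one-line consequences of ``$\re x=m_0$''. The paper's approach, by contrast, is more self-contained in that it works straight from the defining two-sided formula without first passing through the canonical decomposition, but at the cost of repeated heavy calculations. Your handling of uniqueness in~(v) is also clean: the key lemma applies to \emph{any} decomposition, so $0=m_0+\sum e_im_i$ forces $m_0=\re 0=0$ and $m_i=-\re(e_i\cdot 0)=0$, with no circularity.
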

\begin{proof}
	We prove a\asertion{1}. The proof is straightforward. 
	\begin{align*}
	\vre^2x&=\vre(\frac{5}{12}x-\frac{1}{12}\sum e_ixe_i)\\
	&=\frac{5}{12}(\frac{5}{12}x-\frac{1}{12} e_ixe_i)-\frac{1}{12}e_j(\frac{5}{12}x-\frac{1}{12} e_ixe_i)e_j\\
	&=\frac{5^2}{12^2}x-\frac{10}{12^2}e_ixe_i+\frac{1}{12^2}e_j(e_ixe_i)e_j.
	\end{align*}
	Using Moufang identities, we obtain 
	\begin{align*}
	e_j(e_ixe_i)e_j&=\Big(\big((e_je_i)x\big)e_i\Big)e_j\\
	&=\Big(\big((\epsilon_{jik}e_k-\delta_{ji})x\big)e_i\Big)e_j\\
	&=\big((\epsilon_{jik}e_k-\delta_{ji})x\big)(e_ie_j)+[(\epsilon_{jik}e_k-\delta_{ji})x,e_i,e_j]\\
	&=(\epsilon_{jik}e_kx-\delta_{ji}x)(\epsilon_{ijm}e_m-\delta_{ji})+\epsilon_{jik}[e_kx,e_i,e_j]\\
	&=-6\delta_{km}(e_kx)e_m+7x+\epsilon_{jik}[e_i,e_j,e_kx]
	\end{align*}
	where  we have used identity \eqref{eq:ep(ijk)ep(ijl)=6delta(kl)} in the last line. Note that 
	\begin{align*}
	\epsilon_{jik}[e_kx,e_i,e_j]&=	\epsilon_{jik}[e_i,e_j,e_kx]\\
	&=\epsilon_{jik}([e_i,e_j,e_k]x+e_i[e_j,e_k,x]-[e_ie_j,e_k,x]+[e_i,e_je_k,x])&\text{by identity \eqref{eq:[p,q,r]m+p[q,r,m]=[pq,r,m]-[p,qr,m]+[p,q,rm]}}\\
	&=\epsilon_{jik}(2\epsilon_{ijkm}e_mx+e_i[e_j,e_k,x]-\epsilon_{ijm}[e_m,e_k,x]+\epsilon_{jkm}[e_i,e_m,x])\\
	&=\epsilon_{jik}e_i[e_j,e_k,x]&\text{note that $\epsilon_{jik}\epsilon_{ijkm}=0$}\\
	&=-4e_i[e_i,x]\\
	&=\sum_{i=1}^7-4e_i(e_ix-xe_i)\\
	&=28x+4e_ixe_i,
	\end{align*}
	where we have used the equations \eqref{eq:4[ei,x]=sum eijk[ej,ek,x]}.
	Hence 
	\begin{align*}
	\vre^2x&=\frac{5^2}{12^2}x-\frac{10}{12^2}e_ixe_i+\frac{1}{12^2}(-6\delta_{km}(e_kx)e_m+7x+28x+4e_ixe_i)\\
	&=\frac{5}{12}x-\frac{1}{12}\sum e_ixe_i\\
	&=\vre x.
	\end{align*}
	This proves a\asertion{1}.
	
	We prove a\asertion{2}. 
	First using Moufang identities again, we get 
	\begin{align*}
	e_j(e_ix)e_j=(e_je_i)(xe_j)=\epsilon_{jik}e_k(xe_j)-xe_i.
	\end{align*}
	Hence
	\begin{align*}
	\vre(e_ix)&=\frac{5}{12}e_ix-\frac{1}{12}e_j(e_ix)e_j\\
	&=\frac{5}{12}e_ix-\frac{1}{12}(\epsilon_{jik}e_k(xe_j)-xe_i)
	\end{align*}
	and 
	\begin{align*}
	\vre x-\sum e_i\vre(e_ix)&=\frac{5}{12}x-\frac{1}{12}\sum e_ixe_i-e_i\big(\frac{5}{12}e_ix-\frac{1}{12}(\epsilon_{jik}e_k(xe_j)-xe_i)\big)\\
	&=\frac{40}{12}x-\frac{2}{12}e_ixe_i+\frac{1}{12}\epsilon_{jik}e_i\big(e_k(xe_j)\big)
	\end{align*}
	Similar as above,
	\begin{align*}
	\epsilon_{jik}e_i\big(e_k(xe_j)\big)&=\epsilon_{jik}\big((e_ie_k)(xe_j)-[e_i,e_k,xe_j]\big)\\
	&=\epsilon_{jik}\big((\epsilon_{ikm}e_m-\delta_{ik})(xe_j)-[xe_j,e_i,e_k]\big)\\
	&=6\delta_{jm}e_m(xe_j)-\epsilon_{jik}[x,e_j,e_i]e_k\\
	&=6e_ixe_i-4[e_k,x]e_k\\
	&=2e_ixe_i-28x.
	\end{align*}
	Hence 
	\begin{align*}
	\vre x-\sum e_i\vre(e_ix)=x.
	\end{align*}
	
	We prove a\asertion{3}. 
	\begin{align*}
	\vre x\in \hua{Z}{}{M}&\iff e_j \vre x=(\vre x) e_j, \ j=1,\ldots,7\\
	&\iff \frac{5}{12}[e_j,x]=\frac{1}{12}\left((e_ixe_i)e_j-e_j(e_ixe_i)\right), \ j=1,\ldots,7\\
	&\iff \frac{5}{12}[e_j,x]=\frac{1}{12}\left(e_i(x(e_ie_j))-((e_je_i)x)e_i\right), \ j=1,\ldots,7\\
	&\iff 4[e_j,x]=\sum_{i,k} \epsilon_{jik}[e_i,e_k,x],\quad j=1,\ldots,7.
	\end{align*}
	Then it follows from  Lemma \ref{lem:4[ei,x]=[ei,ej,x]} that  $\vre M\subseteq\hua{Z}{}{M}$. On the other hand,   for any $x\in \hua{Z}{}{M}$, $\vre x=\frac{5}{12}x-\frac{1}{12}\sum e_ixe_i=x$ and thus $\hua{Z}{}{M}\subseteq \vre M$. Hence $\vre M= \hua{Z}{}{M}=\huaa{M}$.
	This proves a\asertion{3}.
	
	We prove a\asertion{4}. Note that $\vre M=\huaa{M}$ and $\vre x=x+\dfrac{1}{48}\epsilon_{ijk}e_i[e_j,e_k,x]$, we conclude for every $x\in \vre M$, $m=1,\dots,7$,
	\begin{align*}
	\vre(e_mx)&=e_mx+\dfrac{1}{48}\epsilon_{ijk}e_i[e_j,e_k,e_m]x\\
	&=e_mx+\dfrac{1}{48}\epsilon_{ijk}\cdot 2\epsilon_{jkmn}(\epsilon_{inl}e_l-\delta_{in})x\\
	&=e_mx+\dfrac{1}{6}\epsilon_{mni}\epsilon_{inl}e_lx\\
	&=e_mx-\dfrac{1}{6}\cdot 6\delta_{ml}e_lx\\
	&=0
	\end{align*}
	where we have used identities \eqref{eq:ep(ijk)ep(ijl)=6delta(kl)} and \eqref{eq:contract id eijq eijkl=4eqkl}. This proves a\asertion{4}.
	
		 A\asertion{5} follows from a\asertion{2} and a\asertion{4 } directly. 
\end{proof}
An immediate consequence is a concrete form of the subset of associative elements, which only depends on the left multiplication.
\begin{cor}
	Let $M$ be an $\O$-bimodule, then we have $$\huaa{M}=\left\lbrace x+\dfrac{1}{48}\epsilon_{ijk}e_i[e_j,e_k,x] \middle|\; x\in M  \right\rbrace  .$$
\end{cor}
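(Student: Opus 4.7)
The plan is to observe that the corollary is essentially a restatement of assertion $(3)$ of Theorem \ref{lem:real part on good bimod }, once we identify the expression $x+\frac{1}{48}\epsilon_{ijk}e_i[e_j,e_k,x]$ with $\re x$. So the proof is extremely short: there is no new computation to perform, only a chain of equalities to cite.

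First I would recall the two equivalent formulas for the real part established just before Theorem \ref{lem:real part on good bimod }: by definition $\re x = \tfrac{5}{12}x - \tfrac{1}{12}\sum e_i x e_i$, and using Lemma \ref{lem:4[ei,x]=[ei,ej,x]} (equations \eqref{eq:4[ei,x]=sum eijk[ej,ek,x]}) this can be rewritten in terms of the left multiplication as $\re x = x + \tfrac{1}{48}\epsilon_{ijk}e_i[e_j,e_k,x]$. Therefore the set on the right hand side of the corollary is exactly $\re M$, the image of the real part operator acting on $M$.

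Then I would invoke assertion $(3)$ of Theorem \ref{lem:real part on good bimod }, which asserts $\re M = \hua{Z}{}{M} = \huaa{M}$. Chaining the two identifications gives
\[
\left\lbrace x+\tfrac{1}{48}\epsilon_{ijk}e_i[e_j,e_k,x] \,\middle|\, x\in M  \right\rbrace = \re M = \huaa{M},
\]
which is the desired equality.

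There is no real obstacle here; the content was already packed into Theorem \ref{lem:real part on good bimod } and Lemma \ref{lem:4[ei,x]=[ei,ej,x]}. The only point worth emphasizing, which justifies stating the result as a separate corollary, is the structural conclusion: the subset $\huaa{M}$ of associative elements is described purely in terms of left multiplication by octonions (through the associator $[e_j,e_k,x]=(e_je_k)x-e_j(e_kx)$), with no reference to the right module structure. This fits the theme, stressed earlier in the paper, that the bimodule structure of an $\O$-module is entirely determined by its left module structure.
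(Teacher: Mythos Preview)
Your proposal is correct and matches the paper's own treatment: the paper presents this corollary as an immediate consequence of Theorem \ref{lem:real part on good bimod }(iii) together with the formula $\re x = x+\tfrac{1}{48}\epsilon_{ijk}e_i[e_j,e_k,x]$ derived from Lemma \ref{lem:4[ei,x]=[ei,ej,x]}, and gives no separate proof. Your final remark about the expression depending only on the left module structure is exactly the point the paper highlights.
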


For any given $m\in M$, it follows from the above theorem that 
there exsits a unique decomposition: 
$$m=m_0+\sum e_im_i,$$
where $ m_j=\vre(\overline{e_i}x)\in \huaa{M}$, for all $ j=0,1,\dots, 7$.   We call $m_0=\vre m$ the \textbf{real part} of $m$. 
 $\re M$ is the unique real vector space (up to isomorphism) whose octonionization, $\re M\otimes \O$, is isomorphic to $M$ as $\O$-bimodule. 
  We can show that the properties of this real part structure are almost the same with that in octonion. We assemble some elementary properties now.
  \begin{prop}
  	Let $M$ be an $\O$-bimodule. Then for all $p,q\in \O$, for all $x\in M$, we have 
  	\begin{enumerate}
  		
  		\item $\re [p,q,x]=0$;
  		\item $\re [p,x]=0$;
  		\item $\re (pq)x=\re (qx)p=\re x(pq)$.
  	\end{enumerate}
  \end{prop}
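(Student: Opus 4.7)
The strategy is to exploit the decomposition $M=\re M\oplus\bigoplus_{i=1}^{7}e_i\re M$ from Theorem \ref{lem:real part on good bimod }(5). Writing $x=\sum_{i=0}^{7}e_ix_i$ with $x_i\in\huaa{M}=\hua{Z}{}{M}$, each $x_i$ is simultaneously associative (so every associator containing it vanishes) and central (so it commutes with every octonion), and these two facts are what drive the whole argument.

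For assertion (1), the general identity \eqref{eq:[p,q,r]m+p[q,r,m]=[pq,r,m]-[p,qr,m]+[p,q,rm]} collapses, for $m=x_i\in\huaa{M}$, to $[p,q,rx_i]=[p,q,r]x_i$, so
\begin{equation*}
[p,q,x]=\sum_{i=0}^{7}[p,q,e_i]x_i.
\end{equation*}
Each coefficient $[p,q,e_i]$ is a pure imaginary octonion, hence of the form $\sum_{j=1}^{7}c_{ij}e_j$ with $c_{ij}\in\R$. By Theorem \ref{lem:real part on good bimod }(4), $\re(e_jx_i)=(\re e_j)x_i=0$ for $j\ge 1$, and therefore $\re[p,q,x]=\sum_{i,j}c_{ij}\re(e_jx_i)=0$.

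For assertion (2), the vanishing of $[p,e_i,x_i]$ and $[e_i,p,x_i]$ combined with the centrality $e_ix_i=x_ie_i$ yields $p(e_ix_i)=(pe_i)x_i$ and $(e_ix_i)p=(e_ip)x_i$, whence $[p,e_ix_i]=[p,e_i]x_i$. Summing gives $[p,x]=\sum_{i=0}^{7}[p,e_i]x_i$, and the same pure-imaginary argument as above produces $\re[p,x]=0$.

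Assertion (3) is then a direct consequence of (1) and (2), together with the total skew-symmetry of the associator in any alternative bimodule (so in particular $[x,p,q]=[p,q,x]$). Expanding
\begin{equation*}
(pq)x=p(qx)+[p,q,x],\qquad x(pq)=(xp)q-[x,p,q],
\end{equation*}
and taking real parts, assertion (1) kills every associator term, while assertion (2) applied to each of the commutators $[p,qx]$, $[q,xp]$, $[p,qx]$ yields the chain
\begin{equation*}
\re((pq)x)=\re(p(qx))=\re((qx)p),\qquad \re(x(pq))=\re((xp)q)=\re(q(xp))=\re((qx)p).
\end{equation*}
No step presents a serious obstacle; the only delicate point is tracking, at each rearrangement, which associator or commutator vanishes, which is pure bookkeeping given Proposition \ref{lem:Z(M) in huaa(M)} and the associator-alternativity of $\O$.
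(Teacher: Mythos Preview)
Your proof is correct and follows essentially the same route as the paper: decompose $x=\sum e_ix_i$ with $x_i\in\huaa{M}$, push the associator and commutator onto the octonionic coefficients, and then invoke Theorem~\ref{lem:real part on good bimod }(4) to kill the real part of any pure imaginary times an associative element. The only minor difference is in assertion~(2): the paper derives it from assertion~(1) via Lemma~\ref{lem:4[ei,x]=[ei,ej,x]} (expressing $[e_i,x]$ as a combination of associators), whereas you compute $[p,x]=\sum_i[p,e_i]x_i$ directly from centrality and associativity of the $x_i$ and then repeat the pure-imaginary argument. Both arguments are equally short; yours is perhaps more self-contained since it does not appeal to the $\epsilon$-identity lemma.
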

\begin{proof}
	Suppose $x=x_0+\sum e_ix_i$, $x_j\in \re M$ for $j=0,1,\dots,7.$
	Then in view of identity \eqref{eq:[p,q,r]m+p[q,r,m]=[pq,r,m]-[p,qr,m]+[p,q,rm]} and a\asertion{4 } in Theorem \ref{lem:real part on good bimod }, we obtain
	\begin{align*}
	\re [p,q,x]=\re [p,q,\sum_0^7 e_ix_i]=\sum_0^7\re ([p,q, e_i]x_i)=\sum_0^7(\re [p,q, e_i]) x_i=0.
	\end{align*}
	This proves a\asertion{1}. A\asertion{2} follows from the a\asertion{1} that we just proved and Lemma \ref{lem:4[ei,x]=[ei,ej,x]}. We prove a\asertion{3}.
	Thanks to a\asertion{2}, we get $$\re(pq)x=\re(pq)x-\re [pq,x]=\re x(pq),$$
	and $$\re (pq)x=\re(pq)x-\re [p,q,x]= \re p(qx)=\re(qx)p.$$
	This proves the proposition.
\end{proof}
  Similar as  in quaternion case (\cite{ng2007quaternionic}), we can prove the following lemma. The proof is much the same and  is omitted here.
\begin{lemma}
	Let $X$ and $Y$ be two $\O$-bimodules. If $f\in \Hom_{\O} (X,Y)$, then $f|_{\re(X)}\in \Hom_{\R}\big(\re X,\re Y\big)$. This induces an $\R$-linear isomorphism $\Psi$ from $\Hom_{\O} (X,Y)$ onto $\Hom_{\R}\big(\re X,\re Y\big)$.
	
\end{lemma}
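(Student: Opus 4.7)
The plan is to establish the lemma by constructing an explicit two-sided inverse of $\Psi$. Three tasks have to be carried out: check that $\Psi$ is well-defined (i.e. $f|_{\re X}$ actually lands in $\re Y$), verify injectivity, and produce a surjection by extending $\R$-linear maps on $\re X$ to $\O$-bihomomorphisms on $X$. The main conceptual input is the canonical decomposition $X = \re X \oplus \bigoplus_{i=1}^7 e_i \re X$ from assertion~$(v)$ of Theorem~\ref{lem:real part on good bimod } together with Proposition~\ref{lem:l-Hom=Hom}, which reduces bihomomorphisms to left homomorphisms.

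First I would show well-definedness. Since $\re y = \tfrac{5}{12} y - \tfrac{1}{12}\sum e_i y e_i$ is given purely in terms of left and right $\O$-multiplication and $\R$-scalars, any $f\in \Hom_{\O}(X,Y)$ satisfies $f(\re x) = \re f(x)$, so $f(\re X)\subseteq \re Y$. The restriction $f|_{\re X}$ is obviously $\R$-linear, and the assignment $f \mapsto f|_{\re X}$ is $\R$-linear in $f$. Injectivity is then immediate: if $f|_{\re X}=0$, then for any $m = m_0 + \sum_{i=1}^7 e_i m_i$ with $m_j\in \re X$ we have $f(m) = f(m_0) + \sum e_i f(m_i) = 0$, using only that $f$ is a left $\O$-homomorphism.

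For surjectivity, given $g\in \Hom_{\R}(\re X,\re Y)$ I define $f\colon X\to Y$ via the unique decomposition of assertion~$(v)$: if $m = m_0 + \sum_{i=1}^7 e_i m_i$ with $m_j\in \re X$, set
$$f(m) := g(m_0) + \sum_{i=1}^7 e_i g(m_i).$$
Well-definedness is automatic from uniqueness of the decomposition, and $f$ is $\R$-linear. By Proposition~\ref{lem:l-Hom=Hom} it suffices to check that $f$ is a left $\O$-homomorphism, and by $\R$-linearity this reduces to checking $f(e_j m) = e_j f(m)$ for each $j$ and each summand $m \in \re X$ or $m = e_i x$ with $x \in \re X$. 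The first case is trivial since $e_j x\in e_j\re X$ is already in standard form. For the second, using $x\in \huaa{X}\subseteq \hua{Z}{}{X}$ we compute
$$e_j(e_i x) = (e_j e_i)x = \epsilon_{jik}e_k x - \delta_{ji}x,$$
so $f(e_j(e_i x)) = \epsilon_{jik}e_k g(x) - \delta_{ji}g(x)$ by definition; on the other hand $g(x)\in \huaa{Y}$ gives $e_j(e_i g(x)) = \epsilon_{jik}e_k g(x) - \delta_{ji}g(x)$ by the same identity in $Y$, and the two expressions agree.

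The potentially delicate step is this last verification that the naive extension $f$ respects left multiplication by arbitrary octonions, not just by the generators $e_i$; I expect this to be the main (though routine) obstacle. It is handled by noting that associative elements of $X$ and $Y$ lie in the respective centres $\hua{Z}{}{X}$ and $\hua{Z}{}{Y}$ (Proposition~\ref{lem:Z(M) in huaa(M)}), so all octonion identities needed on the two sides of $f(p(qm_i)) = p(q f(m_i))$ collapse to identities in $\O$ itself. Once $f$ is known to be a left $\O$-homomorphism, Proposition~\ref{lem:l-Hom=Hom} upgrades it to a bihomomorphism, and $\Psi(f) = g$ by construction, finishing the proof.
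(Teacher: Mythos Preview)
Your argument is correct and supplies precisely what the paper omits: the paper gives no proof of this lemma, saying only that it runs ``much the same'' as the quaternion case treated by Ng. Your route---using the formula for $\re$ to see $f(\re X)\subseteq\re Y$, the direct-sum decomposition of Theorem~\ref{lem:real part on good bimod }(v) for injectivity, extending $g$ summand-wise and checking the left-module property on basis elements $e_j$ acting on summands $x$ and $e_ix$, then invoking Proposition~\ref{lem:l-Hom=Hom}---is the expected one. One cosmetic remark: in the identity $e_j(e_ix)=(e_je_i)x$ and in your closing paragraph you appeal to centrality $x\in\hua{Z}{}{X}$, but what you are actually using is associativity $x\in\huaa{X}$ (the vanishing of $[e_j,e_i,x]$); since $\huaa{X}=\hua{Z}{}{X}$ this is harmless, but associativity is the property doing the work.
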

In exactly the same way as  quaternion case, it holds:
\begin{thm}
	The category of $\O$-bimodules is isomorphic to the category of $\R$-vector spaces.
\end{thm}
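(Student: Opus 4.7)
The plan is to construct explicit functors in both directions and verify they are mutually inverse, leveraging everything established in the preceding sections.

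Define $F:\O\text{-}\mathbf{Bimod}\to\R\text{-}\mathbf{Vect}$ on objects by $F(M):=\re M$ and on morphisms by $F(f):=f|_{\re M}$. That this is well-defined and functorial is exactly the content of the preceding lemma (if $f\in\Hom_{\O}(X,Y)$ then $f|_{\re X}\in\Hom_{\R}(\re X,\re Y)$, and the assignment is $\R$-linear and isomorphic on hom-sets). In the opposite direction, define $G:\R\text{-}\mathbf{Vect}\to\O\text{-}\mathbf{Bimod}$ by $G(V):=V\otimes_{\R}\O$, where the bimodule structure is $p\cdot(v\otimes q):=v\otimes(pq)$ and $(v\otimes q)\cdot p:=v\otimes(qp)$ on simple tensors, extended $\R$-linearly. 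One checks directly from the alternativity of $\O$ that this satisfies the defining associator identities for an $\O$-bimodule, and $G(g):=g\otimes\id_{\O}$ is evidently an $\O$-bihomomorphism whenever $g$ is $\R$-linear.

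The first verification is $FG\cong\id_{\R\text{-}\mathbf{Vect}}$. For $V\in\R\text{-}\mathbf{Vect}$, I would check that $v\otimes 1$ lies in $\huaa{G(V)}$ (since associators involving the real element $1$ vanish), and that $\re(v\otimes q)=v\otimes\re q$ by directly applying the defining formula $\re x=\tfrac{5}{12}x-\tfrac{1}{12}\sum e_ixe_i$ together with the identity $\tfrac{5}{12}q-\tfrac{1}{12}\sum e_iqe_i=\re q$ in $\O$ itself (this last is the classical quaternion-style identity for the octonions, readily checked on the basis $\{1,e_1,\ldots,e_7\}$). Hence the map $v\mapsto v\otimes 1$ gives a natural $\R$-linear isomorphism $V\xrightarrow{\sim}\re G(V)$.

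The second verification is $GF\cong\id_{\O\text{-}\mathbf{Bimod}}$. For an $\O$-bimodule $M$, define $\Phi_M:\re M\otimes_{\R}\O\to M$ by $x\otimes p\mapsto px$ (equivalently $xp$, by the commuting-center property in Proposition \ref{lem:Z(M) in huaa(M)}). This is an $\O$-bihomomorphism: left $\O$-linearity follows because $x\in\huaa{M}$ makes $q(px)=(qp)x$; right $\O$-linearity follows from the same associativity together with the identity $xp=px$ on $\re M$. Surjectivity follows from the structural identity $M=\O\huaa{M}$ of Theorem \ref{thm:general bimod strc}; injectivity is exactly the direct-sum decomposition $M=\re M\oplus\bigoplus_{i=1}^{7}e_i\re M$ of Theorem \ref{lem:real part on good bimod }, assertion $(\mathrm{v})$. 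Naturality in $M$ is immediate from the definitions.

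The main obstacle I expect is not any single computation but rather keeping track of which identifications are being made: in particular, one must verify that the bimodule structure on $\re M\otimes\O$ transported via $\Phi_M$ agrees with the original structure on $M$, which is where the uniqueness clause in Theorem \ref{thm:general bimod strc} is essential (the right action is forced by the left action). Once functoriality of $F$ and $G$ is set up and the two natural isomorphisms $v\mapsto v\otimes 1$ and $x\otimes p\mapsto px$ are in hand, the categorical isomorphism follows formally.
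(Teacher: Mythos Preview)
Your proposal is correct and follows essentially the same route the paper has in mind: the paper gives no detailed proof, writing only ``In exactly the same way as quaternion case,'' and the standard argument in that case is precisely the functors $M\mapsto\re M$ and $V\mapsto V\otimes_{\R}\O$ together with the hom-set isomorphism of the preceding lemma, which you invoke. Your write-up in fact supplies more detail than the paper does; the only minor point is that what you produce is an \emph{equivalence} of categories (natural isomorphisms $FG\cong\id$, $GF\cong\id$) rather than a strict isomorphism, but this matches the quaternion result the paper is citing and is clearly what is intended.
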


\begin{rem}
	The notion of real part on an $\O$-bimodule will play a crucial role in the development of octonionic functional analysis. We hope to indicate some of these applications in subsequent papers.
\end{rem}
\section{Submodules generated by one element}
The submodule $\left\langle m\right\rangle _{\O}$ generated  by one point  will be very different  from the classical case.  As is known that $\O m$ is not always a submodule  and the submodule generated by one element may be the whole module \cite{goldstine1964hilbert}. 
 We introduce the notion of cyclic element and \decomp\    to describe this phenomenon.

\subsection{Cyclic elements}
Let $M$ be a left $\spo$-module only in this subsection. 
We collect some basic properties  and lemmas first.

\begin{mydef}\label{def:cir,ass}
	An element 	$m\in M$ is said to be \textbf{cyclic} if $\left\langle m\right\rangle _\O=\O m$.
	Denote by  $\huac{M}$ the set of all cyclic elements in $M$.
\end{mydef}

The following lemma is crucial to set up the structure of $\huac{M}$.
\begin{lemma}\label{lem:x in huac(M)=dim =8}
	Let $x$ be any given nonzero element in $M$. Then $$x\in \huac{M}\iff \mathrm{dim}_\spr \generat{x}_\spo=8\iff \generat{x}_\spo\cong \spo \text{ or } \overline{\spo}.$$
\end{lemma}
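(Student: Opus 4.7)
The plan is to prove the chain of equivalences by first establishing that $\O x$ is always $8$-dimensional whenever $x \neq 0$, then comparing $\O x$ with $\generat{x}_\O$, and finally invoking the classification of finite-dimensional left $\O$-modules from \cite{liyong2019octonionmodule}.

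The first step is to show $\dim_\spr \O x = 8$ for every nonzero $x \in M$. Since $\O x$ is spanned over $\spr$ by $\{x, e_1 x, \ldots, e_7 x\}$, it suffices to show that $rx = 0$ with $r \in \O$ and $x \neq 0$ forces $r = 0$. Here I would exploit the fact that $\bar r = 2\re r - r$ lies in the real span of $1$ and $r$, so left alternativity (or Artin's theorem) gives $[\bar r, r, x] = 0$. Consequently $\bar r(rx) = (\bar r r)x = |r|^2 x$, so $rx = 0$ combined with $r \neq 0$ forces $x = 0$, contradicting our hypothesis.

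With this in hand, the first equivalence is essentially immediate from the definition of cyclic: $x \in \huac{M}$ means $\O x = \generat{x}_\O$, and since $\O x \subseteq \generat{x}_\O$ always holds with $\dim_\spr \O x = 8$, equality of the two subspaces is equivalent to the larger one having dimension $8$ as well. For the remaining equivalence, I would appeal to the structure theorem for left $\O$-modules: every finite-dimensional left $\O$-module is isomorphic to $\O^n \oplus \overline{\O}^m$, hence has real dimension $8(n+m)$. When $\dim_\spr \generat{x}_\O = 8$, necessarily $n + m = 1$, so $\generat{x}_\O \cong \O$ or $\overline{\O}$. The converse is trivial, since both irreducibles have real dimension $8$.

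The only mildly delicate point is the first step, which hinges on the identity $\bar r(rx) = |r|^2 x$ in an alternative module; once this faithfulness of the left action on a nonzero element is in place, everything else is bookkeeping against previously established classification results.
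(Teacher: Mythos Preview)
Your argument is correct, and it takes a slightly different (and in some ways tidier) path than the paper's.

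The paper argues the cycle $x\in\huac{M}\Rightarrow\dim_\spr\generat{x}_\spo=8\Rightarrow\generat{x}_\spo\cong\spo$ or $\overline{\spo}\Rightarrow x\in\huac{M}$. For the first implication it notes $\generat{x}_\spo=\spo x$ gives $\dim\leqslant 8$, and then invokes the classification of finite-dimensional $\spo$-modules to force $\dim\geqslant 8$. For the last implication it works explicitly with an isomorphism $\varphi\colon\generat{x}_\spo\to\spo$ (resp.\ $\overline{\spo}$): writing $\varphi(x)=p$, $\varphi(m)=q$, one computes $m=(qp^{-1})x$ (resp.\ $m=\overline{qp^{-1}}\,x$), so $\generat{x}_\spo=\spo x$.

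Your route instead establishes $\dim_\spr\spo x=8$ directly from the identity $\bar r(rx)=|r|^2x$ (valid in any left alternative $\spo$-module since $\bar r\in\spr+\spr r$), and then both directions of the first equivalence collapse to a single dimension comparison between $\spo x\subseteq\generat{x}_\spo$. This bypasses the explicit isomorphism manipulation entirely and does not need the classification theorem to obtain the lower bound $\dim\geqslant 8$. The second equivalence is handled the same way in both proofs, via the structure theorem for finite-dimensional left $\spo$-modules. What the paper's explicit isomorphism argument buys is a concrete formula for the coefficient $p$ with $m=px$; your approach trades that for brevity.
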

\begin{proof}
	Let $x\in \huac{M}$, then $\generat{x}_\spo=\spo x$ and hence $\mathrm{dim}_\spr \generat{x}_\spo\leqslant8$. On the other hand, since $\generat{x}_\spo$ is a nonzero  $\spo $-module of finite dimension, thus $\mathrm{dim}_\spr \generat{x}_\spo\geqslant8$, therefore $\mathrm{dim}_\spr \generat{x}_\spo=8$, this means  $ \generat{x}_\spo$ is a simple $\spo$-module and hence $\generat{x}_\spo\cong \spo \text{ or } \overline{\spo}$. Suppose $\generat{x}_\spo\cong \spo \text{ or } \overline{\spo}$. Assume $\generat{x}_\spo\cong \spo $ first. Let $\varphi$ denote an isomorphism: $\varphi:\generat{x}_\spo\rightarrow \spo.$ For any $m\in \generat{x}_\spo$, suppose $\varphi(x)=p,\ \varphi(m)=q$. Then
	$$\varphi(m)=q=qp^{-1}p=qp^{-1}\varphi(x)=\varphi((qp^{-1})x),$$
	according to that $\varphi $ is isomorphism, we  get $m=(qp^{-1})x$, hence $\generat{x}_\spo=\spo x$ which means $x\in \huac{M}$.  If $\generat{x}_\spo\cong \overline{\spo }$,  still let $\varphi$ denote the isomorphism: $\generat{x}_\spo\rightarrow \overline{\spo }.$ For any $m\in \generat{x}_\spo$, suppose $\varphi(x)=p,\ \varphi(m)=q$. Then
	$$\varphi(m)=q=\overline{(qp^{-1})}\hat{\cdot}p=\overline{(qp^{-1})}\hat{\cdot}\varphi(x)=\varphi(\overline{(qp^{-1})}x),$$
	then we  get $m=\overline{(qp^{-1})}x$, hence  $x\in \huac{M}$.
\end{proof}


According to the above lemma, we define $$\huacc{+}{M}:=\{x\in \huac{M}\mid \generat{x}_\spo\cong \spo \}\cup\{0\},$$  $$\huacc{-}{M}:=\{x\in \huac{M}\mid \generat{x}_\spo\cong \overline{\spo}\}\cup\{0\}.$$ Therefore
$\huac{M}=\huacc{+}{M}\cup\huacc{-}{M}.$ 
We shall show that all the cyclic elements are determined by the associative subset $\huaa{M}$ and the conjugate associative subset $\hua{A}{-}{M}$.

\begin{thm}\label{lem:huac+(M)}
	Let $M$ be a left $\spo$-module, then:
	\begin{enumerate}
		\item $\huacc{+}{M}=\bigcup_{p\in \spo}p\cdot \huaa{M}$;
		\item  $\huacc{-}{M}=\bigcup_{p\in \spo}p\cdot \hua{A}{-}{M}$.
	\end{enumerate}
\end{thm}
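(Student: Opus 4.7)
The plan is to prove both equalities by showing the two inclusions separately, using the structural description of irreducible left $\O$-modules provided by Lemma \ref{lem:x in huac(M)=dim =8}. For the inclusion $\supseteq$, I would argue that any element of the form $py$ with $y\in\huaa{M}$ (respectively $y\in\hua{A}{-}{M}$) is already cyclic of the correct type. The key observation is that if $y\in\huaa{M}$, then for any $q,r\in\O$ one has $r(qy)=(rq)y$, hence $\O(py)=\O y$ (using that $\O$ is a division algebra and that $q\mapsto qp$ is bijective when $p\ne 0$) and moreover $r(s(py))=(rs)(py)\in\O(py)$, so $\O(py)=\generat{py}_\O$. The map $\O\to \O y$, $q\mapsto qy$, is then a left $\O$-module isomorphism onto $\generat{py}_\O$, so $py\in\huacc{+}{M}$. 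The case $y\in\hua{A}{-}{M}$ is handled analogously: from $(qr)y=r(qy)$ one deduces $\O(py)=\O y$ is a submodule isomorphic to $\overline{\O}$ via $q\mapsto \bar{q}y$, so $py\in\huacc{-}{M}$.

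For the inclusion $\subseteq$ — the main content — take any nonzero $x\in\huacc{+}{M}$ and pick, by Lemma \ref{lem:x in huac(M)=dim =8}, an isomorphism of left $\O$-modules $\varphi:\O\to \generat{x}_\O$. Set $y:=\varphi(1)$. Since $\varphi$ is a left module homomorphism one has $\varphi(a)=a y$ for every $a\in\O$, and the associator transports as $[p,q,\varphi(a)]=\varphi([p,q,a]_\O)$ where $[p,q,a]_\O$ is the algebra associator of $\O$. Thus $\varphi(a)\in\huaa{M}$ precisely when $a$ lies in the nucleus of $\O$, which is exactly $\R$. In particular $y=\varphi(1)\in\huaa{M}$, and writing $x=\varphi(a)=ay$ for the appropriate $a\in\O$ displays $x$ in the required form. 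For the case $x\in\huacc{-}{M}$ one does the same with an iso $\psi:\overline{\O}\to\generat{x}_\O$; because left multiplication on $\overline{\O}$ is $p\,\hat{\cdot}\,z=\bar{p}z$, one computes $\psi(a)=\bar{a}y$ with $y=\psi(1)$, and the transported condition for conjugate associativity of $y$ reduces to $1\in\O$ having vanishing standard associator with all pairs $(\bar{q},\bar{p})$, which again holds because the nucleus of $\O$ is $\R$. Hence $y\in\hua{A}{-}{M}$ and $x=\bar{a}y\in\bigcup_{p}p\cdot\hua{A}{-}{M}$.

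The main obstacle is bookkeeping on the $\overline{\O}$-side: one must be careful that the isomorphism $\psi$ is of \emph{left} $\O$-modules while the multiplication on $\overline{\O}$ is twisted by conjugation, so identities like ``$y$ is conjugate associative'' must be extracted from the algebra identity $[\bar{p},\bar{q},1]_\O=0$ rather than by formal manipulation in the module. Once this bookkeeping is done, the argument is essentially the transfer of the nucleus of $\O$ under $\varphi$ and $\psi$, together with the remark that $\generat{x}_\O=\O x$ for cyclic $x$. The zero element is trivially included in both descriptions, so the equalities follow.
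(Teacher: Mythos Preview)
Your approach is essentially the paper's: for $\supseteq$ you exhibit the isomorphism $q\mapsto qy$ (resp.\ $q\mapsto \bar q y$) onto $\generat{py}_\O$, and for $\subseteq$ you pull back along an isomorphism $\varphi:\O\to\generat{x}_\O$ (resp.\ $\psi:\overline{\O}\to\generat{x}_\O$), set $y=\varphi(1)$, and read off $y\in\huaa{M}$ from $[p,q,1]=0$. That is exactly what the paper does.

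There is, however, one slip in your $\supseteq$ argument. For $y\in\huaa{M}$ and $p\notin\R$ the identity $r(s(py))=(rs)(py)$ that you assert is \emph{false}: since $y$ is associative one has $r(s(py))=(r(sp))y$ and $(rs)(py)=((rs)p)y$, and these differ by $[r,s,p]y$, which is generically nonzero. Fortunately you do not need this identity. You already established $\O(py)=\O y$; now simply observe that $\O y$ itself is a submodule because $r(qy)=(rq)y\in\O y$ for all $r,q$, hence $\generat{py}_\O=\O(py)$. (The analogous remark applies on the $\overline{\O}$ side, where the needed closure is $r(qy)=(qr)y\in\O y$.) With this correction your proof goes through and matches the paper's.
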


\begin{proof}
	We prove a\asertion{1}.  We first show $\bigcup_{p\in \spo}p\cdot \huaa{M}\subseteq\huacc{+}{M}$.  Given any $x\in \huaa{M}$. \bfs\ $x\neq 0$.
	Define a map $\phi:\generat{x}_\spo\rightarrow \spo$ such that $ \phi( px)= p$ for $p\in \spo$.
	This is a homomorphism in $\Hom_\spo(\generat{x}_\spo, \spo)$, since $$\phi(q(px))=\phi((qp)x)=qp=q\phi(px).$$
 Thus $\generat{x}_\spo\cong \spo$. This proves $x\in \huacc{+}{M}$. Because $px\in \generat{x}_\spo$ and $x=p^{-1}(px)\in \generat{px}_\spo$ for $p\neq 0$, that is, $\generat{x}_\spo=\generat{px}_\spo$ whenever $p\neq 0$. This implies $\bigcup_{p\in \spo}p\cdot \huaa{M}\subseteq\huacc{+}{M}$.
	On the contary, let $0\neq x\in \huacc{+}{M}$, hence there is an isomorphism $\phi\in \Hom_\spo(\spo,\generat{x}_\spo)$. Suppose $\phi (1)=y\in \generat{x}_\spo$, since $\phi$ is an isomorphism, choose $0\neq r\in \spo$ such that $y=rx$. Note that $[p,q,y]=\phi[p,q,1]=0$ for all $p,q \in \O$, we thus get
	  $y=\phi (1)\in \huaa{\generat{x}_\spo}\subseteq \huaa{M}$, and hence $x=r^{-1}y\in \bigcup_{p\in \spo}p\cdot \huaa{M}$. This proves a\asertion{1}.
	Similarly we can prove a\asertion{2}.	
\end{proof}

The following lemma will be useful later. The proof can be found in \cite{liyong2019octonionmodule}.
\begin{lemma}\label{lem: xishu asselm}
	Let $ \{x_i\}_{i=1}^n$ be an  $\spr$-linearly independent set of \asselm s of  $M$, then $ \{x_i\}_{i=1}^n$ is also  $\spo$-linearly independent. Further if $y=\sum_{i=1}^nr_ix_i\in \huaa{M}$, then $r_i\in \spr  \text{ for each }i\in \{1,\dots, n\}.$
\end{lemma}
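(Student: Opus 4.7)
The plan is to expand each octonionic coefficient $r_i$ along the real basis $1,e_1,\dots,e_7$ and exploit the direct-sum decomposition of $\spo\huaa{M}$ that follows from the structure theorem for left $\spo$-modules (\cite{liyong2019octonionmodule}). Write $r_i = \sum_{j=0}^7 r_{i,j}e_j$ with $r_{i,j}\in\spr$. Since the action is $\spr$-linear in the scalar, I can regroup:
$$\sum_{i=1}^n r_i x_i \;=\; y_0 + \sum_{j=1}^7 e_j y_j, \qquad y_j := \sum_{i=1}^n r_{i,j}\, x_i.$$
As $r_{i,j}\in\spr$, $x_i\in\huaa{M}$, and $\huaa{M}$ is an $\spr$-subspace of $M$, each $y_j$ lies in $\huaa{M}$. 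Both conclusions will drop out once I know the real sum $\huaa{M}+e_1\huaa{M}+\cdots+e_7\huaa{M}$ inside $M$ is a \emph{direct} sum.

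For directness, I invoke the structure theorem: $\spo\huaa{M}$ is isomorphic, as a left $\spo$-module, to $\bigoplus_{\alpha}\spo\epsilon_\alpha$ where $\{\epsilon_\alpha\}$ is any $\spr$-basis of $\huaa{M}$, and each regular summand $\spo\epsilon_\alpha\cong\spo$ admits $\{e_j\epsilon_\alpha\}_{j=0}^7$ as an $\spr$-basis. Reading this basis off by the index $j$ shows that $\spo\huaa{M}=\bigoplus_{j=0}^7 e_j\huaa{M}$ as real vector spaces, which is exactly the directness needed. (Equivalently: an element $e_j z$ with $z\in\huaa{M}$ is associative only when $j=0$, because for $j\geq 1$ one has $[p,q,e_jz]=[p,q,e_j]z$ by identity \eqref{eq:[p,q,r]m+p[q,r,m]=[pq,r,m]-[p,qr,m]+[p,q,rm]}, and this forces $z=0$ after choosing $p,q$ so that $[p,q,e_j]\neq 0$.)

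Given directness, the first assertion is immediate: $\sum r_ix_i=0$ yields $y_0+\sum_{j\geq 1}e_jy_j=0$, hence $y_j=0$ for all $j$; and the $\spr$-linear independence of $\{x_i\}$ then forces $r_{i,j}=0$ for every $i,j$, so each $r_i=0$. For the second assertion, if $y=y_0+\sum_{j\geq 1}e_jy_j\in\huaa{M}$, then $y-y_0\in\huaa{M}\cap\bigl(\bigoplus_{j\geq 1}e_j\huaa{M}\bigr)=\{0\}$, and a second application of directness gives $e_jy_j=0$ for each $j\geq 1$. Because $y_j\in\huaa{M}$, Artin's theorem yields $e_j(e_jy_j)=e_j^2 y_j=-y_j$, so $y_j=0$ for $j\geq 1$; consequently $r_{i,j}=0$ for $j\geq 1$ and all $i$, i.e.\ $r_i=r_{i,0}\in\spr$. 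The only non-trivial ingredient is the directness step, which rests entirely on the earlier structure theorem; the remainder is bookkeeping with real coefficients.
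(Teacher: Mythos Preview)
The paper does not supply its own proof of this lemma; it simply defers to \cite{liyong2019octonionmodule}.  Your argument is correct and is the natural one once the structure theorem for left $\spo$-modules is available: from $M\cong(\bigoplus_{\alpha}\spo)\oplus(\bigoplus_{\beta}\overline{\spo})$ one computes $\huaa{\spo}=\spr$ and $\huaa{\overline{\spo}}=0$, whence $\spo\huaa{M}=\bigoplus_{j=0}^{7}e_j\huaa{M}$ as real vector spaces, and both assertions follow by the coefficient bookkeeping you wrote out.

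Two small remarks.  First, the parenthetical labelled ``Equivalently'' is not actually equivalent to the directness claim: showing that $e_jz\in\huaa{M}$ with $z\in\huaa{M}$ forces $z=0$ (for $j\ge 1$) only gives $e_j\huaa{M}\cap\huaa{M}=\{0\}$ for each individual $j$, not the full eightfold direct sum.  So the structure-theorem route is the one carrying the argument; the parenthetical is a useful sanity check but not a replacement.  (If you want a self-contained alternative, the left-multiplication real-part operator $x\mapsto x+\tfrac{1}{48}\epsilon_{ijk}e_i[e_j,e_k,x]$ from Section~4 projects $\sum_j e_jy_j$ onto $y_0$ and thereby proves directness without invoking the structure theorem.)  Second, since both this lemma and the structure theorem are attributed to the same earlier paper, you should verify that the structure theorem there is obtained independently---via the $\clifd{7}$ correspondence and semisimplicity, as summarized in the preliminaries---and does not itself rely on the present lemma; otherwise the appeal would be circular.
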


\subsection{The \decomp}

In this subsction, we will introduce a notion of \textbf{\decomp}. With the help of this concept, we will formulate the structure of the submodule generated by one element.  Let $M$ be an $\O$-bimodule throughout this subsection. It follows from Theorem \ref{thm:general bimod strc} and Theorem \ref{lem:huac+(M)} that $M=\text{Span}_\R\huac{M}$.

For any nonzero element $m\in \huac{M}$,  Theorem assures that \ref{lem:huac+(M)} that there exists an octonion $p\in \O$ such that $m=px_m$ for some associative element $x_m\in \huaa{M}$. 
Such an octonion $p$ is called a \textbf{\tezzh} of $m$ and the real vector $x_m$ is called a \textbf{\tezh} of $m$. 

If we have another $p'\in\O$ and $x'\in \huaa{M}$, such that $m=p'x'$. Note that $m\neq 0$, we thus  obtain $$x_m=p^{-1}(p'x')=(p^{-1}p')x'.$$
Thanks to Lemma \ref{lem: xishu asselm}, we conclude $p^{-1}p'\in \R$. That is, viewed $\O$ as the real vector space $\R^8$, the vectors $p$ and $p'$ are parallel.  
This induces a map $$\sigma\colon\huac{M}\to \mathbb{R}P^8 \qquad m\mapsto [p].$$



\begin{mydef}\label{def:decomp}
	Let $m\in M$ be any given element. Let $m=\sum _{i=1}^n m_i=\sum _{i=1}^n r_ix_i$ be a decomposition of cyclic elements, where $\{x_i\}_{i=1}^n$ and $\{r_i\}_{i=1}^n$ are the 
	collection of corresponding  \tezh s and  corresponding  \tezzh s respectively.
	Then it  is called a \textbf{\decomp} of $m$ if it satisfies:
	\begin{enumerate}
		\item 
		 $\{x_i\}_{i=1}^n$ is $\R$-linearly independent;
		\item 
		 $\{r_i\}_{i=1}^n$ is $\R$-linearly independent in $\O$.
	\end{enumerate}
\end{mydef}

The following lemma guarantees the existence of \decomp.
\begin{lemma}
		Each element in an $\O$-bimodule has a \decomp.
\end{lemma}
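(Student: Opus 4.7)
The plan is to begin from any representation of $m$ as a finite $\O$-linear combination of associative elements, $m = \sum_{i=1}^n r_i x_i$ with $r_i\in\O$ and $x_i\in\huaa{M}$, which exists because $M = \O\huaa{M}$ by Theorem~\ref{thm:general bimod strc}. Each summand $m_i := r_i x_i$ lies in $\huacc{+}{M}\subseteq\huac{M}$ by Theorem~\ref{lem:huac+(M)}, and whenever $m_i\neq 0$ the vector $x_i$ is a \tezh\ and $r_i$ a \tezzh\ of $m_i$. The case $m=0$ is handled by the empty sum, so I assume $m\neq 0$ and choose such a representation with $n$ as small as possible.

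I claim any minimal representation is automatically a \decomp. Suppose, for contradiction, that $\{x_i\}_{i=1}^n$ is $\R$-linearly dependent, say $x_n = \sum_{i<n} c_i x_i$ with $c_i\in\R$. Using that real scalars associate through the module action (Remark~\ref{rem:def of alg-mod}), one has $r_n x_n = \sum_{i<n}(c_i r_n)x_i$, and hence $m = \sum_{i<n}(r_i + c_i r_n)x_i$, a representation with only $n-1$ terms, contradicting minimality. Similarly, if $\{r_i\}_{i=1}^n$ is $\R$-linearly dependent, say $r_n = \sum_{i<n} d_i r_i$ with $d_i\in\R$, then the identity $r_i(d_i x_n)=(d_i r_i)x_n$ gives $r_n x_n = \sum_{i<n} r_i(d_i x_n)$, so $m = \sum_{i<n} r_i(x_i + d_i x_n)$. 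Since $\huaa{M}$ is a real subspace, each $x_i + d_i x_n$ still lies in $\huaa{M}$, again producing a shorter representation and contradicting minimality.

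Finally, I need every $m_i = r_i x_i$ to be a genuine (nonzero) cyclic element so that its \tezh\ and \tezzh\ are well defined. Real-linear independence forces every $r_i$ and $x_i$ to be nonzero; since $\O x_i$ is isomorphic to the regular module $\O$ by Theorem~\ref{lem:huac+(M)} and Lemma~\ref{lem:x in huac(M)=dim =8}, the map $p\mapsto p x_i$ is injective, and hence $m_i\neq 0$. The only point of care --- and the closest thing to an obstacle --- is the validity of the two reductions in the non-associative setting, but each reduces to the fact that real scalars commute and associate with all octonion products and with the left action on $M$, so no deeper Moufang-type manoeuvre is required.
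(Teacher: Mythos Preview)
Your proposal is correct and takes essentially the same approach as the paper: both start from a representation $m=\sum_{i=1}^n r_i x_i$ with $x_i\in\huaa{M}$ (using $M=\O\huaa{M}$) and apply exactly the same two reductions---absorbing a dependent $x_n$ into the remaining terms by adjusting the $r_i$, and absorbing a dependent $r_n$ by adjusting the $x_i$---to force both families to be $\R$-linearly independent. Your minimality framing is marginally cleaner than the paper's iterative reduction, and your explicit verification that each $m_i\neq 0$ (so that \tezh s and \tezzh s are actually defined) fills a small gap the paper leaves implicit, but substantively the arguments are the same.
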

\begin{proof}
	The proof will be divided into three steps. Let $m$ be any given element in $M$.
	\begin{step }
		There is a collection $\{m_i\}_{i=1}^n\subseteq\huac{M}$, such that $m=\sum_{i=1}^nm_i$.
	\end{step }
	This follows from the fact that $M=\text{{Span}}_\R\huac{M}$.
\begin{step }
	We can assume the collection of corresponding  \tezh s $\{x_i\}_{i=1}^n$ is $\R$-linearly independent.
	
\end{step }
	By Step 1, we can assume $m=\sum_{i=1}^nm_i$, and $m_i=r_ix_i$ for $i=1,\ldots,n$.	Suppose that $\{x_i\}_{i=1}^n$ is $\R$-linearly dependent.
\bfs $$\sum_{i=1}^{n-1}t_ix_i=x_n, \quad t_i\in \R\text{ for }i=1,\ldots,n-1.$$
Set $$r_i'=r_i+t_ir_n,\; x_i'=x_i,\; m_i'=r_i'x_i', \qquad \text{for } i=1,\ldots,n-1.$$
Obviously $m_i'\in \huac{M}$ with corresponding  \tezh s $x_i'$ and \tezzh s $r_i'$. It is easy to verify that $$\sum_{i=1}^{n-1}m_i'=\sum_{i=1}^{n-1}(r_i+t_ir_n)x_i=\sum_{i=1}^{n-1}r_ix_i+\sum_{i=1}^{n-1}t_ir_nx_i=\sum_{i=1}^{n}m_i=m.$$
Note that $\{x_i'\}_{i=1}^{n-1}$ is a subset of $\{x_i\}_{i=1}^n$ and hence we can preceed in this way until the collection of  corresponding  \tezh s turns into  an $\R$-linearly independent set.

\begin{step }
	 We can assume the collection of corresponding  \tezzh s $\{r_i\}_{i=1}^n$ is $\R$-linearly independent.
	
\end{step }
	By Step 2, we can assume $m=\sum_{i=1}^nm_i$,  $m_i=r_ix_i$ for $i=1,\ldots,n$,  and $\{x_i\}_{i=1}^n$ is $\R$-linearly independent.
Suppose that $\{r_i\}_{i=1}^n$ is $\R$-linearly dependent.
\bfs $$\sum_{i=1}^{n-1}t_ir_i=r_n, \quad t_i\in \R\text{ for }i=1,\ldots,n-1.$$
Set $$x_i'=x_i+t_ix_n,\; r_i'=r_i,\; m_i'=r_i'x_i', \qquad \text{for } i=1,\ldots,n-1.$$
Obviously $m_i'\in \huac{M}$ with corresponding  \tezh s $x_i'$ and \tezzh s $r_i'$. It is easy to verify that 
$$\sum_{i=1}^{n-1}m_i'=\sum_{i=1}^{n-1}r_i(x_i+t_ix_n)=\sum_{i=1}^{n-1}r_ix_i+\sum_{i=1}^{n-1}t_ir_ix_n=\sum_{i=1}^{n}m_i=m.$$
We claim  $\{x_i'\}_{i=1}^{n-1}$ is also an $\R$-linearly independent set. Indeed, let $\sum_{i=1}^{n-1}s_ix_i'=0$  for some $s_i\in \R,\; i=1,\ldots,n-1$. Then we obtain
$$0=\sum_{i=1}^{n-1}s_i(x_i+t_ix_n)=\sum_{i=1}^{n-1}s_ix_i+(\sum_{i=1}^{n-1}s_it_i) x_n,$$
it then follows from the $\R$-linear independence of  $\{x_i\}_{i=1}^n$ that $s_i=0$ for each $i=1,\ldots,n-1$. This shows that the collection $\{x_i'\}_{i=1}^{n-1}$ is  $\R$-linear independent.
Note that $\{r_i'\}_{i=1}^{n-1}$ is a subset of $\{r_i\}_{i=1}^n$ and hence we can preceed in this way until the collection of corresponding  \tezzh s turns into  an $\R$-linearly independent set.
\end{proof}

\begin{lemma}\label{lem:m1+m2}
	Let $m_i\in \huac{M}$ and $x_i$ be the \tezh s of $m_i$ for $i=1,2$. Then the following hold:
	\begin{enumerate}
		\item if $\{x_1,x_2\}$ is $\R$-linearly dependent, then $m_1+m_2\in \huac{M}$;
		\item if $\{x_1,x_2\}$ is $\R$-linearly independent, then $m_1+m_2\in \huac{M}\iff \sigma(m_1)=\sigma(m_2)$;
		\item if $\{x_1,x_2\}$ is $\R$-linearly independent and $\sigma(m_1)\neq \sigma(m_2)$, then $\generat{m_1+m_2}_\O=\O m_1\oplus\O m_2.$
		
	\end{enumerate}	
\end{lemma}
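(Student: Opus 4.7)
The plan is to treat the three assertions separately, reducing each to structural facts already proved; only assertion (iii) requires a substantively new computation. Throughout I assume $m_1, m_2$ are nonzero so that the characteristic values $r_1, r_2$ and vectors $x_1, x_2$ are all nonzero.

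For (i), the $\R$-linear dependence lets me write $x_2 = tx_1$ with $t \in \R$, so $m_1 + m_2 = (r_1 + tr_2)x_1 \in \O\huaa{M}$; Theorem \ref{lem:huac+(M)} then identifies this as cyclic. The easy direction of (ii) is analogous: if $\sigma(m_1) = \sigma(m_2)$, then $r_1, r_2$ are $\R$-proportional, say $r_1 = tr_2$, whence $m_1 + m_2 = r_2(tx_1 + x_2)$ with $tx_1 + x_2 \in \huaa{M}$, again cyclic by Theorem \ref{lem:huac+(M)}.

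For the forward direction of (ii), assume $m_1 + m_2 \in \huac{M}$. Theorem \ref{thm:general bimod strc} gives $\hua{A}{-}{M} = 0$ in a bimodule, so Theorem \ref{lem:huac+(M)} yields a representation $m_1 + m_2 = rx$ with $r \in \O$ and $x \in \huaa{M}$. The relation $r_1 x_1 + r_2 x_2 - rx = 0$ involves three associative-element terms, and Lemma \ref{lem: xishu asselm} ensures that $\R$-independent associative elements are automatically $\O$-independent; hence $x$ must lie in $\mathrm{span}_\R\{x_1, x_2\}$. Writing $x = ax_1 + bx_2$ with $a, b \in \R$ and matching coefficients against $x_1, x_2$ via their $\O$-independence forces $r_1 = ra$ and $r_2 = rb$ with $a, b$ nonzero, hence $r_1 = (a/b)r_2$, i.e., $\sigma(m_1) = \sigma(m_2)$.

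Assertion (iii) is the main content. Set $N := \O m_1 + \O m_2 = \O x_1 + \O x_2$. Lemma \ref{lem: xishu asselm} makes this sum direct and identifies $N$ with $\O^2$ as a left $\O$-module via $ax_1 + bx_2 \leftrightarrow (a,b)$, under which $m_1 + m_2$ corresponds to $(r_1, r_2)$; the inclusion $\generat{m_1+m_2}_\O \subseteq N$ is immediate. For the reverse inclusion I pass to the normal form $(1, s) = r_1^{-1}(r_1, r_2) \in \generat{m_1+m_2}_\O$ with $s := r_1^{-1}r_2$, and note $s \notin \R$ (this is exactly where the hypothesis $\sigma(m_1) \neq \sigma(m_2)$ enters). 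The crucial identity is $[p, q, (1, s)] = (0, [p, q, s])$: since $s$ lies outside the nucleus $\R$ of $\O$, some $[p_0, q_0, s] \neq 0$, and left-multiplying the resulting element $(0, [p_0, q_0, s])$ by all of $\O$, together with the fact that $\O w = \O$ for any nonzero $w \in \O$, yields the full summand $0 \oplus \O \subseteq \generat{m_1+m_2}_\O$. Subtracting this from the elements $p(1, s) = (p, ps)$ already present then produces $\O \oplus 0$ as well, establishing $N \subseteq \generat{m_1+m_2}_\O$. The only conceptually delicate move is this associator-based extraction, where a single nonzero associator value suffices to recover an entire coordinate summand; the rest is bookkeeping against the existing structural theorems.
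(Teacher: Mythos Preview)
Your argument is correct. Parts (i) and (ii) are essentially identical to the paper's proof; your treatment of the forward implication in (ii) is slightly more elaborate (first showing $x\in\mathrm{span}_\R\{x_1,x_2\}$, then matching coefficients) whereas the paper goes directly via $x=p^{-1}(m_1+m_2)=(p^{-1}r_1)x_1+(p^{-1}r_2)x_2$ and applies Lemma~\ref{lem: xishu asselm}, but the content is the same.

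For (iii) you take a genuinely different route. The paper argues by pure dimension count: part (ii) together with Lemma~\ref{lem:x in huac(M)=dim =8} forces $\dim_\R\generat{m_1+m_2}_\O>8$, while containment in $\O m_1\oplus\O m_2$ gives $\dim_\R\leqslant16$; since every finite-dimensional $\O$-module has real dimension a multiple of $8$, the dimension must be exactly $16$ and the inclusion is an equality. Your proof is instead constructive: you normalise to $(1,s)$ with $s\notin\R$, use the associator identity $[p,q,(1,s)]=(0,[p,q,s])$ to extract a nonzero element of the second coordinate summand, and then recover both summands explicitly. The paper's argument is shorter here, but your associator-extraction is exactly the mechanism the paper itself deploys for the inductive step $n=3$ (and beyond) in the proof of Theorem~\ref{thm:<m>=oplus Omi}, so your version of (iii) has the advantage of previewing the key technique and making the base case uniform with the induction.
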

\begin{proof}
	Suppose 
	  that $m_i=r_ix_i$ for $i=1,2$. 
	
	We prove a\asertion{1}. We can assume $x_1=rx_2$ for some $r\in \R$ by hypothsis, then $m_1+m_2=(r_1r+r_2)x_2\in \bigcup_{p\in \spo}p\cdot \huaa{M}$. Then the conclusion follows by Theorem \ref{lem:huac+(M)}.
	
	We prove a\asertion{2}. Clearly both  $m_1,m_2$ are nonzereo element by hypothesis.  If  $\sigma(m_1)=\sigma(m_2)$, we can assume $r_1=rr_2$ for some $r\in \R$. Hence $$m_1+m_2=rr_2x_1+r_2x_2=r_2(rx_1+x_2)\in \bigcup_{p\in \spo}p\cdot \huaa{M}.$$
	This shows that $m_1+m_2\in \huac{M}$. Now suppose $m_1+m_2\in \huac{M}$. Then we can choose $0\neq p\in\O$ such that $m_1+m_2=px$ for some $x\in\huaa{M}$. It follows that 
	$$x=(p^{-1}r_1)x_1+(p^{-1}r_2)x_2,$$
	in view of Lemma \ref{lem: xishu asselm}, we conclude $p^{-1}r_1\in \R,\; p^{-1}r_2\in\R$, which implies $\sigma(m_1)=\sigma(m_2)$.
	
	We prove a\asertion{3}. Thanks to  a\asertion{2} that we have just proved, we deduce $$\dim_\R(\generat{m_1+m_2}_\O)>8.$$
	However $\generat{m_1+m_2}_\O$ is a submodule included by $\O m_1\oplus\O m_2$, which yields $$\dim_\R(\generat{m_1+m_2}_\O)\leqslant 16.$$This forces $\generat{m_1+m_2}_\O=\O m_1\oplus\O m_2.$		
\end{proof}

%

Now we can describe the structure of these submodules generated by one element.
\begin{thm}\label{thm:<m>=oplus Omi}
			Let $m$ be an arbitrary element of an $\O$-bimodule $M$. Then  $$\generat{m}_\O=\bigoplus_{i=1}^n\O m_i,$$
			where  $\{m_i\}_{i=1}^n\subseteq \huac{M}$  is an arbitrary \decomp\ of $m$.


	
\end{thm}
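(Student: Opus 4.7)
The plan is to verify both inclusions in $\generat{m}_\O = \bigoplus_{i=1}^n \O m_i$ directly. The inclusion $\generat{m}_\O \subseteq \bigoplus_{i=1}^n \O m_i$ will be immediate once I confirm the right-hand side is a bi-submodule containing $m$. Since each $x_i \in \huaa{M}$, associativity gives $\O m_i = \O(r_i x_i) = \O x_i$, and since associative elements lie in the center by Proposition \ref{lem:Z(M) in huaa(M)}, each $\O x_i$ is automatically a bi-submodule (direct calculation with $(qx_i)p = q(x_i p) = q(p x_i) = (qp)x_i$). Directness will follow from Lemma \ref{lem: xishu asselm}: the $\R$-linear independence of the characteristic vectors $\{x_i\}$ promotes to $\O$-linear independence, so $\sum_i \O x_i = \bigoplus_i \O x_i$.

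The main task is the reverse inclusion, for which I plan to extract each $x_j$ from $m$ inside $\generat{m}_\O$ using the real-part machinery of Section 4. Since $\{r_i\}_{i=1}^n$ is $\R$-linearly independent in $\O\cong\R^8$ (forcing $n\le 8$), I extend it to a real basis of $\O$ and apply the Riesz representation theorem with respect to the real inner product $\fx{p}{q}_\R = \re(p\conjgt{q})$ to produce octonions $q_1,\dots,q_n$ satisfying $\re(r_i q_j) = \delta_{ij}$. Then right multiplication (valid in the bi-submodule $\generat{m}_\O$) gives
\[
m q_j = \sum_{i=1}^n (r_i x_i) q_j = \sum_{i=1}^n (r_i q_j)\, x_i,
\]
where the second equality uses that $x_i \in \huaa{M} = \hua{Z}{}{M}$ implies $(rx_i)p = (rp)x_i$. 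Applying the real-part operator should then yield
\[
\re(m q_j) = \sum_{i=1}^n \re(r_i q_j)\, x_i = x_j,
\]
which places $x_j \in \generat{m}_\O$ and hence $\O m_j = \O x_j \subseteq \generat{m}_\O$, completing the reverse inclusion.

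The main obstacle will be justifying the factorization $\re(y\, x_i) = (\re y)\, x_i$ for $y\in\O$ and $x_i\in\huaa{M}$, which is what lets the real-part operator pass through the $x_i$'s. For this I would use the formula $\re y = y + \frac{1}{48}\epsilon_{ijk}\, e_i [e_j,e_k,y]$ from Section 4, together with identity \eqref{eq:[p,q,r]m+p[q,r,m]=[pq,r,m]-[p,qr,m]+[p,q,rm]} specialized to $x_i\in\huaa{M}$, which yields $[e_j,e_k,y\, x_i] = [e_j,e_k,y]\, x_i$; a further application of the associativity of $x_i$ moves the outer $e_i$ past, giving the claimed factorization. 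A secondary point worth noting is that this same formula shows $\re$ is implementable purely by left multiplications and associators, so $\generat{m}_\O$ is automatically stable under $\re$; thus the only genuine use of the bimodule structure is in forming $m q_j$.
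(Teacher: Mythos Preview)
Your argument is correct and considerably shorter than the paper's. The paper proves the reverse inclusion by a six-step case analysis on $n$: after normalizing $r_1=1$ and $r_i\in\pureim\O$ for $i\ge 2$, it treats $n=3,4,5$ separately via associator calculations of the form $[r_i,r_j,m]=\sum_k[r_i,r_j,r_k]x_k\in\generat{m}_\O$, using the geometry of associative $3$-planes in $\pureim\O$ to peel off summands, and then handles $n>5$ by an inductive reduction. No use is made of Section~4.

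By contrast, you exploit the real-part machinery of Theorem~\ref{lem:real part on good bimod } directly: choosing a dual system $q_j$ with $\re(r_iq_j)=\delta_{ij}$ and observing that $(r_ix_i)q_j=(r_iq_j)x_i$ (since $x_i\in\huaa{M}=\hua{Z}{}{M}$ and the bimodule associator is alternating), one gets $\re(mq_j)=\sum_i\re(r_iq_j)x_i=x_j\in\generat{m}_\O$ in one line. The factorization $\re(yx_i)=(\re y)x_i$ that you flag as the ``main obstacle'' is already assertion~(iv) of Theorem~\ref{lem:real part on good bimod }, so no extra work is needed there. Your approach buys a dramatically shorter and more conceptual proof, and makes transparent why $n\le 8$; the paper's approach stays within the left-module language (associators only) and is logically independent of Section~4, which may have been the author's intent given the ordering of the sections.
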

\begin{rem}
	In particular, the length $n$ of a \decomp\ $\{m_i\}_{i=1}^n$ is  an invariant  of $m$, just called the  \textbf{length} of $m$, and denoted by $l_m$.
	It then follows that $\generat{m}_\O\cong\O^{l_m}$.
	Clearly the number of $\R$-linearly independent vectors in $\O$ is at most $8$, hence for any element $m$ of an $\O$-bimodule   $M$, we  infer that 
	$\dim_{\R}(\generat{m}_\O)\leqslant 64$. In terms of the concept of length, we have only $8$ kinds of elements in an $\O$-bimodule.
\end{rem}
\begin{proof}[Proof of Theorem \ref{thm:<m>=oplus Omi}]
We prove this in six steps. Throughout the proof, let $\{m_i\}_{i=1}^n$ be an arbitrary  \decomp\ of $m$  with correspoding  \tezh s $x_i$ and \tezzh s $r_i$ for $i=1,\ldots,n$. One direction is obvious, it remains to show that $\O m_i\subset \generat{m}_\O$ for each $i$.
Note that the theorem holds for $n=1$ trivially and has been proved  for $n=2$ in Lemma \ref{lem:m1+m2}.
\begin{step }
	We can assume $r_1=1$.
\end{step }
	
	If not, replacing $m$ with $r_1^{-1}m$ and letting $r_i'=r_1^{-1}r_i,\; x_i'=x_i, m_i'=r_i'x_i'$, then neither the hypothesis nor the conclusion is affected since $\generat{m}_\O=\generat{r_1^{-1}m}_\O$.
	
\begin{step }
	 We can assume $r_i\in \pureim{\O}$ for $i=2,\ldots,n$.
\end{step }
	
	In fact, let $r_i=r_{i0}+\sum r_{ij}e_j$, where $r_{ij}\in \R$ for $j=0,1,\ldots,7$. 
	Set $$m_1'=\sum _{i=1}^nr_{i0}x_i;\qquad m_i'=(r_i-r_{i0})x_i,\;r_i'= r_i-r_{i0},\ \; i=2,\ldots,n.$$
	Clearly $r_i'\in \pureim{\O}$ for $i=2,\ldots,n$. We next show that $\{m_i'\}_{i=1}^n$ be another \decomp\ of $m$.
	Note that we have assumed that  $r_1=1$ from Step 1, we hence conclude that $$\sum _{i=1}^nm_i'=\sum _{i=1}^nr_{i0}x_i+\sum _{i=2}^n(r_i-r_{i0})x_i=m.$$ 
	Let $\sum _{i=1}^n t_ir_i'=0$ for some $t_i\in \R$. That is, $$0=t_1+\sum_{i=2}^nt_i(r_i-r_{i0})=(t_1-\sum_{i=2}^nt_ir_{i0})r_1+\sum_{i=2}^nt_ir_i.$$
	It follows from the linear independence of $\{r_i\}_{i=1}^n$ that $t_i=0$ for $i=1,\ldots,n$. Hence  $\{r_i'\}_{i=1}^n$	 is $\R$-linearly independent. So $ \{m_i'\}_{i=1}^n$ is another \decomp\ of $m$ with $r_i\in \pureim{\O}$ for $i=2,\ldots,n$. Moreover, it is easy to verify that $$\bigoplus_{i=1}^n \O m_i=\bigoplus \O m_i'.$$
	This means it does not affect the conclusion as well.

\begin{step }
We prove the case $n=3$.
\end{step }

Let $\alpha$ be any element orthogonal to the associative subspace $$\Lambda(r_2,r_3)=\{x\in \pureim{\O}\mid [r_2,r_3,x]=0  \}.$$ Then $$[r_2,\alpha,m]=[r_2,\alpha,r_3]x_3\in \generat{m}_\O.$$ Since $[r_2,\alpha,r_3]\neq 0$,  we conclude that $\O x_3\subseteq\generat{m}_\O$ and hence  $r_3x_3\in \generat{m}_\O $,  this yields  $r_1x_1+r_2x_2\in \generat{m}_\O$ and it then follows from  Lemma \ref{lem:m1+m2}, i.e., the case $n=2$.

\begin{step }
We prove the case $n=4$.	
\end{step }
\begin{itemize}
	\item If $r_4\in \Lambda(r_2,r_3)$.
	
	According to $r_2,r_3,r_4\in \Lambda(r_2,r_3)$ and noting the dimension of associative subspaces are all $3$, we conclude $\Lambda(r_2,r_4)=\Lambda(r_2,r_3)=\Lambda(r_3,r_4)$.  
	Choose $\alpha\in \Lambda(r_2,r_3)^{\perp}$ again, then  $$[r_2,\alpha,m]=[r_2,\alpha,r_3]x_3+[r_2,\alpha,r_4]x_4\in \generat{m}_\O.$$
	Let $t_1[r_2,\alpha,r_3]+t_2[r_2,\alpha,r_4]=[r_2,\alpha,t_1r_3+t_2r_4]=0$. If $t_1r_3+t_2r_4\neq0 $, it follows from the linear independence of $\{r_i\}_{i=2}^4$, we know $\{r_2,t_1r_3+t_2r_4\}$ is $\R$-linearly independent and hence we conclude 
	$$\alpha\in \Lambda(r_2,t_1r_3+t_2r_4)=\Lambda(r_2,r_3).$$
	This is a contradiction and hence $t_1r_3+t_2r_4=0 $. This immediately implies $t_1=t_2=0$, which means $\{[r_2,\alpha,r_3],[r_2,\alpha,r_4]\}$  is $\R$-linearly independent. It then turns to the case $n=2$.
	\item  If $r_4\notin \Lambda(r_2,r_3)$.
	
	It follows that $[r_2,r_3,m]=[r_2,r_3,r_4]x_4\in \generat{m}_\O$ and then turns to the case $n=3$.
\end{itemize}

\begin{step }
	We prove the case $n=5$.
\end{step }

Obviously, it is impossible that both $r_4,r_5$ are in $\Lambda(r_2,r_3)$ since orthewise the dimension of $\Lambda(r_2,r_3)$ will exceed $3$.
\begin{itemize}
	\item If $r_4\in \Lambda(r_2,r_3)$. 
	
	We must have $r_5\notin  \Lambda(r_2,r_3)$, since $[r_2,r_3,m]=[r_2,r_3,r_5]x_5\in \generat{m}_\O,$ we deduce $r_5x_5\in \generat{m}_\O$. It then follows from the case $n=4$.
	\item  The case $r_5\in \Lambda(r_2,r_3)$ is similar. 
	\item If $r_4,r_5\notin \Lambda(r_2,r_3)$.
	
	We have $ [r_2,r_3,m]=[r_2,r_3,r_4]x_4+[r_2,r_3,r_5]x_5\in \generat{m}_\O$. Let $r_i'=[r_2,r_3,r_i]$ for $i=4,5$. If $\{r_4',r_5'\}$ is $\R$-linearly independent, then by the case of $n=2$, we conclude $\O x_4\oplus\O x_5\subseteq \generat{m}_\O$ and then by the case $n=3$, we deduce $\bigoplus_{i=1}^5\O x_i\subseteq \generat{m}_\O$. Suppose $\{r_4',r_5'\}$ is $\R$-linearly dependent. \bfs\ $r_5'=tr_4'$ for some $0\neq t\in \R$. Thus $r_4'(x_4+tx_5)\in \generat{m}_\O$, this implies $r_4(x_4+tx_5)\in \generat{m}_\O$. It follows that $$\sum _{i=1}^3r_ix_i+(r_5-tr_4)x_5\in \generat{m}_\O.$$
	Clearly $\{r_1,r_2,r_3,r_5-tr_4\}$ is also $\R$-linearly independent, hence this turns to the case $n=4$.

\end{itemize}
\begin{step }
	We prove for $n>5$.
\end{step }


Suppose there is an associative subspace spaned by $\{r_{i_k}\}_{k=1}^3$ for some $i_k\in \{2,\ldots,n\}$,
we simply assume $\Lambda=\text{Span}_\R(r_2,r_3,r_4)$ is an associative space. We claim that $\{[r_2,r_3,r_i]\}_{i=5}^n$ is $\R$-linearly independent. Indeed, since $\pureim{\O}=\Lambda\oplus\Lambda^\perp,$ we have , $$r_i=\alpha_i+\beta_i, \quad \alpha_i\in \Lambda,\;\beta_i\in \Lambda^\perp,$$ for each $i\in \{5,\ldots,n\}$. Suppose $\sum_{i=5}^nt_i[r_2,r_3,r_i]=0$ for some $t_i\in \R$, $i=5,\ldots,n$. Hence we obtain  $[r_2,r_3,\sum_{i=5}^nt_i\beta_i]=0$, which implies $\sum_{i=5}^nt_i\beta_i\in \Lambda\cap \Lambda^\perp$, and thus $\sum_{i=5}^nt_i\beta_i=\sum_{i=5}^nt_i(r_i-\alpha_i)=0$. Let $\alpha_i=t_{i2}r_2+t_{i3}r_3+t_{i4}r_4$, we conclude 
$$\sum_{i=5}^nt_ir_i-\sum_{i=5}^nt_i(t_{i2}r_2+t_{i3}r_3+t_{i4}r_4)=0. $$
In view of the linear independence of $\{r_i\}_{i=2}^n$, we deduce that $t_i=0$ for $i=5,\ldots,n$ and hence $\{[r_2,r_3,r_i]\}_{i=5}^n$ is $\R$-linearly independent as desired. By above claim, we can deduce $$\bigoplus_{i=5}^n\O x_i\subseteq\generat{m}_\O.$$ Then the rest of the proof runs as before.

Now suppose every subspace spaned by $\{r_{i_k}\}_{k=1}^3$ is not associative.  Denote $r_i'=[r_2,r_3,r_i]$ and hence $r_i'\neq 0$ for $i=4,\ldots,n$.   It follows that $\sum_{i=4}^nr_i'x_i\in \generat{m}_\O.$ 
\begin{itemize}
	\item If $\{r_i'\}_{i=4}^n$ is $\R$-linearly independent.
	
	It follows from the case $n-3$ that $\bigoplus_{i=4}^n\O x_i\subset \generat{m}_\O$, and hence $\sum_{i=4}^nr_ix_i\in \generat{m}_\O$ which implies $\sum_{i=1}^3r_ix_i\in \generat{m}_\O$. Therefore $\bigoplus_{i=1}^n\O x_i\subset \generat{m}_\O$.
	\item  If $\{r_i'\}_{i=4}^n$ is $\R$-linearly dependent.
	
	\bfs\ $$r_n'=\sum_{i=4}^{n-1}t_ir_i',\quad  t_i\in \R\text{ for each }i.$$
	Let $x_i'=x_i+t_ix_n$. Clearly $\{x_i'\}_{i=4}^{n-1}$ is $\R$-linearly independent and 
	 $$\sum_{i=4}^nr_i'x_i=\sum_{i=4}^{n-1}r_i'x_i+\sum_{i=4}^{n-1}t_ir_i'x_n=\sum_{i=4}^{n-1}r_i'x_i'\in \generat{m}_\O.$$
	 \begin{itemize}[label=$\star$]
	 	\item If $\{r_i'\}_{i=4}^{n-1}$ is $\R$-linearly independent.
	 	
	 	It follows from the case $ n-4$ that $\bigoplus_{i=4}^{n-1}\O x_i'\subset \generat{m}_\O$, and hence
	 	 $$\sum_{i=4}^{n-1}r_ix_i'=\sum_{i=4}^{n-1}r_ix_i+\left(\sum_{i=4}^{n-1}r_it_i\right)x_n\in \generat{m}_\O.$$
	 	 Then $$m-\sum_{i=4}^{n-1}r_ix_i'=\sum_{i=1}^{3}r_ix_i+\left(r_n-\sum_{i=4}^{n-1}r_it_i\right)x_n\in \generat{m}_\O.$$
	 	 It is easy to see that $\{r_1,r_2,r_3,r_n-\sum_{i=4}^{n-1}r_it_i \}$ is $\R$-linearly independent,  according to the case $n=4$, we infer	 	 that $r_n x_n\in \generat{m}_\O$ and then reduces to the case $n-1$.
	 	 \item If $\{r_i'\}_{i=4}^{n-1}$ is $\R$-linearly dependent.
	 	 
		\bfs\ $$r_{n-1}'=\sum_{i=4}^{n-2}s_ir_i',\quad  s_i\in \R\text{ for each }i.$$
	 	 Let $x_i''=x_i'+s_ix_{n-1}'$. Clearly $\{x_i''\}_{i=4}^{n-2}$ is $\R$-linearly independent and 
	 	 $$\sum_{i=4}^{n-1}r_i'x_i'=\sum_{i=4}^{n-2}r_i'x_i'+\sum_{i=4}^{n-2}s_ir_i'x_{n-1}'=\sum_{i=4}^{n-2}r_i'x_i''\in \generat{m}_\O.$$
	 	 \begin{itemize}[label=*]
	 	 	\item If $\{r_i'\}_{i=4}^{n-2}$ is $\R$-linearly independent.
	 	 	
	 	 		It follows from the case $n-5$ that $\bigoplus_{i=4}^{n-2}\O x_i''\subset \generat{m}_\O$, and hence
	 	 	$$\sum_{i=4}^{n-2}r_ix_i''
	 	 	=\sum_{i=4}^{n-2}r_ix_i+\left(\sum_{i=4}^{n-2}r_is_i\right)x_{n-1}+\left(\sum_{i=4}^{n-2}r_it_i+r_is_it_{n-1}\right)x_{n}\in \generat{m}_\O.$$
	 	 	Then $$m-\sum_{i=4}^{n-2}r_ix_i''=\sum_{i=1}^{3}r_ix_i+\left(r_{n-1}-\sum_{i=4}^{n-2}r_is_i\right)x_{n-1}+\left(r_n-\sum_{i=4}^{n-2}(r_it_i+r_is_it_{n-1})\right)x_n\in \generat{m}_\O.$$
	 	 	It is easy to see that $\{r_1,r_2,r_3,r_{n-1}-\sum_{i=4}^{n-2}r_is_i,r_n-\sum_{i=4}^{n-2}(r_it_i+r_is_it_{n-1}) \}$ is $\R$-linearly independent, it follows from the case $n=5$ that 
	 	 	$r_n x_n\in \generat{m}_\O$ and then reduces to the case $n-1$.
	 	 	Note that we have already proved the case $n=6$.
	 	 		\item If   $\{r_i'\}_{i=4}^{n-2}$ is $\R$-linearly dependent.
	 	 	
 	 	Apply the  argument similar to above twice, we can obtain an $\R$-linearly independent subset of  $\{r_i'\}_{i=4}^{n}$ since $r_i'\neq 0$ for each $i$. Thus it  can always   reduce to the preceding  case. 
	 	 \end{itemize}
	 \end{itemize}
\end{itemize}
This proves the theorem.
\end{proof}

\begin{eg}
	Let $M=\O^3$. Let $m=(e_1,e_2,e_1+e_2)$, then $m=e_1(1,0,1)+e_2(0,1,1)$ and clearly this is a \decomp\ of $m$, hence $\generat{m}_\O=\O (1,0,1)\oplus \O (0,1,1)$.
	Consider the example in \cite{goldstine1964hilbert}, let $m=(e_1,e_2,e_3)$, then $m=e_1(1,0,0)+e_2(0,1,0)+e_3(0,0,1)$ and  this is a \decomp\ of $m$, hence $\generat{m}_\O=\O ^3=M$.

\end{eg}

\begin{eg}
	
	Let $M=\O^3$ and consider the element $m=(1,1+e_1,e_1)$. Then choose $m_1=(1,0,0),\;m_2=(0,1+e_1,0),\;m_3=(0,0,e_1)$ in $\huac{M}$, it clearly holds $m=\sum_{i=1}^3 m_i$ and they are real linear independent. However, this is not a  \decomp\ in the sence of definition \ref{def:decomp}.  On the other hand, we can choose 
	$m_1'=(1,1,0), \;m_2'=e_1(0,1,1)$ in $\huac{M}$, one can verify that this is indeed a \decomp\ of $m$. Thus it holds
	$$\generat{(1,1+e_1,e_1)}_\O=\O\cdot (1,1,0)\oplus \O\cdot  e_1(0,1,1)\cong \O^2.$$
\end{eg}

%
Using Theorem \ref{thm:<m>=oplus Omi}, we  point out a mistake in \cite[Lemma 2.4.2]{ludkovsky2007algebras}, which claims each element in an $\O$-bimodule will satisfy $\O x=x\O$. We shall show that only cyclic elements posses such property. 
\begin{cor}
Let $m$ be an arbitrary element in an $\O$-bimodule $M$. Then we have  $$\O m=m\O\iff m\in \huac{M}.$$
\end{cor}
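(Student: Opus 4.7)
The plan is as follows. One direction is short: if $m \in \huac{M}$, then by Theorem \ref{lem:huac+(M)} (together with $\hua{A}{-}{M} = \{0\}$ in any bimodule, a consequence of Theorem \ref{thm:general bimod strc}), we can write $m = px$ for some $p \in \O$ and $x \in \huaa{M}$. Since $x$ is central and associative (Proposition \ref{lem:Z(M) in huaa(M)}), for any $q \in \O$ the alternating associator forces $(px)q = p(xq) = p(qx) = (pq)x$ and $q(px) = (qp)x$; hence both $m\O$ and $\O m$ equal $\O x$ (trivially if $p = 0$).

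For the converse, I would fix a \decomp\ $m = \sum_{i=1}^n r_i x_i$ and aim to show $n = 1$. Because each $x_i \in \huaa{M} = \hua{Z}{}{M}$, one has $qm = \sum_i (qr_i) x_i$ and $mq = \sum_i (r_i q) x_i$. The assumption $\O m = m\O$ then supplies, for each $q \in \O$, a single $q' \in \O$ with $qm = mq'$; the $\O$-linear independence of the associative set $\{x_i\}$ (Lemma \ref{lem: xishu asselm}) forces $q r_i = r_i q'$ for every $i$. Equivalently, $q' = r_i^{-1}(qr_i)$ (by alternativity) must be independent of $i$, so assuming $n \geq 2$ one obtains the conjugation identity
\[r_1^{-1}(q r_1) = r_2^{-1}(q r_2) \qquad (\forall q \in \O).\]

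The hard part will be extracting $\R$-proportionality of $r_1$ and $r_2$ from this identity, which would contradict the $\R$-linear independence required by Definition \ref{def:decomp}. I plan to do this in two steps. First, specializing $q = r_2$ in $qr_i = r_i q'$ gives $q' = r_2$ and then $r_2 r_1 = r_1 r_2$, placing $r_1, r_2$ in a common complex subalgebra $\spc_J$. Second, I would pick $q \in \pureim{\O}$ with $q \perp \spc_J$; the anti-commutation $qJ = -Jq$ then yields $qr = \overline{r}\,q$ for every $r \in \spc_J$, and combining this with $qr_i = r_i q'$ together with the vanishing of $[r_i^{-1}, \overline{r_i}, q]$ (since $r_i^{-1}$ is a real scalar multiple of $\overline{r_i}$, alternativity applies) gives $q' = (\overline{r_i}^{\,2}/|r_i|^2)\,q$. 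Because $\O$ is a division algebra, matching coefficients produces $\overline{r_1}^{\,2}/|r_1|^2 = \overline{r_2}^{\,2}/|r_2|^2$ in $\spc_J$, and squaring being two-to-one on the unit circle of $\spc_J$ forces $r_1/|r_1| = \pm\,r_2/|r_2|$, yielding the desired $\R$-proportionality and hence the contradiction. Therefore $n = 1$ and $m \in \huac{M}$.
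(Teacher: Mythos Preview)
Your argument is correct. The forward direction and the reduction to the system $qr_i = r_iq'$ (hence $r_i^{-1}(qr_i)$ independent of $i$) match the paper exactly, including the specialization $q=r_2$ to force $r_1r_2=r_2r_1$ and place $r_1,r_2$ in a common $\spc_J$. The divergence is only in the endgame. The paper, after observing that all the $r_i$ pairwise commute and hence lie in a single $\spc_J$ (so $n\le 2$), finishes by a change of basis: writing $r_1=a+bJ$, $r_2=c+dJ$ and passing to the new \decomp\ with characteristic values $1,J$, it lands back in the case ``some $r_i\in\R$'', which had already been dispatched. You instead pick a concrete $q\in\pureim{\O}$ orthogonal to $\spc_J$, use $qr=\overline{r}\,q$ and alternativity to compute $q'=(\overline{r_i}^{\,2}/|r_i|^2)\,q$ explicitly, and read off $\overline{r_1}/|r_1|=\pm\,\overline{r_2}/|r_2|$ from the division-algebra property. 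Your route is a bit more hands-on but avoids the auxiliary basis change and the separate treatment of the case $r_i\in\R$; the paper's route is shorter once that case split is in place. One cosmetic point: your conclusion should read $n\le 1$ rather than $n=1$, to cover $m=0$.
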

\begin{proof}
	Suppose $m\in \huac{M}$, then we have $m=px$ for some $p\in \O$ and some $ x\in \huaa{M}$. It is easy to check that $$\O m=\O x=x\O=m\O.$$
	
	 On the other hand, if it holds $\O m=m\O$,  suppose on  the contrary  that $m\notin \huac{M} $. This means the length $l_m>1$, for brief, write $l_m=n$. 	

	Let $\{m_i\}_{i=1}^n$ be a \decomp\ of $m$,
	the corresponding \tezh s $\{x_i\}_{i=1}^n$ and \tezzh s  $\{r_i\}_{i=1}^n$ are both $\R$-linearly independent.
	It follows from the hypothesis $\O m=m\O$ that,  for any $p\in \O$, there is an octonion $q\in \O$ such that $pm=mq$, note that $x_i\in \huaa{M} $, we have, 
	$$0=pm-mq=\sum_{i=1}^n p(r_ix_i)-\sum _{i=1}^n(r_ix_i)q=\sum _{i=1}^nx_i(pr_i-r_iq).$$
	Since $\{x_i\}_{i=1}^n$ is $\R$-linearly independent, it follows from  Lemma \ref{lem: xishu asselm} that 
	\begin{eqnarray}\label{proof:om=mo}
	pr_i-r_iq=0
	\end{eqnarray} 
	for each $i=1,\ldots,n$.
	Fix $p$ arbitrarily, then $r_i^{-1}pr_i=q$ is a constant for all $i\in \{1,\ldots,n\}$.
	If there exists $r_i\in \R$, it  follows that this constant is  the fixed  octonion $p$ and hence
	$$ r_j^{-1}pr_j=p, \quad\text{ for each } j\neq i.$$
	Since $p$ is arbitrarily fixed, we thus obtain $r_j\in \R$ for each $j=1,\ldots,n.$ This contradicts the fact that $\{r_i\}_{i=1}^n$ is  $\R$-linearly independent. 
	
	We  can assume $r_i\notin \R$ for each $i$. Suppose $r_i\in \spc_{J_i}\setminus \R$ for some $J_i\in \S$. Substituting $p=r_j$ in \eqref{proof:om=mo}  for $j=1,\dots,n$,   we   obtain $r_ir_j=r_jr_i$ for all $i,j\in \{1,\ldots,n\}$. 
	Hence $r_i\in \cap_j\spc_{J_j}\setminus \R$ for each $i\in \{1,\ldots,n\}$. 
	Consequently, there exists an imaginary unit $J\in \S$, such that $r_i\in \spc_J$ for every $i\in \{1,\ldots,n\}$. We conclude  immediately from the $\R$-linearly independence of    $\{r_i\}_{i=1}^n$  that the length $n$ is no more that $2$, it the follows from $n>1$ that  $n=2$. Suppose $r_1=a+bJ,\;r_2=c+dJ$, where $a,b,c,d\in \R$.  Let $r_1'=1,\;r_2'=J$ and $x_1'=ax_1+cx_2,\; x_2'=bx_1+dx_2$,  then $$m=r_1x_1+r_2x_2=r_1'x_1'+r_2'x_2'.$$
	The $\R$-linearly independence of    $\{x_i\}_{i=1}^2$ and $\{r_i\}_{i=1}^2$ yields the $\R$-linearly independence of    $\{x_i'\}_{i=1}^2$. It follows that $\{r_i'x_i'\}_{i=1}^2$ is  another \decomp\ of $m$ and satisfies $r_1'=1\in \R$, this contradicts the assumption above. We thus derive that $m\in \huac{M}$.
\end{proof}

\bibliographystyle{plain}

\bibliography{bimref}
\end{document}